\documentclass[10pt]{amsart}
\ProvidesClass{amsart}[2009/07/02 v2.20.1]
\usepackage{amsfonts,amsmath,amssymb,color,amscd,amsthm}
\usepackage[T1]{fontenc} 
\usepackage[francais,english]{babel} 
\usepackage[all]{xy}
\usepackage{tikz-cd}
\usepackage{pgf,tikz}
\usepackage{pgfplots}
\usepgfplotslibrary{patchplots}
\usepackage{soul}
\pgfplotsset{
        % use this `compat' level or higher to use `axis cs:' coordinates by
        % default for tikz commands
        compat=1.11,
        My Line Style/.style={
            smooth,
            thick,
            samples=400,
        },
    }
%\tikzset{My Line Style/.style={smooth, thick, samples=400}}

\usepackage{stmaryrd}

\usepackage{enumitem} 
%\begin{enumerate}[label=(a)] % (a), (b), (c), ...
%\item
%\end{enumerate}

%\begin{enumerate}[label=a)] % a), b), c), ... 
%\item
%\end{enumerate}

\usepackage[backref, colorlinks, linktocpage, citecolor = blue, linkcolor = blue]{hyperref} %permet d'afficher les r\'ef\'erences

\newcommand\iso{\stackrel{\simeq}{\longrightarrow}}

\newcommand\Jac{\mathrm{Jac}}
\newcommand\car{\mathrm{char}}
\newcommand\N{{\mathbb N}}

\newcommand\R{{\mathbb R}}
\newcommand\A{{\mathbb A}}
\newcommand\Z{{\mathbb Z}}
\newcommand\C{{\mathbb C}}

\newcommand\tr{\hbox to 1mm  {${}^t \!  $} }

\newcommand\p{{\mathbb P}}

\renewcommand\k{\mathbf{ \textbf k}}

\DeclareMathOperator{\Aut}{Aut}

\DeclareMathOperator{\Aff}{Aff}
\DeclareMathOperator{\GL}{GL}

\DeclareMathOperator{\SL}{SL}

\DeclareMathOperator{\Spec}{Spec}

\DeclareMathOperator{\Bl}{B\ell}

\DeclareMathOperator{\pr}{pr}
\DeclareMathOperator{\Gal}{Gal}

\newtheorem{theorem}{Theorem}
\newtheorem{lemma}{Lemma}[section]
\newtheorem{corollary}[lemma]{Corollary}
\newtheorem{proposition}[lemma]{Proposition}
\newtheorem{question}[lemma]{Question}

\newtheorem*{corollary*}{Corollary}
\newtheorem*{corollary**}{Corollary}

\theoremstyle{definition}
\newtheorem{definition}[lemma]{Definition}

\theoremstyle{remark}
\newtheorem{remark}[lemma]{Remark}
\newtheorem{example}[lemma]{Example}

%\newcommand{\note}[1]{{\color{red}#1}} % Comments from ?
%\newcommand{\ii}[1]{{\color{green}#1}} % Comments from immu
%\newcommand{\jj}[1]{{\color{cyan}#1}} % Comments from Jeremy

% Predefined colors: black, blue, brown, cyan, darkgray, gray, green, lightgray, lime, magenta, olive, orange, pink, purple, red, teal, violet, white, yellow.

% \textheight=27cm
% \textwidth=19 cm
% \voffset=-3cm
% \hoffset=-3cm
% \textheight=27cm
% \textwidth=19 cm
% \voffset=-3cm
% \hoffset=-3cm

\title{Embeddings of Affine Spaces into Quadrics} % This is the full title of the paper

\author{J\'er\'emy Blanc}
\address{J\'er\'emy Blanc, Universit\"{a}t Basel, Departement Mathematik und Informatik, Spiegelgasse $1$, CH-$4051$ Basel, Switzerland}
\email{jeremy.blanc@unibas.ch}

\author{Immanuel van Santen (n\'e Stampfli)}
\address{Immanuel van Santen,
Fachbereich Mathematik der Universit\"{a}t Hamburg,
Bundesstra{\ss}e $55$, DE-$20146$ Hamburg, Germany}
\email{immanuel.van.santen@math.ch}

\begin{document}

\subjclass[2010]{14R10, 14R25, 14J70, 14J50, 14E05}
\maketitle

\begin{abstract}

This article provides, over any field, infinitely many algebraic embeddings of the 
affine spaces $\mathbb{A}^1$ and $\mathbb{A}^2$ 
into smooth quadrics of dimension two and three respectively, which are pairwise non-equivalent under automorphisms of the smooth quadric.
Our main tools are the study 
of the birational morphism $\mathrm{SL}_2 \to \mathbb{A}^3$
and the fibration
$\mathrm{SL}_2 \to \mathbb{A}^3 \to \mathbb{A}^1$ obtained by projections, 
as well as degenerations of variables of polynomial rings, 
and families of $\mathbb{A}^1$-fibrations.
\end{abstract}

\tableofcontents

\section{Introduction} In the sequel we denote by $\k$ the ground field of our algebraic varieties.
Given two affine algebraic varieties $X,Y$, we say that two closed embeddings $\rho,\rho'\colon X\hookrightarrow Y$ are \emph{equivalent} if there exists an automorphism $\varphi\in \Aut(Y)$ such that $\rho'=\varphi \circ \rho$. Similarly, we say that two closed subvarieties $X,X'\subset Y$ are \emph{equivalent} if there exists an automorphism $\varphi\in \Aut(Y)$ such that $X'=\varphi(X)$. If two closed embeddings are equivalent, 
then their images are equivalent, but the converse is not always true and is related to the extension of automorphisms.

\medskip

In the Bourbaki Seminar {\it Challenging problems on affine $n$-space} 
\cite{KraftChallenging}, Hans\-peter Kraft gives a list of eight 
fundamental problems related to the affine $n$-spaces. The third one is the following:
\begin{center}{
 \textbf{Embedding Problem.} \it Is every closed embedding $\A^m_\k\hookrightarrow \A^n_\k$ equivalent to the standard embedding $(x_1,\dots,x_m)\mapsto (x_1,\dots,x_m,0,\dots,0)$?}\end{center}
This question, asked over the ground field $\k=\mathbb{C}$ in \cite{KraftChallenging}, has until now no negative answer. For $\k=\mathbb{R}$, it is easy to find counterexamples for $m=1$ and $n=3$, by taking embeddings which are not topologically trivial (non-trivial knots), 
see for instance the example of \cite{Shastri92}, reproduced below in 
Example~\ref{Ex:Shastri}. 
In positive characteristic, there are counterexamples when $m=n-1$ 
(see Proposition~\ref{Prop:EmbeddingsCharP}). The embedding problem has however a positive answer in the following cases:
\begin{enumerate}
\item
$m=1$, $n=2$, $\car(\k)=0$ (Abhyankar-Moh-Suzuki Theorem) \cite{AbhyankarMoh, Suzuki}, \cite[Theorem~2.3.5]{Essen2000}; 
\item
$n\ge 2m+2$, $\k$ infinite (Theorem of Kaliman, Nori and 
Srinivas \cite{Kaliman, Srinivas}).
\end{enumerate}

The case of hypersurfaces $(m=n-1)$ is of particular interest. 
In this case, the image is given by the zero set of an irreducible polynomial equation $f\in \k[\A^n]$. One necessary condition 
for an embedding to be equivalent to the standard embedding 
consists of asking that the other fibres of $f \colon \A_{\k}^n \to \A_{\k}^1$
are affine spaces. In fact, for any field $\k$ and any $n\ge 1$, there is no known example of a hypersurface $X\subset \A_{\k}^n$ isomorphic to $\A_{\k}^{n-1}$ and given by $f=0$, $f\in \k[\A^n]$ irreducible such that another fibre $f=\lambda$ is not isomorphic to $\A_{\k}^{n-1}$. It is conjectured by Abhyankar and
Sathaye that no such examples exist, at least when $\car(\k)=0$, 
see \cite[\S3, page 103]{Essen2000}, even if this is quite strong and seems ``unlikely'' (as Arno van den Essen says in \cite[\S3, page 103]{Essen2000}). Moreover, for $n=3$ and $\car(\k) = 0$, the fact that infinitely many 
fibres $f=\lambda$ are isomorphic to $\A^2_\k$ implies that the fibration is equivalent to the standard one, and in particular that all fibres are isomorphic to $\A^2_\k$ \cite{KalimanZaidenberg01,Kaliman02, DaigleKaliman09}. 
For $\car(\k)>0$, there is until now no known counterexample to the above conjecture, which is open even in dimension $n=2$ (and corresponds to a question of Abhyankar, see \cite[Question 1.1]{Ganong2011}).

\bigskip

In this paper, we replace the affine space at the target by some analogue varieties, namely affine smooth quadrics. This simplifies the question in such a way that one can actually give an answer. Moreover, it also gives some idea on what kind of behaviour one could expect in a general situation.

\bigskip

In dimension $n=2$, the most natural quadric is 
\[
	Q_2=\Spec(\k[x,y,z]/(xy-z(z+1))) \subset \A^3_\k.
\] 
In fact, if $\k$ is an algebraically closed field, then every smooth quadric hypersurface $Q\subset \A^3_\k$ is isomorphic to $\A^2_\k$, 
$(\A_{\k}^1 \setminus \{ 0 \}) \times \A^1_\k$ or $Q_2$, as one can see 
using the classification
of quadratic forms. As all embeddings of 
$\A^1_\k$ into $(\A_{\k}^1 \setminus \{ 0 \}) \times \A^1_\k$ are constant on the first factor,
they are all equivalent.
Over any field, the group of automorphisms of $Q_2$ is similar to the one of $\A^2_\k$, as it is an amalgamated product of two factors, corresponding to affine maps and triangular maps \cite[Theorem~5.4.5(7)(a)]{BlaDub}. This is also the case for the affine surface $\p^2_{\k} \setminus \Gamma$, where $\Gamma\subset \p_{\k}^2$ 
is any smooth conic having a $\k$-point (see for instance \cite[Theorem~2]{DecDub}). If $\car(\k)=0$, there is exactly one (respectively two) closed 
curve $C\subset \A^2_\k$ (respectively $C\subset \p_{\k}^2\setminus \Gamma$) isomorphic to $\A^1_\k$, up to automorphism of the surface. This follows from 
the Abhyankar-Moh-Suzuki Theorem for $\A^2_\k$ and from \cite{DecDub} for 
$\p_{\k}^2\setminus \Gamma$. In particular, all automorphisms of the corresponding curves extend to automorphisms of $\A^2_\k$ or $\p_{\k}^2\setminus \Gamma$.
Similarly, a complex toric affine surface admits only finitely many embeddings of 
$\A^1_\C$, up to equivalence \cite{ArzZai}. 
By contrast, we prove the following result:

\begin{theorem}\label{Thm:EmbQ2}
For each field $\k$, there is an infinite set of closed curves \[C_i\subset Q_2=\Spec(\k[x,y,z]/(xy-z(z+1))),\ i\in I,\]
which are pairwise non-equivalent up to automorphism of $Q_2$, such that each $C_i$ is isomorphic to $\A^1_\k$ and such that the identity is the only automorphism of $C$ that extends to an automorphism of $Q_2$.
Moreover, if $\k$ is uncountable, then one can choose the same for $I$.
\end{theorem}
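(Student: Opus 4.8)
The plan is to exhibit an explicit infinite family of closed embeddings $\A^1_\k\hookrightarrow Q_2$, to extract from the boundary geometry an $\Aut(Q_2)$-invariant that separates them, and finally to rule out nontrivial extendable automorphisms of each curve. For the construction I would use the projection $\pi\colon Q_2\to\A^1_\k$, $(x,y,z)\mapsto x$, which is an $\A^1$-fibration whose only degenerate fibre, over $x=0$, splits as two lines $L_0=\{x=z=0\}$ and $L_{-1}=\{x=0,\,z=-1\}$. For any polynomial $p\in\k[t]$ with $p(0)=0$, the divisibility $t\mid p(t)(p(t)+1)$ (valid since $\gcd(p,p+1)=1$) shows that
\[ C_p=\bigl\{\,(t,\;p(t)(p(t)+1)/t,\;p(t))\;:\;t\in\A^1_\k\,\bigr\}\subset Q_2 \]
is a closed curve isomorphic to $\A^1_\k$: the first coordinate recovers the parameter, so $t\mapsto(t,\dots)$ is a closed embedding. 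This yields a family indexed by $\deg p$ and by the coefficients of $p$.

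Second, I would pass to the smooth projective closure $\bar Q_2\cong\Proj^1_\k\times\Proj^1_\k$ inside $\Proj^3_\k$, whose boundary $B=\bar Q_2\setminus Q_2$ is the smooth $(1,1)$-curve $\{w=0,\,xy=z^2\}$. The two rulings meet $B$ transversally and restrict to the same isomorphism $B\cong\Proj^1_\k$, while the closures of the degenerate fibres single out two boundary points $P_1=[0:1:0:0]$ and $P_2=[1:0:0:0]$. The decisive point is that, in contrast with $\A^2_\k$ (where Abhyankar--Moh rectifies every line), the biregular part of $\Aut(Q_2)$ acts on $B$ preserving the pair $\{P_1,P_2\}$, hence through the small group $\G_m\rtimes\Z/2\Z$ rather than all of $\PGL_2$. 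Exploiting the amalgamated-product description of $\Aut(Q_2)$ \cite[Theorem~5.4.5(7)(a)]{BlaDub}, I would show that the contact data of $\bar C_p$ with $B$ at $P_1$ — the multiplicity sequence of the infinitely near points resolving $\bar C_p+B$, together with the $\G_m$-orbit of the leading coefficients — is an invariant of the class of $C_p$ up to $\Aut(Q_2)$. Since this data varies with $\deg p$ and with the coefficients of $p$, it takes infinitely many values, and uncountably many when $\k$ is uncountable, producing the desired infinite, respectively continuum-sized, set of pairwise non-equivalent curves.

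Finally, for the rigidity statement I would use that $\Aut(\A^1_\k)=\{t\mapsto at+b\mid a\in\k^\ast,\,b\in\k\}$ and that any $\varphi\in\Aut(Q_2)$ stabilizing $C_p$ must permute the finitely many marked points where $C_p$ meets the distinguished fibres $L_0,L_{-1}$ and the analogous fibres over $y=0$, that is, the value $t=0$ together with the roots of $p(t)(p(t)+1)$. Choosing $p$ so that this marked configuration admits no nontrivial affine symmetry then forces the induced affine map of $\A^1_\k$, and hence $\varphi|_{C_p}$, to be the identity.

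The hard part will be the boundary analysis in the second step: the triangular automorphisms in the amalgam act on $\bar Q_2$ by birational maps whose base points lie precisely on $B$, possibly at $P_1$, where the curves $C_p$ are most tangent, so one must track carefully how the multiplicity sequence at $P_1$ transforms and prove that it is not collapsed. This is exactly the phenomenon that makes $Q_2$ behave differently from $\A^2_\k$, and controlling it—while simultaneously retaining enough freedom in $p$ to guarantee the rigidity of the third step—is the crux of the argument.
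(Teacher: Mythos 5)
Your first step --- the explicit family of closed embeddings $\A^1_\k\hookrightarrow Q_2$ obtained by choosing $z=p(t)$ and solving for the remaining coordinate --- is sound and is essentially the same family the paper uses (there, $\nu_p(t)=(t(1+tp(t)),p(t),tp(t))$, shown to be a closed embedding via the projective model $\p^1_\k\times\p^1_\k$). The problem is that the two steps carrying all the weight are not actually proved, and the claims you lean on are not correct as stated.

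First, the subgroup of $\Aut(Q_2)$ consisting of automorphisms that extend to $\bar Q_2\cong\p^1_\k\times\p^1_\k$ is exactly the group of automorphisms of $\p^1_\k\times\p^1_\k$ preserving $\Delta=B$, namely $\PGL_2(\k)$ acting diagonally, extended by the swap of factors; this acts on $B\cong\p^1_\k$ as the \emph{full} $\PGL_2(\k)$, not through $\G_m\rtimes\Z/2\Z$. The pair $\{P_1,P_2\}$ you single out is an artefact of the non-canonical choice of the fibration $(x,y,z)\mapsto x$ and is not preserved. More importantly, the assertion that the contact data of $\bar C_p$ with $B$ is an $\Aut(Q_2)$-invariant is precisely the crux, and you explicitly defer it (``the hard part will be the boundary analysis''); tracking how multiplicity sequences at boundary points transform under the triangular generators of the amalgam is exactly where such an argument can fail, so nothing is established. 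The paper sidesteps all of this: it proves directly that any $\alpha\in\Aut(Q_2)$ sending $C_p$ to $C_q$ with $\deg p,\deg q\ge 3$ extends to an automorphism of $\p^1_\k\times\p^1_\k$, by looking at the minimal resolution of the induced birational self-map: if it did not extend, the strict transform of $\Delta$ would be the unique contracted $(-1)$-curve, only three points on $\Delta$ get blown up (since $\Delta^2=2$), while $\bar C_p\cdot\Delta=2+\deg p\ge 5$ forces $\tilde C_p\cdot\tilde\Delta\ge 2$ and hence a singular image of $C_q$ --- a contradiction. Once extended, the automorphism preserves $\Delta$, the bidegree, and the unique point $\bar C_p\cap\Delta$, and can be written down explicitly, giving the relation $p(t)=\lambda q(\lambda t+\mu)$ from which both non-equivalence and rigidity follow by choosing $p$ with no affine self-symmetry.

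Second, your rigidity step assumes that an automorphism of $Q_2$ stabilizing $C_p$ permutes the points where $C_p$ meets the degenerate fibres $L_0$, $L_{-1}$ and their analogues over $y=0$. A general automorphism of $Q_2$ preserves neither the fibration $\pi$ nor the set of rulings (the triangular automorphisms send rulings to curves of higher bidegree), so this finite marked set is not a priori invariant; this step has the same gap as the second one and would again require first proving that the relevant automorphisms extend to the projective completion.
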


\medskip

In dimension $n=3$,  the most natural quadric is
\[
	\SL_2=\Spec(\k[t,u,x,y] / (xy-tu-1))\subset \A_{\k}^4 \, .
\]
Similarly as in dimension two, over an algebraically closed field $\k$,
every quadric hyper\-surface in $\A^4_{\k}$
is isomorphic to $\A^3_{\k}$, $(\A_{\k}^1 \setminus \{ 0 \}) \times \A^2_{\k}$, 
$Q_2 \times \A^1_{\k}$ or $\SL_2$. Moreover, the quotient of $\SL_2$ by its maximal torus yields a morphism $\SL_2\to\SL_2/T\simeq Q_2$, which is the ``universal torsor'' (also called the Cox quotient presentation or the characteristic space), see \cite[Examples 4.5.13--4.5.14]{ArDHL}.

We consider the quadric hypersurface $\SL_2$ more closely.
Its automorphism group shares similar properties with the one of $\Aut(\A^3_\k)$  (see \cite{LamyVenereau, BisiFurterLamy, Martin}). Both are known to be complicate, as they contain ``wild'' automorphisms \cite{LamyVenereau}, and do not preserve any fibration, as it is the case for other varieties being topologically closer to $\A^3_\k$, like the Koras-Russell threefold. However, in contrast to the quadric
$Q_2$, the quadric $\SL_2$ is closer to a contractible affine variety
in the sense that the ring of regular functions on $\SL_2$ 
is a unique factorisation domain (see Lemma~\ref{lem:R_is_UFD}). 
The first difference concerning embeddings
of affine spaces with the surfaces $Q_2, \A^2_\k, \p^2_\k\setminus \Gamma$ and with $\A^3_\k$ is that the ``simplest embedding'' $\A^2_\k\hookrightarrow \SL_2$ is more rigid in the following sense:

\begin{theorem}\label{Thm:StdEmbSL2}
Let $\k$ be any field and let \[
\begin{array}{cccc}
\rho_1\colon&\A^2_\k&\hookrightarrow& \SL_2\\
&(s,t)&\mapsto &\begin{pmatrix}
				  1 & t \\
				  s & 1+st
			     \end{pmatrix}\end{array}\] be the ``standard'' embedding. Then, an automorphism $(s,t)\mapsto (f(s,t),g(s,t))$ of $\A^2_\k$ extends to an automorphism of $\SL_2$, via $\rho_1$, if and only if it has Jacobian determinant $\frac{\partial f}{\partial s}\frac{\partial g}{ \partial t}-\frac{\partial f}{\partial t}\frac{\partial g}{\partial s}\in \k^*$ 
equal to $\pm 1$. In particular, the following holds:
\begin{enumerate}[label=$(\arabic*)$, ref=\arabic*]
	\item \label{Thm:StdEmbSL2ass1} every embedding 
	$\A_{\k}^2 \hookrightarrow \SL_2$ with image
		$\rho_1(\A^2_{\k})$ is equivalent to an embedding
		\[
			\begin{array}{cccc}
			\rho_\lambda \colon &\A^2_\k&\hookrightarrow& \SL_2\\
			&(s,t)&\mapsto &\begin{pmatrix}
				  1 & t \\
				  \lambda s & 1+\lambda st
			\end{pmatrix},\end{array}
		\]
		for a certain $\lambda \in \k^\ast$. Moreover, 
		$\rho_{\lambda}$ and $\rho_{\lambda'}$ are equivalent if
		and only if $\lambda' = \pm \lambda$;
	\item \label{Thm:StdEmbSL2ass2}
	if $\k$ has at least $4$ elements, 
		then not all automorphisms of $\A^2_\k$ extend to $\SL_2$ via $\rho_1$.
\end{enumerate}
\end{theorem}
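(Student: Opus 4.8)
The plan is to read the statement geometrically. In the coordinates $\SL_2=\Spec(\k[x,y,t,u]/(xy-tu-1))$ the image $\rho_1(\A^2_\k)$ is exactly the hypersurface $H=\{x=1\}$, and $\rho_1$ identifies $H$ with $\A^2_\k$ via $s=u$ and $y=1+st$; note also that $\rho_1(s,t)=\bigl(\begin{smallmatrix}1&0\\s&1\end{smallmatrix}\bigr)\bigl(\begin{smallmatrix}1&t\\0&1\end{smallmatrix}\bigr)$. Saying that $\alpha=(f,g)\in\Aut(\A^2_\k)$ \emph{extends} means producing $\varphi\in\Aut(\SL_2)$ with $\varphi(H)=H$ and $\varphi|_H=\alpha$ through $\rho_1$; the set $G\le\Aut(\A^2_\k)$ of such $\alpha$ is visibly a subgroup.

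For the ``if'' direction I would build the extensions by hand. Right multiplication $M\mapsto M\bigl(\begin{smallmatrix}1&p(u)\\0&1\end{smallmatrix}\bigr)$ and left multiplication $M\mapsto\bigl(\begin{smallmatrix}1&0\\q(t)&1\end{smallmatrix}\bigr)M$ both fix the entry $x$ (hence preserve $H$) and are automorphisms of $\SL_2$, their inverses replacing $p,q$ by $-p,-q$; on $H$ they induce the shears $(s,t)\mapsto(s,t+p(s))$ and $(s,t)\mapsto(s+q(t),t)$. These two families of shears generate $\SAut(\A^2_\k)$ (they give all translations, all of $\SL_2(\k)$ through the linear shears, and all Jacobian-one de Jonquières maps, whence the claim by Jung--van der Kulk, which is valid over any field), so $\SAut(\A^2_\k)\le G$. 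Finally the transpose $M\mapsto {}^{t}M$ preserves $H$ and induces $(s,t)\mapsto(t,s)$, an element of $G$ of Jacobian $-1$; therefore $G$ contains the whole group of automorphisms of Jacobian $\pm1$.

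The heart of the theorem is the converse, and the first step is a rigidity property of the fibration $x\colon\SL_2\to\A^1_\k$: every fibre $\{x=a\}$ with $a\ne0$ is isomorphic to $\A^2_\k$, whereas $\{x=0\}\cong\G_m\times\A^1_\k$ is the unique fibre carrying a nonconstant unit. If $\varphi\in\Aut(\SL_2)$ preserves $H$ then, since $x-1$ is prime in the (UFD) coordinate ring, $\varphi^*(x-1)=d\,(x-1)$ for some $d\in\k^\ast$, so $\varphi$ permutes the fibres of $x$ by the affine map $a\mapsto da+1-d$ fixing $a=1$; as $\varphi$ must send the distinguished fibre over $0$ to itself, we get $d=1$, that is $\varphi^*x=x$. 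Hence $\varphi$ is a $\k[x^{\pm1}]$-automorphism of $R_x=\k[x^{\pm1}][t,u]$ (with $y=(1+tu)/x$), and for the invariant volume form $\omega=\frac{dx}{x}\wedge dt\wedge du$ one computes $\varphi^*\omega=c\,\omega$ with $c=\frac{\partial(\varphi^*t,\varphi^*u)}{\partial(t,u)}$; since $\varphi^*\omega/\omega\in\mathcal{O}(\SL_2)^\ast=\k^\ast$, the factor $c$ is a constant, and restricting to $H$ gives $c=\frac{\partial f}{\partial s}\frac{\partial g}{\partial t}-\frac{\partial f}{\partial t}\frac{\partial g}{\partial s}=\Jac(\alpha)$.

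It remains to prove $c=\pm1$, and this I expect to be the main obstacle. The difficulty is that a priori $c$ is an arbitrary unit, and the fibres over $a\ne0$ only reproduce $\Jac(\alpha)=c$ without bounding it; the constraint has to be extracted at the degenerate fibre $\Sigma=\{x=0\}\cong\G_m\times\A^1_\k$. There $\varphi$ restricts to $\psi\in\Aut(\G_m\times\A^1_\k)$, the residue of $\omega$ along $\Sigma$ equals (up to sign) $\frac{dt}{t}\wedge dy$, and $\psi$ multiplies it by the same constant $c$; writing $\psi^*t=\gamma t^{\pm1}$ one gets $c=\pm\delta$, where $\delta\in\k^\ast$ is the scaling of the $\A^1_\k$-factor, so everything reduces to showing $\delta=\pm1$. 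I would do this by a degeneration-of-variables analysis: the requirement that $\varphi^*t,\varphi^*u$ be regular at $x=0$ and reduce modulo $x$ to units of $\mathcal{O}(\Sigma)=\k[t^{\pm1},y]$ (forced since $\psi^*t$ must be a unit and $\psi^*t\cdot\psi^*u=-1$) is very restrictive—already the naive attempt $(\varphi^*t,\varphi^*u)=(c\,t,u)$ fails because $(1+c\,tu)/x\notin\mathcal{O}(\SL_2)$ when $c\ne1$—and I expect it to pin $c$ down to $\pm1$, the value $-1$ being realised by the transpose. Granting $c=\Jac(\alpha)=\pm1$, the assertions follow formally: an embedding with image $H$ is $\rho_1\circ\beta$ and is equivalent to $\rho_{\Jac(\beta)}$, with $\rho_\lambda\sim\rho_{\lambda'}$ precisely when some automorphism extends $\beta_{\lambda'}\beta_\lambda^{-1}$, i.e. when $\lambda'=\pm\lambda$; and if $|\k|\ge4$ there is $\mu\in\k^\ast\setminus\{\pm1\}$, so $(s,t)\mapsto(\mu s,t)$ is an automorphism of $\A^2_\k$ that does not extend.
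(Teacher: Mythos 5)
Your setup is sound and in places genuinely different from the paper's: the ``if'' direction via left/right unipotent multiplications is essentially the paper's argument in Corollary~\ref{Corollary:Embnu}; the reduction $\varphi^*x=x$ via the UFD property and the uniqueness of the non-$\A^2$ fibre matches Proposition~\ref{AutoSL2fixingX}; and your use of the canonical volume form $\omega=\frac{dx\wedge dt\wedge du}{x}$ to identify $\Jac(\alpha)$ with the global constant $c=\varphi^*\omega/\omega$ is an elegant alternative to the paper's descent to $\A^3_\k$. (Minor point: $\omega$ equals $dx\wedge\frac{dt}{t}\wedge dy$ near $\Sigma=\{x=0\}$, so it is regular there and has no residue; what you want is to write $\omega=dx\wedge\eta$ and restrict $\eta$ to $\Sigma$, which does give $\psi^*(\frac{dt}{t}\wedge dy)=c\,\frac{dt}{t}\wedge dy$.) The formal deductions of assertions (1) and (2) from the Jacobian criterion are the same as in the paper and are fine.

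However, there is a genuine gap exactly at the decisive step: you reduce everything to showing that the automorphism $\psi$ induced on $\Sigma\cong\G_m\times\A^1_\k$ scales the $\A^1$-factor by $\pm1$ (equivalently $c=\pm1$), and then write ``I expect it to pin $c$ down to $\pm 1$''. This is not a proof, and it is precisely the hard core of the theorem. A priori $\varphi^*t$ and $\varphi^*u$ are only elements of $\k[x^{\pm1}][t,u]$ generating that ring over $\k[x^{\pm1}]$, together with the regularity condition at $x=0$ and the divisibility $x\mid 1+\varphi^*t\cdot\varphi^*u$; controlling how such a pair degenerates at $x=0$ is exactly the content of the paper's Lemma~\ref{Lemm:DegVariables} (degenerations of variables of $\k(t)[x,y]$, proved by induction via Jung--van der Kulk), which feeds into Lemma~\ref{lem.Key} and Corollary~\ref{Cor:ExtensionAutomorphisms} to show that $\varphi$ descends to an automorphism of $\A^3_\k$ preserving the centre of the affine modification; only then does the classification of automorphisms of $\A^2_\k$ preserving the conic $xy=1$ (the input from \cite{BlaSta} used in Corollary~\ref{Corollary:Embnu}) force the Jacobian to be $\pm1$. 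Observing that one naive candidate $(\varphi^*t,\varphi^*u)=(ct,u)$ fails does not rule out the infinitely many other candidates, so as written the converse direction is unproved. To complete your argument you would need either to reprove the degeneration-of-variables statement in your coordinates, or to import Lemmas~\ref{Lemm:DegVariables} and~\ref{lem.Key} explicitly.
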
 
\begin{remark}
\label{Rem:HolSmall}
Let us make some comments on 
Theorem~\ref{Thm:StdEmbSL2}:
\begin{enumerate}\item
Over the field of complex numbers $\k = \C$,
we show that all algebraic automorphisms of $\A_{\k}^2$ extend
via the standard embedding $\rho_1$ to holomorphic automorphisms of $\SL_2$,
see Remark~\ref{Rem:HolomorphicA2inSL2}.
\item
If all component functions of a closed
embedding $f \colon \A_{\k}^2 \hookrightarrow \SL_2$ 
are polynomials of degree $\leq 2$, then $f$ is equivalent to $\rho_\lambda$
for a certain $\lambda \in \k^\ast$ (Proposition~\ref{Prop:Low_degree}).
\end{enumerate}\end{remark}

Next, we focus on the closed embeddings $\A^2_\k \hookrightarrow \SL_2$ 
that are compatible with the simplest $\A^2$-fibration of $\SL_2$. More precisely:

\begin{definition}
A closed embedding $\rho\colon \A^2_\k\hookrightarrow \SL_2$ is said to be a \emph{fibred embedding} if it is of the form
 \[\label{Eq:FibredEmb}
	\tag{$\diamondsuit$}
\begin{array}{cccc}
\rho\colon&\A^2_\k&\hookrightarrow& \SL_2\\
&(s,t)&\mapsto &\begin{pmatrix}
				  p(s,t) & t \\
				  r(s,t) & q(s,t)
			     \end{pmatrix}\end{array}\]
			     for some $p,q,r\in \k[s,t]$. This corresponds to the commutativity of the diagram \[\xymatrix@R=4mm@C=2cm{\A^2_\k\ar[rd]_{\pi_1}\ar@{^{(}->}^{\rho}[rr] &&\SL_2\ar[ld]^{\pi_2} \\
			     & \A^1_\k,}\]
			     where $\pi_1\colon \A^2_\k\to \A^1_\k$, $\pi_2\colon \SL_2\to \A^1_\k$ are respectively given by $(s,t)\mapsto t$ and $\begin{pmatrix}
				  x & t \\
				  u & y
			     \end{pmatrix}\mapsto t$.
			     \end{definition}
			     
As we will show, there are a lot of fibred embeddings (i.e.~embeddings of the 
form~\eqref{Eq:FibredEmb}):

\begin{theorem} 
\label{thm:Fibered_Embeddings}
Let $\k$ be any field, let $P\in \k[t,x,y]$ be a polynomial that is a variable of the $\k(t)$-algebra $\k(t)[x,y]$ $($which means that $P$ is the image of $x$ by some automorphism of the 
$\k(t)$-algebra $\k(t)[x,y]$$)$, and let $H_P\subset \SL_2=\Spec(\k[t,u,x,y] / (xy-tu-1))$ and $Z_P\subset \A^3_\k=\Spec(\k[t,x,y])$ be the hypersurfaces given by $P=0$.
\begin{enumerate}[label=$(\arabic*)$, ref=\arabic*]
\item\label{ThmFibered1}
The following conditions are equivalent:
\begin{enumerate}[label=$(\alph*)$, ref=\alph*]
\item\label{ThmEqa}
The hypersurface $H_P\subset \SL_2$ is isomorphic to $\A^2_\k$.
\item\label{ThmEqb}
The hypersurface $H_P\subset \SL_2$ is the image of a fibred embedding $\A^2_\k\hookrightarrow \SL_2$.
\item\label{ThmEqc}
The fibre
of $Z_P \to \A^1_{\k}$, $(t, x, y) \mapsto t$ over every closed point 
of $\A_{\k}^1\setminus \{0\}$
is isomorphic to $\A^1$ and the 
polynomial $P(0,x,y)\in \k[x,y]$ is of the form  $\mu x^m(x-\lambda)$ or 
			     $\mu y^m(y-\lambda)$  
			     for some $\mu,\lambda\in \k^*$  
			     and some $m\ge 0$. 
			
\end{enumerate}
			     \item\label{ThmFibered2}
If $P,Q\in \k[t,x,y]$ are two polynomials of the above form satisfying the conditions $\eqref{ThmEqa}-\eqref{ThmEqb}-\eqref{ThmEqc}$, such that $H_P,H_Q\subset \SL_2$ are equivalent under an automorphism of $\SL_2$, then $Z_P,Z_Q\subset \A^3_\k$ are equivalent under an automorphism of $\A^3_\k$.
\item\label{ThmFibered3}
There are infinitely many fibred embeddings $\A^2_\k\hookrightarrow \SL_2$ 
having pairwise non-equivalent images in $\SL_2$. If $\k$ is uncountable, we can moreover choose uncountably many such embeddings.
\end{enumerate}
\end{theorem}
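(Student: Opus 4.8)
The plan is to prove Theorem~\ref{thm:Fibered_Embeddings} by reducing everything to the study of the birational morphism $\SL_2 \to \A^3_\k$ obtained by projecting away $u$, so that the hypersurface $H_P \subset \SL_2$ can be understood through the companion hypersurface $Z_P \subset \A^3_\k$. The key observation is that on $\SL_2$ the coordinate $u$ is determined by $u = (xy-1)/t$ on the open locus $t \neq 0$, so the projection $(t,u,x,y) \mapsto (t,x,y)$ restricts to an isomorphism between the open sets where $t \neq 0$ and induces a correspondence between $H_P$ and $Z_P$ that is an isomorphism over $\A^1_\k \setminus \{0\}$. Thus the three assertions each compare properties of $H_P$ with properties of $Z_P$ read off from this birational comparison.

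For part~\eqref{ThmFibered1}, I would prove the cycle of implications. The implication \eqref{ThmEqb}$\Rightarrow$\eqref{ThmEqa} is essentially tautological: the image of a fibred embedding is by definition isomorphic to $\A^2_\k$. For \eqref{ThmEqa}$\Rightarrow$\eqref{ThmEqc}, I would analyse the fibration $\pi_2|_{H_P}\colon H_P \to \A^1_\k$, which over $\A^1_\k \setminus \{0\}$ agrees with the fibration $Z_P \to \A^1_\k$; since $P$ is a variable of $\k(t)[x,y]$, the generic fibre is $\A^1_{\k(t)}$, and I would use the assumption $H_P \cong \A^2_\k$ together with a generic-triviality argument to force all fibres over $\A^1_\k\setminus\{0\}$ to be $\A^1$. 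The shape of $P(0,x,y)$ comes from analysing the special fibre at $t=0$: the reduced structure of $H_P$ over $t=0$ must fit together with the $\A^2_\k$-structure, and the classification of curves arising as the zero set of $P(0,x,y)$ that are compatible with this forces the form $\mu x^m(x-\lambda)$ or $\mu y^m(y-\lambda)$. The implication \eqref{ThmEqc}$\Rightarrow$\eqref{ThmEqb} is the heart of the construction: given the fibre conditions, I would build an explicit automorphism of $\SL_2$ (or directly a fibred parametrisation) realising $H_P$ as a fibred embedding, using the degeneration-of-variables and family-of-$\A^1$-fibration techniques advertised in the abstract.

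For part~\eqref{ThmFibered2}, the strategy is that any automorphism $\varphi$ of $\SL_2$ carrying $H_P$ to $H_Q$ should be transported, via the projection $\SL_2 \to \A^3_\k$, to a birational self-map of $\A^3_\k$ carrying $Z_P$ to $Z_Q$; the main work is to show that this birational map can be upgraded to a genuine automorphism of $\A^3_\k$. I would do this by controlling the behaviour along the exceptional locus of the projection (concentrated over $t=0$) and using the compatibility of both embeddings with the $t$-fibration, so that the induced map extends regularly across the locus where the two charts differ. Finally, part~\eqref{ThmFibered3} follows by producing an explicit infinite family of variables $P$ satisfying~\eqref{ThmEqc} whose associated hypersurfaces $Z_P \subset \A^3_\k$ are pairwise non-equivalent under $\Aut(\A^3_\k)$, and then invoking the contrapositive of~\eqref{ThmFibered2} to deduce that the corresponding $H_P \subset \SL_2$ are pairwise non-equivalent; the uncountable refinement comes from parametrising this family by a variable ranging over $\k$ itself.

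I expect the main obstacle to be the implication \eqref{ThmEqc}$\Rightarrow$\eqref{ThmEqb} together with the extension step in part~\eqref{ThmFibered2}. The difficulty in both is the same: the projection $\SL_2 \to \A^3_\k$ is only birational, so passing between the two sides requires careful control over the fibre at $t=0$, where the special form of $P(0,x,y)$ enters decisively. Establishing that the correct fibred parametrisation exists (rather than merely an abstract isomorphism $H_P \cong \A^2_\k$), and that birational maps respecting the fibration genuinely extend to automorphisms of $\A^3_\k$, is where the combination of degeneration arguments and explicit $\A^1$-fibration families must be deployed most delicately.
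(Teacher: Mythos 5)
Your overall architecture matches the paper's: the equivalences in part~(\ref{ThmFibered1}) are proved by comparing $H_P$ and $Z_P$ through the birational morphism $\eta\colon \SL_2\to\A^3_\k$, and part~(\ref{ThmFibered3}) is obtained from an explicit family of pairwise non-equivalent $Z_P$'s together with the contrapositive of part~(\ref{ThmFibered2}) (the paper uses $P_r=ty-(x-t)(x-1-t^2r(t))$ and the Dubouloz--Poloni classification of Danielewski surfaces to distinguish the $Z_{P_r}$). Two of your steps are under-specified but locate the right difficulty: the shape of $P(0,x,y)$ in \eqref{ThmEqa}$\Rightarrow$\eqref{ThmEqc} comes from a B\'ezout argument showing that the variable $w$ with $P(0,x,y)\in\k[w]$ must cut the conic $xy=1$ in a single reduced point, forcing $w=x-\lambda$ or $w=y-\lambda^{-1}$; and the passage from ``all closed fibres are $\A^1$'' to ``trivial $\A^1$-bundle'' in \eqref{ThmEqc}$\Rightarrow$\eqref{ThmEqa} is not an explicit construction but an appeal to Asanuma and Bass--Connell--Wright (flat $\A^1$-fibrations over a PID with generic fibre $\A^1_{\k(t)}$ are trivial bundles).

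The genuine gap is in part~(\ref{ThmFibered2}). You propose to transport $\varphi\in\Aut(\SL_2)$ to a birational self-map of $\A^3_\k$ and extend it ``using the compatibility of both embeddings with the $t$-fibration,'' but nothing forces an arbitrary automorphism of $\SL_2$ to respect the $t$-fibration: as the paper itself notes, $\Aut(\SL_2)$ contains wild automorphisms and preserves no fibration globally, and the conjugate $\eta\varphi\eta^{-1}$ is a priori only birational. The crux of the paper's argument (Proposition~\ref{prop:SL2eqthenA3eq}) is to \emph{prove} that $\varphi^*(t)=\mu t$ whenever $Z_P\not\simeq\A^2_\k$. This is done by passing to the level surfaces $H_{P-a}$ for general $a$, compactifying them so that the boundary dual graph is not a chain, and invoking the uniqueness of $\A^1$-fibrations on such surfaces (Bertin/Gizatullin) to get $\varphi^*(t)\equiv\mu_a t \pmod{P-a}$, then letting $a$ vary to conclude $\varphi^*(t)=\mu t$; only after that does the lifting machinery (Proposition~\ref{AutoSL2fixingX}, built on the affine-modification analysis of Lemma~\ref{lem.Key}) show that $\eta\varphi\eta^{-1}$ is a genuine automorphism of $\A^3_\k$. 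The complementary case $Z_P\simeq\A^2_\k$ must be treated separately (Lemma~\ref{Lem:M0}), since there $\varphi$ need not preserve the fibration at all. Without the fibration-rigidity step your extension argument does not get off the ground, and the statement is genuinely delicate: the converse implication fails (Lemma~\ref{Lemm:NonEquivZ} gives equivalent $Z_P,Z_Q$ with non-equivalent $H_P,H_Q$), so no soft birational-transport argument can work in both directions.
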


\begin{remark}Let us make some comments on 
Theorem~\ref{thm:Fibered_Embeddings}:
\begin{enumerate}
\item
It is possible that $H_P, H_Q$ are non-equivalent, 
even if $Z_P,Z_Q$ are equivalent (Lemma~\ref{Lemm:NonEquivZ}).
\item
If $\car(\k)=0$, then every image of a fibred embedding $\A^2_\k\hookrightarrow \SL_2$ is of the form $H_P$ as above (Lemma~\ref{Lemm:Polynomialp}\eqref{DegenerateFibre}). 
This is false if $\car(\k)>0$ (Lemma~\ref{Lemm:ExampleNotVarkt}).
\end{enumerate}
\end{remark}

Let us make the following comment concerning
embeddings of $\A^1_{\k}$ into the smooth quadric $\SL_2$
over the field $\k = \C$. 
Although there are infinitely many
non-equivalent embeddings of $\A^2_{\C}$ into $\SL_2$,
it is not known, whether all embeddings of $\A^1_{\C}$ into $\SL_2$
are equivalent under an algebraic automorphism. It seems
that this questions is as difficult, as the question of
non-equivalent embeddings 
$\A^1_{\C} \hookrightarrow \A^3_{\C}$. However, up to holomorphic
automorphisms, all embeddings of $\A^1_{\C}$ into $\A^3_{\C}$
and into $\SL_2$ are equivalent, see \cite{Kaliman92, Stampfli15}. 

In the last section (Lemma~\ref{Exam:RSL2R}), we give an example of 
an embedding $\A^1_{\R} \hookrightarrow \SL_2$ which is
non-equivalent to the standard embedding.

\subsection*{Acknowledgement}
We would like to thank Peter Feller for many fruitful discussions, and Ivan Arzhantsev for indicating us some references.

\section{The smooth quadric of dimension $2$ and the proof of Theorem~\ref{Thm:EmbQ2}}
\subsection{The isomorphism with the complement of the diagonal in $\p^1_\k\times \p^1_\k$}
In this section, we study the smooth quadric $Q_2\subset \A^3_\k$ given by
\[Q_2=\Spec(\k[x,y,z]/(xy-z(z+1))),\]
and more particularly closed embeddings $\A^1_\k\hookrightarrow Q_2$. Since the closure of $Q_2$ in $\p^3_\k$ is a smooth quadric, isomorphic to $\p^1_\k\times \p^1_\k$, we get the following classical isomorphism:
\begin{lemma}\label{Lem:IsoRho} The morphism
 \[\begin{array}{cccc}
\rho\colon&Q_2&\to& \p^1_\k\times \p^1_\k\\
&(x,y,z)&\mapsto &\left\{\begin{array}{ccccccc}
(&[y:z]&,&[z:x]&)& \text{ if }z\not=0\\
(&[z+1:x]&,&[y:z+1]&  )&\text{ if }z\not=-1
\end{array}\right.\end{array}\]
yields an isomorphism $Q_2\iso (\p^1_\k\times \p^1_\k)\setminus \Delta$, where $\Delta\subset \p^1_\k\times \p^1_\k$ is the diagonal, with an inverse given by 
\[\begin{array}{cccc}
\psi\colon& (\p^1_\k\times \p^1_\k)\setminus \Delta&\to& Q_2\\
&([u_0:u_1],[v_0:v_1])&\mapsto &\left(\frac{u_1v_1}{u_0v_1-u_1v_0},\frac{u_0v_0}{u_0v_1-u_1v_0},\frac{u_1v_0}{u_0v_1-u_1v_0}\right)\end{array}\]
\end{lemma}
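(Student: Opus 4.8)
The plan is a direct verification, since both $\rho$ and its claimed inverse $\psi$ are given by explicit formulas; the only conceptual input is that the defining relation $xy=z(z+1)$ of $Q_2$ is precisely the proportionality that forces the two branches of $\rho$ to agree and that controls the image (this is the affine shadow of the classical fact that the projective closure of $Q_2$ is $\p^1_\k\times\p^1_\k$ with the hyperplane section at infinity giving $\Delta$). First I would check that $\rho$ is a well-defined morphism. On $\{z\neq 0\}$ the pairs $[y:z]$ and $[z:x]$ each have a nonzero entry, and likewise on $\{z\neq -1\}$ using $z+1\neq 0$; these two opens cover $Q_2$ because $0\neq -1$. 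On the overlap the two formulas agree, since $[y:z]=[z+1:x]$ and $[z:x]=[y:z+1]$ both reduce to $xy=z(z+1)$, which holds on $Q_2$.

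Next I would show $\rho$ lands off the diagonal. For a point $([u_0:u_1],[v_0:v_1])$ the condition to lie on $\Delta$ is $u_0v_1-u_1v_0=0$; evaluating this bilinear form on $\rho$ gives $xy-z^2=z$ on $\{z\neq 0\}$ and $(z+1)^2-xy=z+1$ on $\{z\neq -1\}$, each nonzero on its chart. For $\psi$, writing $D=u_0v_1-u_1v_0$ (nonzero exactly off $\Delta$), the three numerators are bilinear in $(u,v)$, so the quotients are invariant under rescaling each $\p^1_\k$-factor and $\psi$ is a well-defined morphism on $(\p^1_\k\times\p^1_\k)\setminus\Delta$. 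One then checks that $\psi$ maps into $Q_2$ by verifying $xy=z(z+1)$: both sides equal $u_0u_1v_0v_1/D^2$, using $z+1=u_0v_1/D$.

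Finally I would confirm that the two composites are the identity. For $\psi\circ\rho$ on $\{z\neq 0\}$ one substitutes $(u_0,u_1,v_0,v_1)=(y,z,z,x)$, finds $D=xy-z^2=z$, and recovers $(x,y,z)$ after cancellation; the chart $\{z\neq -1\}$ is symmetric. For $\rho\circ\psi$, I would note that off $\Delta$ at least one of $u_1v_0\neq 0$ or $u_0v_1\neq 0$ holds (otherwise $D=0$), which is exactly the condition $z\neq 0$ or $z\neq -1$ guaranteeing that one branch of $\rho$ applies; a one-line cancellation then returns $([u_0:u_1],[v_0:v_1])$, proving that $\rho$ and $\psi$ are mutually inverse and hence that $\rho$ is an isomorphism onto $(\p^1_\k\times\p^1_\k)\setminus\Delta$.

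I expect no genuine obstacle: the whole argument is bookkeeping of charts together with a handful of one-line identities, all valid over an arbitrary field. The only spot requiring a little care is the case distinction in $\rho\circ\psi$, where one must confirm that the two branches of $\rho$ actually cover the image of $\psi$ — which is guaranteed precisely by $D\neq 0$.
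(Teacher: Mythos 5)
Your proposal is correct and follows essentially the same route as the paper: a direct verification that $\rho$ is a well-defined morphism landing off the diagonal, followed by checking that $\psi$ is a two-sided inverse. The paper dismisses the composite computations as a ``straight-forward calculation''; you simply carry them out, including the correct chart bookkeeping for $\rho\circ\psi$ via $D\neq 0$.
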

\begin{proof} We first check that $\rho((x,y,z))\in (\p^1_\k\times \p^1_\k)\setminus \Delta$ for each $(x,y,z)\in Q_2$. If $z\not=0$ then $[y:z]\not=[z:x]$, since $xy-z^2=z \not=0$. If $z=0$, then $xy=0$, whence $[z+1:x]=[1:x]\not=[y:1] =[y:z+1]$.

It remains then to check that $\rho\circ \psi=\mathrm{id}_{(\p^1_\k\times \p^1_\k)\setminus \Delta}$ and $\psi\circ \rho=\mathrm{id}_{Q_2}$, which follows from a straight-forward calculation.
\end{proof}
%\begin{proposition}\cite[Theorem~5.4.5(7)(a)]{BlaDub}\label{Prop:P1P1Diag}
%Let $\Delta\subset \p^1_\k\times \p^1_\k$ be the diagonal and let $\pi\colon (\p^1_\k\times \p^1_\k) \setminus \Delta\to \A^1_\k$ be the morphism given by $([u_0:u_1],[v_0:v_1])\mapsto \frac{u_1v_1}{u_0v_1-u_1v_0}$. 
%The group $\Aut((\p^1_\k\times \p^1_\k) \setminus \Delta)$ is the amalgamated product of the two groups 
%\[\begin{array}{rcl}
%A&=&\{g\in \Aut(\p^1_\k\times \p^1_\k)\mid g(\Delta)=\Delta\}\\
%J&=&\Aut((\p^1_\k\times \p^1_\k) \setminus \Delta,\pi)=\{g\in \Aut((\p^1_\k\times \p^1_\k) \setminus \Delta)\mid \exists h\in \Aut(\A^1_\k), h\pi=\pi g\}\end{array}\]
%$($this means that $\Aut((\p^1_\k\times \p^1_\k) \setminus \Delta)$ is generated by $A$ and $J$ and is the quotient of the free group $A\star J$ by the relation induced by $a bc$, $a,b,c\in A\cap J$, $a\cdot b\cdot c=1$ in $A\cap J)$.
%\end{proposition}

\subsection{Families of embeddings}
The following  result is the key step in the proof of Theorem~\ref{Thm:EmbQ2}. 
\begin{lemma}\label{Lem:nup}\item
\begin{enumerate}[label=$(\arabic*)$, ref=\arabic*]
\item\label{nupclosedemb}
For each polynomial $p\in \k[t]$, the morphism $\nu_p\colon \A^1_\k\hookrightarrow Q_2$ given by
 \[\begin{array}{cccc}
\nu_p\colon&\A^1_\k&\to& Q_2\\
&t&\mapsto &(t(1+tp(t)), p(t), tp(t))\end{array}\]
is a closed embedding. 
\item\label{ExtensionAuto}
If $p,q\in \k[t]$ are polynomials of degree $\ge 3$ 
such that $\alpha \nu_p=\nu_q\beta$ for some $\beta\in \Aut(\A^1_\k)$ and $\alpha\in \Aut(Q_2)$, then there exist $\mu\in \k$ and $\lambda \in \k^\ast$ 
such that
\[
	p(t)=\lambda q(\lambda t+\mu) \, , \ \ 
	\beta(t)=\lambda t+\mu \, , \ \ 
	\alpha(x,y,z)=
	\left(\lambda x+\frac{\mu^2}{\lambda} y+2\mu z+\mu, \frac{y}{\lambda}, 
	z+\frac{\mu}{\lambda} y\right) \, .
\]

\end{enumerate}
\end{lemma}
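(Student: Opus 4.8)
The plan is to transport everything to $(\p^1_\k\times\p^1_\k)\setminus\Delta$ through the isomorphism $\rho$ of Lemma~\ref{Lem:IsoRho}, and to read off the two rulings. Writing $\zeta_1=u_1/u_0$ and $\zeta_2=v_1/v_0$ for the affine coordinates on the two factors, so that $\Delta=\{\zeta_1=\zeta_2\}$, a direct substitution gives, for $t$ with $p(t)\neq 0$,
\[
	\rho(\nu_p(t)) = \big([1:t],\,[p(t):1+tp(t)]\big),
\]
that is $\zeta_1=t$ and $\zeta_2=t+1/p(t)$. Thus $\pr_1\circ\rho\circ\nu_p\colon\A^1_\k\to\p^1_\k$ is the standard open immersion $t\mapsto[1:t]$. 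This already settles part~\eqref{nupclosedemb}: the map $\pr_1\circ\rho$ restricts to a left inverse of $\nu_p$ on its image, so $\nu_p$ is injective and an immersion with image isomorphic to $\A^1_\k$; and the only point added to the image in the closure $\overline{\rho(\nu_p(\A^1_\k))}\subset\p^1_\k\times\p^1_\k$ is the boundary point $([0:1],[0:1])\in\Delta$ (attained as $t\to\infty$, since $\zeta_2-\zeta_1=1/p(t)\to 0$), which lies outside $Q_2$. Hence $C_p:=\nu_p(\A^1_\k)$ is closed in $Q_2$.

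Set $d:=\deg p$. The projection of $\overline{C_p}$ to the first factor has degree $1$ and to the second factor degree $d+1$, so $\overline{C_p}$ has class $(d+1)[F_1]+[F_2]$, where $F_1,F_2$ are the two rulings; consequently
\[
	\overline{C_p}\cdot\Delta = d+2,
\]
this intersection being concentrated at the single boundary point $([0:1],[0:1])$. This contact order $d+2$ is the invariant driving the argument.

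For part~\eqref{ExtensionAuto}, an automorphism $\alpha\in\Aut(Q_2)$ extends to a birational self-map $\hat\alpha$ of $\p^1_\k\times\p^1_\k$ preserving the boundary $\Delta=(\p^1_\k\times\p^1_\k)\setminus Q_2$, and from $\alpha\nu_p=\nu_q\beta$ we get $\overline{C_q}=\overline{\alpha(C_p)}$. The key step—and the main obstacle—is to show that $\hat\alpha$ is in fact a biregular automorphism of $\p^1_\k\times\p^1_\k$ that preserves each ruling, i.e.\ that $\alpha$ acts diagonally as $(\zeta_1,\zeta_2)\mapsto(g\zeta_1,g\zeta_2)$ for a single $g\in\PGL_2(\k)$. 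Here I would invoke the amalgamated-product description $\Aut(Q_2)=A\ast_{A\cap J}J$ of \cite{BlaDub}, where $A$ consists of the automorphisms extending biregularly to $\p^1_\k\times\p^1_\k$ and $J$ of those preserving the ruling $\pr_1\circ\rho$, together with a length/degree estimate: a reduced word containing a nontrivial Jonqui\`eres factor strictly increases the contact order of a section with $\Delta$, so it cannot carry $\overline{C_p}$ (contact order $d+2$) to another section $\overline{C_q}$ unless the word has length zero. The hypothesis $\deg p,\deg q\geq 3$ is what makes this estimate strict and excludes the low-degree coincidences. Once $\alpha\in A$, the asymmetric class $(d+1)[F_1]+[F_2]$ with $d+1\geq 4$ forbids the involution swapping the two factors, so $\alpha$ is diagonal and $\deg p=\deg q$.

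It then remains to identify $g$ and derive the formulas. Comparing first coordinates, $\pr_1\circ\rho\circ\alpha\circ\nu_p=\pr_1\circ\rho\circ\nu_q\circ\beta$ reads $g([1:t])=[1:\beta(t)]$, so $g$ fixes the common boundary point $[0:1]$ and agrees with $\beta$ in the coordinate $\zeta_1$; writing $\beta(t)=\lambda t+\mu$ with $\lambda\in\k^\ast$, $\mu\in\k$ gives $g(\zeta)=\lambda\zeta+\mu$. Comparing second coordinates, $g\big(t+1/p(t)\big)=\lambda t+\mu+\lambda/p(t)$ must equal $\beta(t)+1/q(\beta(t))$, whence $1/q(\lambda t+\mu)=\lambda/p(t)$, i.e.\ $p(t)=\lambda q(\lambda t+\mu)$. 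Finally, transporting the diagonal map $\zeta\mapsto\lambda\zeta+\mu$ back to $Q_2$ through $\psi$ from Lemma~\ref{Lem:IsoRho}, using $y=\tfrac{1}{\zeta_2-\zeta_1}$, $z=\tfrac{\zeta_1}{\zeta_2-\zeta_1}$, $x=\tfrac{\zeta_1\zeta_2}{\zeta_2-\zeta_1}$, yields exactly the stated formula for $\alpha$, completing the proof.
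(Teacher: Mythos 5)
Your setup---transporting everything to $(\p^1_\k\times\p^1_\k)\setminus\Delta$ via Lemma~\ref{Lem:IsoRho}, computing that $\overline{C_p}$ is a section of the first ruling, of bidegree $(1,d+1)$, meeting $\Delta$ only at $([0:1],[0:1])$ with $\overline{C_p}\cdot\Delta=d+2$---and your endgame (ruling out the factor swap via the asymmetric bidegree, reading off $\beta(t)=\lambda t+\mu$ from the first coordinate, deducing $p(t)=\lambda q(\lambda t+\mu)$ from the second, and transporting the diagonal map back through $\psi$) coincide with the paper's proof and are correct. Part~(\ref{nupclosedemb}) is also fine, provided the informal ``$t\to\infty$'' limit is replaced by the explicit projective extension $[u:v]\mapsto([u:v],[uP(u,v):u^{d+1}+vP(u,v)])$ as in the paper, so that the boundary intersection $u^{d+2}=0$ is computed over an arbitrary field.

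The genuine gap is precisely at the step you flag as the main obstacle: proving that $\hat\alpha$ extends to a biregular automorphism of $\p^1_\k\times\p^1_\k$. You propose to deduce this from the amalgamated-product structure of $\Aut(Q_2)$ in \cite{BlaDub} together with ``a length/degree estimate: a reduced word containing a nontrivial Jonqui\`eres factor strictly increases the contact order of a section with $\Delta$,'' but you neither state this estimate precisely nor prove it, and taken literally it cannot be true: if a word strictly increased the contact order of every section, its inverse would strictly decrease it. What is actually required is a Jung--van der Kulk-type induction on the length of a reduced word, tracking how the bidegree (equivalently the contact order with $\Delta$) evolves under each affine and each Jonqui\`eres factor; this is the entire content of the hard half of the lemma and is where the hypothesis $\deg p\ge 3$ must do its work. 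The paper sidesteps the amalgamated product entirely: it extends $\alpha$ to a birational self-map $\hat\alpha$ of $\p^1_\k\times\p^1_\k$, takes a minimal resolution $\chi_1,\chi_2$, notes that the unique $(-1)$-curve contracted by $\chi_2$ is the strict transform $\tilde\Delta$ of $\Delta$, deduces from $\Delta^2=2$ and $\tilde\Delta^2=-1$ that exactly three base points lie on $\Delta$, and concludes from $\Gamma_p\cdot\Delta=d+2\ge 5$ that $\tilde\Gamma_p\cdot\tilde\Delta\ge 2$, so $\chi_2(\tilde\Gamma_p)=\Gamma_q$ would be singular, a contradiction. You would need either an argument of this kind or a fully worked-out word-length induction before your proof is complete.
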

\begin{proof}
Using the isomorphism $\rho \colon Q_2\iso (\p^1_\k\times \p^1_\k)\setminus \Delta$ of Lemma~$\ref{Lem:IsoRho}$, we obtain that $\rho\circ \nu_p\colon \A^1_\k\to \p^1_\k\times \p^1_\k$ is given by $t\mapsto ([1:t],[p(t):1+tp(t)])$, which is the restriction of the closed embedding \[\hat{\nu}_p\colon \p^1_\k\hookrightarrow \p^1_\k\times \p^1_\k, [u:v]\mapsto ([u:v],[uP(u,v):u^{d+1}+vP(u,v)]),\] where $d=\deg(p)$ and $P(u,v)=p(\frac{v}{u})u^d$ is the homogenisation of $p$. This implies that $\Gamma_p=\hat{\nu}_p(\p^1_\k)\subset \p^1_\k\times \p^1_\k$ is a smooth closed curve (isomorphic to $\p^1_\k$), and since $\Gamma_p\cap \Delta$ is given by 
$u(u^{d+1}+vP(u,v))-vuP(u,v)=0$, i.e.~$u^{d+2}=0$, this shows that $\nu_p$ is a closed embedding, and thus yields (\ref{nupclosedemb}). 

It remains to prove Assertion~(\ref{ExtensionAuto}). 
We fix two polynomials $p,q\in \k[t]$ of degree $\ge 3$ such that $\alpha \nu_p=\nu_q\beta$ for some $\beta\in \Aut(\A^1_\k)$ and $\alpha\in \Aut(Q_2)$. This implies in particular that the automorphism $\alpha'=\rho^{-1}\alpha\rho\in \Aut((\p^1_\k\times \p^1_\k)\setminus \Delta)$ sends $\Gamma_p\setminus \Delta$ onto $\Gamma_q\setminus \Delta$.

We first prove that  $\alpha'\in \Aut((\p^1_\k\times \p^1_\k)\setminus \Delta)$ extends to an automorphism $\hat\alpha\in  \Aut(\p^1_\k\times \p^1_\k)$. Assume for contradiction that this is not the case. The map $\alpha'$ would then extend to a birational map $\hat{\alpha}\colon \p^1_\k\times \p^1_\k\dasharrow \p^1_\k\times \p^1_\k$, which is not an automorphism. We consider the minimal resolution of $\hat\alpha$, which is
\[
	\xymatrix@R=4mm@C=2cm{& Z\ar[ld]_{\chi_1}\ar[rd]^{\chi_2} \\
\p^1_\k\times \p^1_\k\ar@{-->}[rr]^{\hat\alpha} && \p^1_\k\times \p^1_\k\\
 (\p^1_\k\times \p^1_\k)\setminus \Delta \ar@{^{(}->}[u]\ar^{\simeq}[rr]&&    (\p^1_\k\times \p^1_\k)\setminus \Delta\ar@{^{(}->}[u] }
\]
where $\chi_1$, $\chi_2$ are birational morphisms. The resolution being minimal, every $(-1)$-curve $E\subset Z$ contracted by $\chi_2$ is not contracted by $\chi_1$, so $\chi_1(E)\subset \p^1_\k\times \p^1_\k$ is contracted by $\hat{\alpha}$. There is thus a unique $(-1)$-curve contracted by $\chi_2$, which is the strict transform $\tilde{\Delta}$ of $\Delta$, and satisfies $\chi_1(\tilde\Delta)=\Delta$. As $\Delta^2=2$ and $(\tilde{\Delta})^2=-1$, there are exactly three base-points of $\chi_1^{-1}$ that lie on the curve $\Delta$ (as proper point or infinitely near points).  Since $\Gamma_p$ is smooth of bidegree $(1,1+\deg p)$, we get 
$\Gamma_p\cdot \Delta=2+\deg p\ge 5$, which implies that the strict transforms of $\Gamma_p$ and $\Delta$ on $Z$ satisfy $\tilde{\Gamma}_p\cdot \tilde\Delta\ge 2$ (as only three points belonging to $\Delta$ have been blown-up). As the curve $\tilde\Delta$ is contracted by $\chi_2$, the curve $\chi_2(\tilde{\Gamma}_p)$ is singular. This contradicts the equality $\chi_2(\tilde{\Gamma}_p)=\Gamma_q$, which follows from the fact that $\hat\alpha(\Gamma_p\setminus \Delta)=\Gamma_q\setminus \Delta$.

We have shown that the extension of $\alpha'=\rho^{-1}\alpha\rho\in \Aut((\p^1_\k\times \p^1_\k)\setminus \Delta)$ is an automorphism $\hat{\alpha}\in \Aut(\p^1_\k\times \p^1_\k)$, which satisfies therefore $\hat{\alpha}(\Delta)=\Delta$ and $\hat{\alpha}(\Gamma_p)=\Gamma_q$. The curves $\Gamma_p$ and $\Gamma_q$ being of bidegree $(1,1+\deg p)$ and $(1,1+\deg q)$, we get $\deg p=\deg q$ and we obtain that $\hat\alpha$ does not exchange the two factors of $\p^1_\k\times \p^1_\k$. Moreover, as the point $([0:1],[0:1])=\Delta \cap \Gamma_p=\Delta\cap \Gamma_q$ is fixed, and as 
the diagonal $\Delta$ is invariant, we can write $\hat\alpha$ as
\[\hat\alpha([u_0:u_1],[v_0:v_1])=([u_0:\lambda u_1+\mu u_0],[v_0:\lambda v_1+\mu v_0]),\] for some $\lambda\in \k^*$, $\mu\in \k$. 

The equality $\alpha \nu_p=\nu_q\beta$ implies that $\hat\alpha\hat{\nu}_p=\hat{\nu}_q \hat\beta$, for some automorphism $\hat\beta\in \Aut(\p^1_\k)$, which is the extension of $\beta$ and therefore it is 
of the form $[u:v]\mapsto [u:\lambda v+\mu u]$. We then compute
\[
	\begin{array}{llllll}
	\hat\alpha\hat{\nu}_p([u:v])=
	([u:\lambda v+\mu u],[uP(u,v):\lambda u^{d+1}+\lambda vP(u,v)+\mu uP(u,v)])\\
	\hat{\nu}_q \hat\beta([u:v])= 
	([u:\lambda v+\mu u],[uQ(u,\lambda v+\mu u):
	u^{d+1}+(\lambda v+\mu u)Q(u,\lambda v+\mu u)])
	\end{array}
\]
and obtain that $P(u,v)=\lambda Q(u, \lambda v+\mu u)$. Remembering that $P(u,v)=p(\frac{v}{u})u^d$ and $Q(u,v)=q(\frac{v}{u})u^d$ we obtain that $p(t)=\lambda q(\lambda t+\mu)$. We then compute the explicit form of $\alpha$ by conjugating $\hat\alpha$ with $\rho^{-1}$.
\end{proof}
\begin{example}
\label{exa:Non-equivalent_Embeddings}
For each $n\ge 1$, let   $p_n(t) = t^n(t+1)^{n+1}$.
The closed curve $C_n=\nu_{p_n}(\A^1_\k)\subset Q_2$ is 
isomorphic to $\A^1_\k$, via
 \[
 	\begin{array}{cccc}
		\nu_{p_n}\colon&\A^1_\k&\hookrightarrow& Q_2\\
		&t&\mapsto &(t(1+t p_n(t)), p_n(t), t p_n(t)).
	\end{array}
\] 
Then Lemma~\ref{Lem:nup}(\ref{ExtensionAuto}) shows that all curves $C_n$ are non-equivalent for different $n \geq 1$, 
and that the identity is the only automorphism of $C_n$ that extends to $Q_2$.
\end{example}
The proof of Theorem~\ref{Thm:EmbQ2} is now a consequence of Lemma~\ref{Lem:nup}.
\begin{proof}[Proof of Theorem~$\ref{Thm:EmbQ2}$]
If $\k$ is the field with two elements, then we conclude by
Example~\ref{exa:Non-equivalent_Embeddings}.
Hence we can assume that $\k$ contains more than two elements.
For each $n\ge 1$ and each $\lambda\in \k$, 
$\varepsilon \in \k \setminus \{ 0, 1 \}$,
one defines $p_{n,\varepsilon}(t)=t^n(t+1)^{n+1}(t + \varepsilon)^{n+2}\in \k[t]$, and let $C_{n,\varepsilon}\subset Q_2$ be the closed curve given by $\nu_{p_{n,\varepsilon}}(\A^1_\k)$, which is isomorphic to $\A^1_\k$ (Lemma~\ref{Lem:nup}(\ref{nupclosedemb})). 

Lemma~\ref{Lem:nup}(\ref{ExtensionAuto}) implies that the identity is the only automorphism of $C_{n,\varepsilon}$ that extends to an automorphism of $Q_2$, since $\lambda p_{n,\varepsilon}(\lambda t+\mu)\not=p_{n,\varepsilon}(t)$, for
$(\lambda, \mu) \in (\k^\ast \times\k) \setminus \{ (1, 0) \}$.

Similarly, Lemma~\ref{Lem:nup}(\ref{ExtensionAuto}) shows that $C_{n,\varepsilon}$ is  equivalent to $C_{n',\varepsilon'}$ if and only if $n=n'$ and $\varepsilon=\varepsilon'$.
\end{proof}

%\begin{remark}
%If $\k$ is infinite, we can also find families of examples of the same degree, by letting the polynomial $p$ vary in a continuous way.
%\end{remark}

\section{Variables of polynomial rings}
In this section, we give some results on variables of polynomial rings. Most of them are classical or belong to the folklore. We include them for self-containedness and for lack of precise references.

\begin{definition}
Let $S$ be a ring and let $R\subset S$ be a subring. We denote by $\Aut_R(S)$ the group of automorphisms of the $R$-Algebra $S$. More precisely, 
\[\Aut_R(S)=\left\{\text{ automorphism of rings } f\colon S\to S \text{ such that } 
f|_R=\mathrm{id}_R\right\}.\]
\end{definition}

\begin{definition}
Let $R$ be a domain and $S$ be a polynomial ring in $n\ge 1$ variables over 
$R$, i.e.~$R\subset S$ and there exist $x_1,\dots,x_n\in S$ such that each element of $S$ can be written in a unique way as $f(x_1,\dots,x_n)$, where $f$ is
a polynomial in the $x_i$ with coefficients in $R$. An element $v\in S$ is called \emph{variable of the $R$-algebra $S$}
if there exists $f\in \Aut_R(S)$ such that $f(v)=x_1$.
\end{definition}

In the sequel, we often denote by $R[t]$ or $R[x]$ the polynomial ring in one variable over $R$, by $R[x,y]$ the polynomial ring in two variables over $R$
and by $R[x_1, \ldots, x_n]$ the polynomial ring in $n$ variables over $R$.

\begin{lemma}\label{Lem:CharacVariable}
Let $R$ be a domain, let $S=R[x_1,\dots,x_n]$ be the polynomial ring in $n$ variables over $R$ and let $v\in S$. The following conditions are equivalent:
\begin{enumerate}[label=$(\arabic*)$, ref=\arabic*]
\item
$v$ is a variable of the $R$-algebra $S$;
\item
The $R[t]$-algebra $S[t]/(v-t)$ is isomorphic to a polynomial ring in $n-1$ variables over $R[t]$.
\end{enumerate}
\end{lemma}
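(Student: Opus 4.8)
The plan is to prove the two implications separately, exploiting the fact that ``being a variable'' is about finding an automorphism of $S$ fixing $R$, while the condition in (2) is about the structure of a related ring $S[t]/(v-t)$ as an $R[t]$-algebra. The key observation tying the two together is that $S[t]/(v-t)$ is naturally isomorphic, as an $R$-algebra, to $S$ itself: the relation $v=t$ lets us eliminate the new indeterminate $t$, identifying it with the element $v\in S$. Under this identification the subring $R[t]\subset S[t]/(v-t)$ becomes the subring $R[v]\subset S$ generated over $R$ by the single element $v$. So the content of (2) is precisely that $S$, viewed as an $R[v]$-algebra, is a polynomial ring in $n-1$ variables over $R[v]$.

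For the implication $(1)\Rightarrow(2)$, I would start from an automorphism $f\in\Aut_R(S)$ with $f(v)=x_1$. Since $f$ fixes $R$, it restricts to an isomorphism of $R$-algebras $R[v]\xrightarrow{\sim} R[x_1]$ sending $v\mapsto x_1$, and hence extends to an isomorphism of $R[v]$-algebras from $S$ (with its $R[v]$-structure) to $S$ (with its $R[x_1]$-structure). But over $R[x_1]$ the ring $S=R[x_1][x_2,\dots,x_n]$ is visibly a polynomial ring in the $n-1$ variables $x_2,\dots,x_n$. Transporting this through $f$ and the identification $S\simeq S[t]/(v-t)$ described above gives exactly the statement of (2).

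For the converse $(2)\Rightarrow(1)$, I would unwind the identification in the opposite direction. Condition (2) supplies an isomorphism of $R[v]$-algebras $S\simeq R[v][w_2,\dots,w_n]$ for suitable $w_2,\dots,w_n\in S$. Since $v$ is transcendental over $R$ inside $S$ (it is one of the generators on the right-hand side, being the base variable $t$), the elements $v,w_2,\dots,w_n$ form a system of $n$ variables of $S$ over $R$: that is, $S=R[v,w_2,\dots,w_n]$ is a polynomial ring in these $n$ elements. The assignment $v\mapsto x_1$, $w_i\mapsto x_i$ then defines an $R$-algebra automorphism $g$ of $S$ with $g(v)=x_1$, witnessing that $v$ is a variable.

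The step I expect to require the most care is justifying the natural $R$-algebra isomorphism $S[t]/(v-t)\simeq S$ and, in particular, verifying that it carries $R[t]$ onto $R[v]$ and that $v$ is genuinely algebraically independent over $R$ so that $R[v]$ is itself a polynomial ring in one variable; these are the points where one must be careful that $R$ is only assumed to be a domain and that no collapsing of the polynomial structure occurs. Once this bookkeeping is in place, both implications reduce to the transport of a polynomial-ring structure along an isomorphism, which is routine.
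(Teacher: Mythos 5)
Your proposal is correct and follows essentially the same route as the paper: the paper's proof is precisely the chain of isomorphisms $S[t]/(v-t)\iso S[t]/(x_1-t)\iso R[t][x_2,\dots,x_n]$ in one direction, and in the other direction the composition $S\iso S[t]/(t-v)\iso R[t][x_2,\dots,x_n]\iso R[x_1,\dots,x_n]$, which is just your identification of $S[t]/(v-t)$ with $S$ (carrying $R[t]$ to $R[v]$) written out explicitly. Your care about $v$ being transcendental over $R$, so that $R[v]$ is genuinely a polynomial ring, is the right point to flag and is guaranteed by condition $(2)$ since $R[t]$ must inject into any polynomial ring over it.
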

\begin{proof}
If $v$ is a variable, then 
there exists $f\in \Aut_R(S)$ such that $f(v)=x_1$. Using the natural inclusion $\Aut_R(S)\hookrightarrow \Aut_{R[t]}(S[t])$, we get isomorphisms of $R[t]$-algebras \[S[t]/(v-t)\iso S[t]/(x_1-t)\iso R[x_2,\dots,x_n,t]\iso R[t][x_2,\dots,x_n].\]

Conversely, suppose that the $R[t]$-algebra $S[t]/(v-t)$ is isomorphic to  a polynomial ring in $n-1$ variables over $R[t]$. This yields an $R[t]$-isomorphism $\psi\colon S[t]/(t-v)  \iso R[t][x_2,\dots,x_n]$. We then compose the isomorphisms of $R$-algebras
\[
	\begin{array}{cccccccc}
		S=R[x_1,\dots,x_n] & \iso & S[t]/(t-v) & \stackrel{\psi}{\longrightarrow} & 
		R[t][x_2,\dots,x_n] 
		%& \iso & R[x_1,\dots,x_n]
		\\
		f & \mapsto &f +(t-v)\cdot S[t]& & 
		%f(t,x_2,\dots,x_n) & \mapsto & f(x_1,x_2,\dots,x_n)	
	\end{array}
\]
and 
\[
	\begin{array}{ccc}
		R[t][x_2,\dots,x_n] & \iso & R[x_1,\dots,x_n]\\
		f(t,x_2,\dots,x_n)& \mapsto & f(x_1,x_2,\dots,x_n)
	\end{array}
\]
and obtain an element of $\Aut_R(S)$ that sends $v$ onto $x_1$. 
\end{proof}

\begin{lemma}\label{Lemm:ProdKxKxy}
Let $\k$ be a field, let $\k[x_1,\dots,x_n]$ be the polynomial ring in $n\ge 1$ variables over $\k$ and let $w\in \k[x_1,\dots,x_n]$ be a variable of this $\k$-algebra.

Then $\k[w]$ is factorially closed in $\k[x_1,\dots,x_n]$, i.e.~for all $f,g\in \k[x_1,\dots,x_n]\setminus \{0\}$, we have 
$fg\in \k[w]\Leftrightarrow f\in \k[w]\text{ and }g\in \k[w].$
\end{lemma}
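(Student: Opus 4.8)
The plan is to reduce immediately to the case $w = x_1$. Since $w$ is a variable of the $\k$-algebra $\k[x_1,\dots,x_n]$, there is an automorphism $f\in\Aut_\k(\k[x_1,\dots,x_n])$ with $f(x_1)=w$. The property of being factorially closed is preserved under automorphisms of the ring: if $\k[x_1]$ is factorially closed, then applying $f$ shows that $f(\k[x_1]) = \k[w]$ is factorially closed as well. Hence it suffices to prove that $\k[x_1]$ is factorially closed in $S=\k[x_1,\dots,x_n]$.

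For this, the forward implication $fg\in\k[x_1]\Rightarrow f,g\in\k[x_1]$ is the substantial one (the converse is trivial, since $\k[x_1]$ is a subring). I would use the grading of $S$ by the total degree in the variables $x_2,\dots,x_n$, i.e.\ write each element as a sum of its homogeneous components with respect to this grading, where $\k[x_1]$ is exactly the degree-$0$ part. The key point is that $S$ is an integral domain, so if $f,g\in S\setminus\{0\}$ have top-degree components $f_d$ and $g_e$ (in the $x_2,\dots,x_n$-grading) of degrees $d$ and $e$ respectively, then the top-degree component of $fg$ is $f_d g_e$, which is nonzero precisely because $S$ is a domain and therefore the leading-coefficient product does not vanish. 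Thus $\deg(fg)=d+e$ in this grading. If $fg\in\k[x_1]$, then $\deg(fg)=0$, forcing $d=e=0$, which means $f,g\in\k[x_1]$. This settles the claim.

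I expect the main obstacle — really the only genuine point requiring care — to be confirming that the product of the top-degree homogeneous components is nonzero, i.e.\ that leading terms multiply without cancellation. This is where the integral-domain property of the polynomial ring $S$ is essential: for a fixed monomial ordering refining the $(x_2,\dots,x_n)$-grading, the leading monomial of a product is the product of the leading monomials, and the corresponding coefficient is a product of nonzero elements of the domain $\k[x_1]$, hence nonzero. Once this leading-term behaviour is in place, the degree bookkeeping is immediate and the reduction via the automorphism $f$ closes the argument.
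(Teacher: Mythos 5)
Your proposal is correct and matches the paper's own argument: both reduce to $w=x_1$ via an automorphism and then use additivity of degrees under multiplication in the domain $\k[x_1,\dots,x_n]$ to force $f,g$ to have degree zero in the remaining variables. The only cosmetic difference is that the paper tracks $\deg_{x_i}$ for each $i\ge 2$ separately rather than the total degree in $x_2,\dots,x_n$.
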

\begin{proof}
If $f\in \k[w]$ and $g\in \k[w]$, then $fg\in \k[w]$, since $\k[w]$ is a subring of 
$\k[x_1,\dots,x_n]$.

Conversely, suppose that $fg\in \k[w]$. Choose $\psi\in \Aut_\k(\k[x_1,\dots,x_n])$ such that $\psi(w)=x_1$. Then, $\psi(f),\psi(g)\in \k[x_1,\dots,x_n]$ are two polynomials such that $\psi(f)\cdot \psi(g)\in \k[x_1]$. For each $i\ge 2$, the degree in $x_i$ satisfies $\deg_{x_i}(\psi(f))+\deg_{x_i}(\psi(f))=\deg_{x_i}(\psi(f)\cdot \psi(g))=0$, so $\deg_{x_i}(\psi(f))=\deg_{x_i}(\psi(f))=0$ since both elements are non-zero. Hence, $\psi(f),\psi(g)\in \k[x_1]$.
Applying $\psi^{-1}$, we get $f\in \k[w]$ and $g\in \k[w]$.
\end{proof}
\subsection{Variables of polynomial rings in two variables}
%Let us recall the following technical lemma:

We will need the following two technical lemmas:
\begin{lemma}\label{Lem:Dominant}
Let $\k$ be a field, let $w$ be a variable of the $\k$-algebra $\k[x,y]$ and let $v\in \k[x,y]$ be a polynomial. The following conditions are equivalent:
\begin{enumerate}[label=$(\arabic*)$, ref=\arabic*]
\item
$v\in \k[w]$;\label{vkw1}
\item
For each $u\in \k[w]$, the elements $u$ and $v$ are algebraically dependent over $\k$.\label{vkw2}
\item
There exists $u\in \k[w]\setminus \k$ such that $u$ and $v$ are algebraically dependent over $\k$.\label{vkw3}
\end{enumerate}
\end{lemma}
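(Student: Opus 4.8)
The plan is to establish the cyclic chain of implications $(\ref{vkw1})\Rightarrow(\ref{vkw2})\Rightarrow(\ref{vkw3})\Rightarrow(\ref{vkw1})$, of which the first two are short. For $(\ref{vkw1})\Rightarrow(\ref{vkw2})$, I would observe that since $w$ is a variable, the subring $\k[w]$ is a polynomial ring in one variable, hence of transcendence degree $1$ over $\k$; therefore any two of its elements are algebraically dependent over $\k$, and this applies to $u$ and $v$, both of which lie in $\k[w]$ by hypothesis. For $(\ref{vkw2})\Rightarrow(\ref{vkw3})$ it suffices to exhibit a single element of $\k[w]\setminus\k$: a variable is never constant, so $w\in\k[w]\setminus\k$, and taking $u=w$ in $(\ref{vkw2})$ yields $(\ref{vkw3})$. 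The substance of the lemma is therefore the remaining implication $(\ref{vkw3})\Rightarrow(\ref{vkw1})$.

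For $(\ref{vkw3})\Rightarrow(\ref{vkw1})$, I would first reduce to the case $w=x$. Choose $\psi\in\Aut_\k(\k[x,y])$ with $\psi(w)=x$, which exists because $w$ is a variable. Since $\psi$ is a $\k$-algebra automorphism, it fixes the coefficients of any polynomial relation and hence preserves algebraic dependence over $\k$; moreover $\psi(\k[w])=\k[x]$ and $\psi(\k)=\k$, so $\psi$ maps $\k[w]\setminus\k$ bijectively onto $\k[x]\setminus\k$. Replacing $u,v$ by $\psi(u),\psi(v)$, I may thus assume $u\in\k[x]\setminus\k$ and it remains to prove $v\in\k[x]$, after which applying $\psi^{-1}$ transports the conclusion back to $v\in\k[w]$.

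Now suppose for contradiction that $v\notin\k[x]$, i.e.\ $d:=\deg_y(v)\ge 1$, and let $F\in\k[U,V]\setminus\{0\}$ satisfy $F(u,v)=0$, written $F=\sum_i a_i(U)V^i$ with $a_i\in\k[U]$. Substituting gives $\sum_i a_i(u)\,v^i=0$ in $\k(x)[y]$, with coefficients $a_i(u)\in\k[x]\subset\k(x)$. I would then run a leading-$y$-coefficient argument: if some $a_i(u)$ were nonzero, letting $n$ be the largest such index, the term $a_n(u)v^n$ has $y$-degree $nd$ with nonzero leading coefficient $a_n(u)\cdot(\mathrm{lc}_y v)^n$, which the lower terms $a_i(u)v^i$ of strictly smaller $y$-degree cannot cancel, forcing the sum to be nonzero---a contradiction. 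Hence $a_i(u)=0$ for every $i$. Since $u\in\k[x]\setminus\k$ is a non-constant polynomial and so transcendental over $\k$, each $a_i(u)=0$ forces $a_i=0$, whence $F=0$, contradicting $F\ne 0$. Therefore $v\in\k[x]$.

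The step I expect to require the most care is the use of the hypothesis $d\ge 1$: it is exactly this that makes the monomials $a_i(u)v^i$ occupy distinct $y$-degrees, ruling out cancellation and collapsing the relation $\sum_i a_i(u)v^i=0$ to the vanishing of each $a_i(u)$. Conceptually this is the statement that $\k(x)$ is algebraically closed in the purely transcendental extension $\k(x)(y)$, so that an element of $\k(x)[y]$ of positive $y$-degree is transcendental over $\k(x)$; the direct degree computation makes the argument self-contained and avoids invoking that fact as a black box.
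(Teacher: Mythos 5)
Your proof is correct. The easy implications and the reduction to $w=x$ via an automorphism $\psi$ with $\psi(w)=x$ match the paper exactly, but your treatment of the key implication $(\ref{vkw3})\Rightarrow(\ref{vkw1})$ is genuinely different. The paper argues geometrically: after further reducing to an algebraically closed field (using $\overline{\k}[x]\cap\k[x,y]=\k[x]$), it shows that if $u\in\k[x]\setminus\k$ and $v\notin\k[x]$ then the morphism $(x,y)\mapsto(u(x),v(x,y))$ is dominant --- by solving $u(x)=a$ at a point where the leading $y$-coefficient of $v$ does not vanish --- so that $u$ and $v$ are algebraically independent. You instead take a nonzero relation $F(u,v)=0$, expand it as $\sum_i a_i(u)v^i$ in $\k(x)[y]$, and kill it by comparing $y$-degrees: the hypothesis $\deg_y v\ge 1$ forces the top term $a_n(u)v^n$ to dominate, so all $a_i(u)$ vanish, and transcendence of $u$ over $\k$ then gives $F=0$, a contradiction. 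Your version is purely algebraic, stays over the ground field (no extension of scalars needed), and is self-contained; the paper's version is shorter to state but leans on the geometric notion of dominance and on the passage to $\overline{\k}$. Both are sound; the only point requiring care in yours --- that $a_n(u)\cdot(\mathrm{lc}_y v)^n\neq 0$ because $\k[x]$ is a domain --- is handled correctly.
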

\begin{proof}The 
implications~$(\ref{vkw1})\Rightarrow (\ref{vkw2})\Rightarrow (\ref{vkw3})$ 
being clear, we only need to prove~$(\ref{vkw3})\Rightarrow (\ref{vkw1})$.
Replacing $v$ and $w$ with $f(v)$ and $f(w)$, for some $f\in\Aut_\k(\k[x,y])$, 
we can assume that $w=x$. Denoting by $\overline{\k}$ the algebraic closure of 
$\k$, we have $\overline{\k}[x]\cap \k[x,y]=\k[x]$, so we can assume that 
$\k=\overline{\k}$.

We then consider the morphism $\tau\colon \A^2_\k\to \A^2_\k$ given by $(x,y)\mapsto (u(x),v(x,y))$, which is dominant if and only if $u,v$ are algebraically independent over $\k$. It remains then to see that $\tau$ is dominant if $u\in \k[x]\setminus \k$ and $v\not\in \k[x]$. 
Let $v(x, y) = \sum_{i=0}^d v_i(x) y^i$, where $v_d \neq 0$ and $d > 0$.
For a general $a \in \k$, $u(x) = a$ 
has a solution $x_0$ such that $v_d(x_0) \neq 0$, 
since $\k$ is algebraically closed.
Hence $v(x_0, y)= b$ has a solution for all $b \in \k$. This proves that
$\tau$ is dominant.
\end{proof}
\begin{lemma}\label{Lem:TwoVariablesModulo}
Let $\k$ be a field, let $p\in \k[t]$ be an irreducible element and let $\k_p=\k[t]/(p)$ be the corresponding residue field.
Let $u,v\in \k[t][x,y]$ be elements such that $\k(t)[u,v]=\k(t)[x,y]$. Then, the classes $u_0,v_0\in \k_p[x,y]$ of $u,v$ satisfy one of the following properties, depending on the Jacobian determinant $\nu=\frac{\partial u}{ \partial x}\cdot \frac{\partial v}{\partial y}-\frac{\partial u}{ \partial y}\cdot \frac{\partial v}{\partial x}\in \k[t]\setminus \{0\}$:
\begin{enumerate}[label=$(\arabic*)$, ref=\arabic*]
\item\label{pdivnu}
If $p$ divides $\nu$, then $u_0,v_0$ are algebraically dependent over $\k_p$.
\item\label{pnotdivnu}
If $p$ does not divide $\nu$, then $\k_p[u_0,v_0]=\k_p[x,y]$. In particular, both $u_0$ and $v_0$ are variables of the $\k_p$-algebra $\k_p[x,y]$.
\end{enumerate}
\end{lemma}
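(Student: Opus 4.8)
The plan is to pass to the localisation $R=\k[t]_{(p)}$, which is a discrete valuation ring with uniformiser $p$, fraction field $\k(t)$ and residue field $\k_p$, and to compare the automorphism $(x,y)\mapsto(u,v)$ of $\k(t)[x,y]$ with its reduction $(x,y)\mapsto(u_0,v_0)$ modulo $p$. First I would record two preliminary facts. Since $u,v\in\k[t][x,y]$, the partial derivatives $\partial u/\partial x,\dots$ lie in $\k[t][x,y]$, so $\nu\in\k[t][x,y]$; on the other hand the hypothesis $\k(t)[u,v]=\k(t)[x,y]$ means that $(u,v)$ is an automorphism of the polynomial ring $\k(t)[x,y]$ over the field $\k(t)$, whence $\nu\in\k(t)^\ast$. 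Intersecting gives $\nu\in\k[t]\setminus\{0\}$, as claimed. Moreover, reduction modulo $p$ commutes with $\partial/\partial x$ and $\partial/\partial y$ (which treat $\k[t]$ as scalars), so the Jacobian of the reduced pair $(u_0,v_0)\in\k_p[x,y]$ is exactly the residue $\nu_0$ of $\nu$. Thus \emph{$p\mid\nu$ holds if and only if the Jacobian of $(u_0,v_0)$ vanishes}, and the whole lemma becomes a statement about how the vanishing of this Jacobian governs the algebraic (in)dependence of $u_0,v_0$.

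The technical heart is the following claim, which I would prove by clearing denominators: \emph{if $u_0,v_0$ are algebraically independent over $\k_p$, then $\k_p[u_0,v_0]=\k_p[x,y]$}. Since $\k(t)[u,v]=\k(t)[x,y]$, there are $A,B\in\k(t)[X,Y]$ with $x=A(u,v)$ and $y=B(u,v)$. If some coefficient of $A$ or $B$ had negative $p$-adic valuation, let $m>0$ be the negative of the least valuation occurring; then $p^mA,p^mB\in R[X,Y]$ and at least one of them has a coefficient that is a unit of $R$. Reducing the identities $p^mx=(p^mA)(u,v)$ and $p^my=(p^mB)(u,v)$ modulo $p$ yields $\overline{p^mA}(u_0,v_0)=\overline{p^mB}(u_0,v_0)=0$ with $(\overline{p^mA},\overline{p^mB})\neq0$, a non-trivial polynomial relation over $\k_p$ satisfied by $u_0,v_0$; this contradicts independence. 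Hence $A,B\in R[X,Y]$, and reducing modulo $p$ gives $x=A_0(u_0,v_0)$ and $y=B_0(u_0,v_0)$ with $A_0,B_0\in\k_p[X,Y]$, so $\k_p[x,y]\subseteq\k_p[u_0,v_0]$. The reverse inclusion is immediate from $u_0,v_0\in\k_p[x,y]$, and since $u_0,v_0$ are algebraically independent the surjection $X\mapsto u_0,\,Y\mapsto v_0$ is also injective, hence an isomorphism, so $u_0,v_0$ are variables.

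With these in hand the two assertions follow quickly. For assertion (2), assume $p\nmid\nu$, so the Jacobian $\nu_0$ of $(u_0,v_0)$ is a nonzero constant. I would invoke the fact that algebraically dependent elements of $\k_p[x,y]$ have vanishing Jacobian --- proved via the cotangent exact sequence for the tower $\k_p\subseteq\k_p(u_0,v_0)\subseteq\k_p(x,y)$, whose relevant image is at most one-dimensional because $\k_p(x,y)$ has transcendence degree $1$ over $\k_p(u_0,v_0)$, and which is valid in every characteristic. Its contrapositive shows that $u_0,v_0$ are independent, and the technical claim then gives $\k_p[u_0,v_0]=\k_p[x,y]$. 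For assertion (1), assume $p\mid\nu$, i.e.\ $\nu_0=0$, and suppose for contradiction that $u_0,v_0$ are independent; the technical claim then makes $(u_0,v_0)$ an automorphism of $\k_p[x,y]$, whose Jacobian is therefore a unit of $\k_p$, forcing $\nu_0\neq0$ --- a contradiction. Hence $u_0,v_0$ are dependent.

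The main obstacle is the positive-characteristic behaviour of the Jacobian. The equivalence ``Jacobian $=0\Leftrightarrow$ algebraically dependent'' fails over $\k_p$ when $\car(\k)>0$: the implication ``dependent $\Rightarrow$ Jacobian $=0$'' still holds (and is all I need for assertion (2)), but its converse is false, as $x^p,y^p$ are independent with vanishing Jacobian. The point of the detour for assertion (1) is precisely to sidestep this false converse: rather than deducing dependence from $\nu_0=0$ directly, I derive a contradiction from the assumption of independence by upgrading $(u_0,v_0)$ to an automorphism through the denominator-clearing claim and then using that automorphisms of $\k_p[x,y]$ have unit Jacobian.
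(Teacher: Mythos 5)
Your proof is correct and follows essentially the same route as the paper's: the heart of both arguments is clearing the $p$-denominators of the polynomials expressing $x,y$ in terms of $u,v$ and reducing modulo $p$, so that one either obtains a nontrivial algebraic relation between $u_0,v_0$ or obtains $\k_p[u_0,v_0]=\k_p[x,y]$. The only differences are organizational --- you isolate the implication ``independent $\Rightarrow$ generating'' as a separate claim and you spell out a characteristic-free justification of ``dependent $\Rightarrow$ Jacobian zero'', which the paper uses without comment.
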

\begin{proof}
Since $\k(t)[u,v]=\k(t)[x,y]$, there are polynomials $P,Q\in \k(t)[X,Y]$ such that $P(u,v)=x$, $Q(u,v)=y$. Moreover, the polynomial $\nu=\frac{\partial u}{ \partial x}\cdot \frac{\partial v}{\partial y}-\frac{\partial u}{ \partial y}\cdot \frac{\partial v}{\partial x}\in \k[t,x,y]$ belongs to $\k(t)^*$ and thus to $\k[t]\setminus \{0\}$. The element $\nu_0=\frac{\partial u_0}{ \partial x}\cdot \frac{\partial v_0}{\partial y}-\frac{\partial u_0}{ \partial y}\cdot \frac{\partial v_0}{\partial x}$ is then the class of $\nu$ in $\k_p$.

We write $P=\frac{\tilde{P}}{\alpha}$, $Q=\frac{\tilde{Q}}{\beta}$, where $\tilde{P},\tilde{Q}\in \k[t][X,Y]$, $\alpha,\beta\in \k[t]\setminus \{0\}$ and such that $p$ does not divide both $\alpha$ and $\tilde{P}$ (and the same for $\beta$ and $\tilde{Q}$). We then get 
\[
	\tilde{P}_0(u_0,v_0)=\alpha_0 x \, , \quad \tilde{Q}_0(u_0,v_0)=\beta_0 y
\]
where $\tilde{P}_0,\tilde{Q}_0\in \k_p[X,Y]$ are the classes of $\tilde{P},\tilde{Q}$ and $\alpha_0,\beta_0\in \k_p$ are the classes of $\alpha,\beta$. 

If $\alpha_0$ and $\beta_0$ are not equal to zero, then $\k_p[u_0,v_0]=\k_p[x,y]$. In particular, $u_0$ and $v_0$ are variables of the $\k_p$-algebra $\k_p[x,y]$ and $\nu_0\in \k_p^*$, so $p$ does not divide $\nu$.

If $\alpha_0=0$, then $\tilde{P}_0\not=0$ and $\tilde{P}_0(u_0,v_0)=0$ implies that $u_0$ and $v_0$ are algebraically dependent over $\k_p$. The same conclusion holds when $\beta_0=0$. In both cases, the Jacobian determinant $\nu_0$ is equal to zero, so $p$ divides $\nu$.

This yields~(\ref{pdivnu}) and~(\ref{pnotdivnu}).
\end{proof}

We recall the following classical result, essentially equivalent to the Jung-van der Kulk theorem:
\begin{lemma}\label{Lemm:JungExplicit}
Let $\k$ be a field, let $\k[x,y]$ be the polynomial ring in two variables over 
$\k$, let $f\in \Aut_\k(\k[x,y])$ and $u=f(x), v=f(y)\in \k[x,y]$.
%Using the degree of polynomials in $x,y$, the following holds: 
If $\deg(u)\ge \deg(v)>1$, then there exists a polynomial $P$ with coefficients in $\k$ such that $\deg(u-P(v))<\deg(u)$.
\end{lemma}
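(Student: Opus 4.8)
The plan is to reduce the lemma to a sharp statement about the leading (top--degree) homogeneous parts $\bar u,\bar v$ of $u,v$, and then to extract that statement from the Jung--van der Kulk theorem. Set $n=\deg u\ge m=\deg v>1$. For any $P(T)=\sum_k c_kT^k\in\k[T]$ the term $c_kv^k$ has degree exactly $km$ and leading form $c_k\bar v^{\,k}$; since these degrees are pairwise distinct multiples of $m$, the polynomial $P(v)$ has degree $m\deg P$ and leading form $c_{\deg P}\,\bar v^{\,\deg P}$. Consequently the degree-$n$ component of $u-P(v)$ can vanish only if $m\mid n$ and $\bar u=c\,\bar v^{\,n/m}$ for some $c\in\k^*$, and conversely if these two conditions hold then already the monomial $P(T)=cT^{\,n/m}$ gives $\deg(u-P(v))<n$. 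So it suffices to prove that $m\mid n$ and $\bar u\in\k^*\bar v^{\,n/m}$.

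First I would establish the soft half, that $\bar u$ and $\bar v$ are scalar multiples of powers of one common homogeneous form. When $\car(\k)=0$ this follows from the Jacobian: since $f$ is an automorphism, $J:=u_xv_y-u_yv_x\in\k^*$, and the degree-$(n+m-2)$ component of $J$ is exactly $\bar u_x\bar v_y-\bar u_y\bar v_x$; as $n+m-2\ge 2>0$ this component must vanish, so $\bar u,\bar v$ are algebraically dependent over $\k$, and two dependent homogeneous forms in two variables are scalar powers of a common form. Writing $g=\gcd(n,m)$, $n'=n/g$, $m'=m/g$, this gives $\bar u=a\,w^{\,n'}$ and $\bar v=b\,w^{\,m'}$ with $\gcd(n',m')=1$ and $\deg w=g$. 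In positive characteristic the Jacobian criterion for dependence fails, so I would instead argue uniformly and geometrically: a general fibre of the isomorphism $f$ is a single point, so the projective closures of $\{u=\lambda\}$ and $\{v=\mu\}$, of degrees $n$ and $m$, meet by B\'ezout in $nm$ points with exactly one lying in $\A^2_\k$; the residual intersection multiplicity $nm-1$ is forced onto the line at infinity, where the curves meet $\{\bar u=0\}$ and $\{\bar v=0\}$ respectively, and this concentration again yields the common-power form above.

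The hard part will be the arithmetic half, namely $m'=1$, equivalently $m\mid n$; the dependence argument does not see this, as it does not by itself rule out, say, $\bar u=w^3$, $\bar v=w^2$. This is precisely the degree-divisibility lying at the heart of the Jung--van der Kulk theorem, and I expect it to be the main obstacle. I would derive it from the structure theorem --- $\Aut_\k(\k[x,y])$ is generated by affine and triangular automorphisms $(x,y)\mapsto(ax+P(y),by+c)$ --- by induction on the length of a reduced expression for $f$ (equivalently, on $n+m$): the content is that for a non-affine automorphism the leading forms of its two coordinates are proportional to powers of a single common linear form, with the smaller degree dividing the larger, and the delicate point is to control how these leading forms transform when one prepends a triangular factor. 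Granting $m\mid n$, the form of the second paragraph forces $m'=1$, hence $w=\bar v/b$ and $\bar u=a(\bar v/b)^{\,n/m}=c\,\bar v^{\,n/m}$; taking $P(T)=cT^{\,n/m}$ then gives $\deg(u-P(v))<\deg u$, which completes the proof.
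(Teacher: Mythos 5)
Your proposal is correct and rests on the same foundation as the paper's proof: the paper simply invokes the Jung--van der Kulk tameness theorem and cites \cite[Corollary~5.1.6]{Essen2000} for exactly the statement you isolate as the ``hard part'' (that for a non-affine automorphism the smaller coordinate degree divides the larger and the leading form of $u$ is a scalar multiple of a power of that of $v$), which is proved there by the induction on tame word length you describe. Your first-paragraph reduction is correct and the characteristic-zero Jacobian argument is fine; the only under-justified step is the positive-characteristic B\'ezout detour (sharing intersection at infinity does not by itself force the two leading forms to be powers of a common form), but that whole paragraph is redundant once the length induction is carried out, since it already yields the common-power statement in all characteristics.
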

\begin{proof}
By van der Kulk's Theorem all automorphisms 
of $\k[x, y]$ are tame \cite{Jung1942,Kulk1953}. 
The statement is then a direct consequence of
\cite[Corollary~5.1.6]{Essen2000}.
\end{proof}

The following result is needed in the sequel. When the characteristic of $\k$ is zero, and $p=t$, it follows from \cite[Theorem~4]{FurterLength}. We adapt here the proof of \cite{FurterLength} for our purpose.

\begin{lemma}\label{Lemm:DegVariables}
Let $\k$ be a field, let $p\in \k[t]$ be an irreducible element and let $\k_p=\k[t]/(p)$ be the corresponding residue field.
If $v\in \k[t,x,y]$ is a variable of the $\k(t)$-algebra $\k(t)[x,y]$, then its class in $\k_p[x,y]$ is an element which belongs to $\k_p[w]\subset \k_p[x,y]$ for some variable $w$ of the $\k_p$-algebra $\k_p[x,y]$.
\end{lemma}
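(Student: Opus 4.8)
The plan is to induct on the $(x,y)$-degree $d := \deg_{x,y} v$ of $v$, regarded over $\k(t)$, and to transfer the property of ``lying in $\k_p[w]$'' from a well-chosen complement of $v$ to $v$ itself by combining Lemma~\ref{Lem:TwoVariablesModulo} with Lemma~\ref{Lem:Dominant}. For $g\in\k[t,x,y]$ write $g_0\in\k_p[x,y]$ for its class modulo $p$. Since $v$ is a variable of $\k(t)[x,y]$, there is $u\in\k(t)[x,y]$ with $\k(t)[u,v]=\k(t)[x,y]$; clearing denominators by a scalar in $\k[t]\setminus\{0\}$ (which changes neither the $(x,y)$-degree nor the property of being a complement over $\k(t)$) I may assume $u\in\k[t,x,y]$. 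As $v$ is non-constant we have $d\ge 1$; if $d=1$, then $v_0$ is either constant or a linear variable, so $v_0\in\k_p[w]$ for a suitable variable $w$, settling the base case.

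For the inductive step assume $d>1$. First I would pick a complement $u\in\k[t,x,y]$ of minimal $(x,y)$-degree. If this degree were $\ge d$, applying Lemma~\ref{Lemm:JungExplicit} to the $\k(t)$-automorphism $x\mapsto u$, $y\mapsto v$ (legitimate since $\deg u\ge\deg v=d>1$) produces a polynomial $P$ over $\k(t)$ with $\deg(u-P(v))<\deg u$; after clearing denominators this gives a complement of strictly smaller degree, contradicting minimality. Hence $\deg u<d$. Next I would normalise $u$ so that $u_0$ is non-constant: writing $u=c+\mu$ with $c\in\k[t]$ the $(x,y)$-constant part and $\mu$ the positive-degree part ($\mu\neq 0$, as $u$ is a non-constant complement), I replace $u$ by $\mu/p^{k}$, where $p^{k}$ is the exact power of $p$ dividing the content of $\mu$. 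Since $c\in\k(t)$ and $p^k\in\k(t)^\ast$, this is again a complement of the same degree $<d$, and now $u_0$ is a nonzero polynomial without constant term, hence non-constant.

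With this $u$ fixed, consider the Jacobian $\nu=\frac{\partial u}{\partial x}\frac{\partial v}{\partial y}-\frac{\partial u}{\partial y}\frac{\partial v}{\partial x}\in\k[t]\setminus\{0\}$. If $p\nmid\nu$, then Lemma~\ref{Lem:TwoVariablesModulo}\eqref{pnotdivnu} shows that $v_0$ is itself a variable of $\k_p[x,y]$, and we are done with $w=v_0$. If $p\mid\nu$, then Lemma~\ref{Lem:TwoVariablesModulo}\eqref{pdivnu} shows that $u_0$ and $v_0$ are algebraically dependent over $\k_p$. Since $u$ is a complement it is a variable of $\k(t)[x,y]$ of $(x,y)$-degree $<d$, so the induction hypothesis yields a variable $w$ of the $\k_p$-algebra $\k_p[x,y]$ with $u_0\in\k_p[w]$; by the normalisation $u_0\in\k_p[w]\setminus\k_p$. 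Applying Lemma~\ref{Lem:Dominant} over the field $\k_p$, with this $w$, the element $u_0\in\k_p[w]\setminus\k_p$ and the polynomial $v_0$, the implication \eqref{vkw3}$\Rightarrow$\eqref{vkw1} gives $v_0\in\k_p[w]$, completing the induction.

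The main obstacle is the middle paragraph: producing a single complement $u$ whose reduction $u_0$ is simultaneously of degree $<d$ and non-constant. The degree bound comes from the Jung--van der Kulk structure via Lemma~\ref{Lemm:JungExplicit}, but it is the interplay with reduction modulo $p$ — clearing exactly the right power of $p$ so that $u_0\notin\k_p$ without spoiling the complement property or raising the degree — that makes everything fit together, for this is precisely what allows Lemma~\ref{Lem:Dominant} to fire and propagate membership in $\k_p[w]$ from $u_0$ to $v_0$.
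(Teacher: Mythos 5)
Your proof is correct and follows essentially the same route as the paper: the same three ingredients (Lemma~\ref{Lemm:JungExplicit} to shrink the complement, Lemma~\ref{Lem:TwoVariablesModulo} for the dichotomy on $p\mid\nu$, and Lemma~\ref{Lem:Dominant} to transfer membership in $\k_p[w]$ from $u_0$ to $v_0$), with the same normalisation of $u$ so that $u_0\notin\k_p$. The only difference is bookkeeping: you first minimise the degree of the complement and then run a single induction on $\deg v$, whereas the paper runs a lexicographic induction on the pair $(\deg v,\deg u)$ and absorbs the Jung--van der Kulk reduction into the inductive step; both are valid.
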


\begin{proof}
Let $f\in \Aut_{\k(t)}(\k(t)[x,y])$ such that $f(x)=v$, and let us define $u=f(y)$. We denote by $v_0\in \k_p[x,y]$ the class of $v$ and will use the degree of polynomials in $x,y$ with coefficients in $\k(t)$ or $\k_p$.

If $\deg(v)=1$, then $\deg(v_0)\le 1$. If $v_0\in \k_p$ the result follows by taking any variable for $w$, for instance $w=x$. Otherwise, $v_0=\alpha x+\beta y+\gamma$ for some $\alpha,\beta,\gamma\in \k_p$ with $(\alpha,\beta)\not=(0,0)$. This implies that $w=\alpha x+\beta y$ is a variable, as it is the component of an element of $\GL_2(\k_p)$, and the result follows.

We can thus assume that $\deg(v)>1$ and prove the result by induction on the pair $(\deg(v),\deg(u))$, ordered lexicographically.

$(i)$ If $\deg(u)\ge \deg(v)$, then there exists a polynomial $P\in \k(t)[X]$ such that $\deg(u-P(v))<\deg(u)$ (Lemma~\ref{Lemm:JungExplicit}). We can thus apply induction hypothesis to $(u-P(v),v)$, since $\k(t)[u,v]=\k(t)[u-P(v),v]$, 
and obtain the result.

$(ii)$ If $\deg(u)<\deg(v)$, we first replace $u$ with $u-\lambda$ for some $\lambda\in \k(t)$ and assume that $u\in \k(t)[x,y]$ is a polynomial in $x,y$ with no constant term. We then replace $u$ with $qu$ for some $q\in \k(t)^*$ and assume that $u\in \k[t][x,y]$ and the greatest common divisor in $\k[t]$ of the coefficients of $u$ (as a polynomial in $x,y$) is equal to $1$. One can then define the class $u_0\in \k_p[x,y]$ of $u$, which is not equal to zero. Moreover, $u_0$ does not belong to $\k_p$, since $u_0$ has no constant term.

If $v_0$ is a variable of the $\k_p$-algebra $\k_p[x,y]$, then we are done. Otherwise, $u_0,v_0$ are algebraically dependent over $\k_p$ (Lemma~\ref{Lem:TwoVariablesModulo}).

Since the pair $(\deg(v),\deg(u))$ is smaller than $(\deg(u),\deg(v))$, we can apply induction hypothesis and get a variable $w\in \k_p[x,y]$ such that $u_0\in  \k_p[w]$. The fact that $u_0$ and $v_0$ are algebraically dependent over $\k_p$ and that $u_0\not\in \k_p$ imply that $v_0\in \k_p[w]$ (Lemma~\ref{Lem:Dominant}).
\end{proof}

We finish this section with several results relating variables and $\A^1$-bundles.

\begin{lemma}\label{Lem:PolynomTwoVariables}
Let $\k$ be a field and let $P\in \k[x,y]$. Then, the following conditions are equivalent:
\begin{enumerate}[label=$(\arabic*)$, ref=\arabic*] 
\item\label{Pvkxy}
The polynomial $P$ is a variable of the $\k$-algebra $\k[x,y]$.
\item\label{Pktxy}
The $\k[t]$-algebra $\k[t,x,y]/(P-t)$ is a polynomial ring in one variable over $\k[t]$.
\item\label{Pktxyfield}
The $\k(t)$-algebra $\k(t)[x,y]/(P-t)$ is a polynomial ring in one variable over $\k(t)$.
\item\label{Ptrivial}
The morphism $\A^2_\k\to \A^1_\k$ given by $(x,y)\mapsto P(x,y)$ is a trivial $\A^1$-bundle.
\item\label{Ploctrivial}
The morphism $\A^2_\k\to \A^1_\k$ given by $(x,y)\mapsto P(x,y)$ is a trivial $\A^1$-bundle over some dense open subset $U\subset \A^1_\k$.
\end{enumerate}
\end{lemma}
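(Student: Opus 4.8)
The plan is to prove the cycle $(1)\Rightarrow(2)\Rightarrow(3)\Rightarrow(1)$ together with the extra links $(1)\Leftrightarrow(4)$, $(4)\Rightarrow(5)$ and $(5)\Rightarrow(3)$, so that only one implication carries real content. The equivalence $(1)\Leftrightarrow(2)$ is exactly Lemma~\ref{Lem:CharacVariable} applied with $R=\k$, $n=2$ and $v=P$, since $\k[x,y][t]/(P-t)=\k[t,x,y]/(P-t)$. The equivalence $(1)\Leftrightarrow(4)$ is a direct unwinding of definitions: an isomorphism $\A^2_\k\to\A^1_\k\times\A^1_\k$ over $\A^1_\k$ whose first component is $P$ is the same datum as a $w\in\k[x,y]$ with $\k[x,y]=\k[P,w]$, i.e.\ a coordinate partner of $P$. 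For $(2)\Rightarrow(3)$ I localize at $\k[t]\setminus\{0\}$: localization commutes with quotients, so $(\k[t,x,y]/(P-t))\otimes_{\k[t]}\k(t)=\k(t)[x,y]/(P-t)$, while by $(2)$ this equals $\k[t][w]\otimes_{\k[t]}\k(t)=\k(t)[w]$. For $(4)\Rightarrow(5)$ one takes $U=\A^1_\k$, and $(5)\Rightarrow(3)$ is the restriction to the generic point of $\A^1_\k$, which lies in every dense open $U$. This leaves the single hard implication $(3)\Rightarrow(1)$.

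For $(3)\Rightarrow(1)$ I would work with the $\k[t]$-algebra $A=\k[t,x,y]/(P-t)\cong\k[x,y]$ (the isomorphism sending $t\mapsto P$), which is regular and flat over the principal ideal domain $\k[t]$, and whose fibre over the generic point of $\Spec\k[t]$ is $\A^1$ by hypothesis $(3)$. By Lemma~\ref{Lem:CharacVariable} it suffices to show that $A$ is a polynomial ring in one variable over $\k[t]$, i.e.\ that the $\A^1$-fibration $P\colon\A^2_\k\to\A^1_\k$ is a trivial bundle over \emph{all} of $\A^1_\k$. Spreading out the generic trivialization already makes it a trivial bundle over a dense open $U=\A^1_\k\setminus F$ with $F$ finite. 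Since $\A^1_\k$ is affine with trivial Picard group, the relevant non-abelian $H^1$ of the sheaf of affine transformations vanishes, so a Zariski-locally trivial $\A^1$-bundle over $\A^1_\k$ is automatically globally trivial; hence everything reduces to establishing local triviality at each of the finitely many points of $F$.

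To analyze the fibre over the closed point corresponding to an irreducible $p\in\k[t]$, I would reduce modulo $p$ and compare with the residue field $\k_p$. Choosing a coordinate partner $w$ of the generic fibre and clearing denominators so that $w\in\k[t,x,y]$, the Jacobian $\nu=\Jac_{x,y}(P,w)\in\k[t]\setminus\{0\}$ controls the behaviour: by Lemma~\ref{Lem:TwoVariablesModulo}, for every $p\nmid\nu$ the reductions of $P$ and $w$ already generate $\k_p[x,y]$, so the fibre over such $p$ is $\A^1$ and the bundle is trivial there. For the finitely many primes $p\mid\nu$, Lemma~\ref{Lemm:DegVariables} shows that the reduction of $P$ still lies in $\k_p[\omega]$ for some variable $\omega$ of the $\k_p$-algebra $\k_p[x,y]$, and by the factorial closedness of $\k_p[\omega]$ in $\k_p[x,y]$ (Lemma~\ref{Lemm:ProdKxKxy}) the reduced special fibre is a union of the coordinate lines $\{\omega=c_j\}$. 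The hard part will be precisely these \textquotedblleft bad\textquotedblright{} primes $p\mid\nu$, where the chosen generic coordinate degenerates: one must exclude that the special fibre genuinely breaks up, i.e.\ that the reduction of $P$ has $\omega$-degree $>1$ or acquires multiplicity. I expect to rule this out by comparing with the generic fibre — which is a single reduced $\A^1$ — using the irreducibility of $P-t$ over $\k(t)$ and the UFD structure of $\k[x,y]$ to force the special fibre to remain one reduced copy of $\A^1$. Once every fibre is known to be a reduced $\A^1$, local triviality over each localization of $\k[t]$ at $p$ follows from the structure theory of $\A^1$-fibrations over a discrete valuation ring, and the global triviality established above yields $(2)$, hence $(1)$.
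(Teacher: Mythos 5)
Your arrangement of the easy implications is fine and matches the paper: $(\ref{Pvkxy})\Leftrightarrow(\ref{Pktxy})$ is Lemma~\ref{Lem:CharacVariable}, $(\ref{Pvkxy})\Leftrightarrow(\ref{Ptrivial})$ is definitional, and the localisation/generic-point links are correct. The problem is the one implication you yourself flag as carrying the content, $(\ref{Pktxyfield})\Rightarrow(\ref{Pvkxy})$: what you offer there is not a proof but a programme, and the programme has two genuine gaps. First, your mod-$p$ analysis needs a ``coordinate partner'' $w$ with $\k(t)[P,w]=\k(t)[x,y]$, i.e.\ it needs $P-t$ to be a \emph{variable} of $\k(t)[x,y]$. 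Hypothesis~$(\ref{Pktxyfield})$ only says the quotient $\k(t)[x,y]/(P-t)$ is a polynomial ring; passing from that to ``$P-t$ is a variable'' is the epimorphism (Abhyankar--Moh) problem over $\k(t)$, which is false in general in positive characteristic (the paper's own Lemmas~\ref{Lemm:EmbCharP} and~\ref{Lemm:FormofA1} are built on such failures). So the pair $(P-t,w)$ to which you want to apply Lemma~\ref{Lem:TwoVariablesModulo} need not exist. Second, even granting that pair, your treatment of the primes $p\mid\nu$ is an admission rather than an argument: Lemma~\ref{Lemm:DegVariables} only places the reduction of $P$ in $\k_p[\omega]$ for some variable $\omega$, and nothing you cite forces it to have degree~$1$ in $\omega$ or to be squarefree. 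Indeed Section~4 of the paper exhibits variables of the $\k(t)$-algebra $\k(t)[x,y]$ lying in $\k[t,x,y]$ whose reduction at $t=0$ is $\mu x^m(x-\lambda)$ with $m\ge 1$ --- a disjoint union of $m+1$ lines --- so the degeneration you need to exclude really does occur for the objects your lemmas control, and ruling it out in the present two-variable situation is exactly the hard content you have not supplied.

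The paper's proof of $(\ref{Ploctrivial})\Rightarrow(\ref{Pvkxy})$ takes a different and essentially unavoidable route: it never looks at the bad fibres at all. It picks a $\k$-rational point of $U$ (after translating, the point $0$), observes that the fibre $\Gamma=\{P=0\}$ is not merely abstractly isomorphic to $\A^1_\k$ but is carried to a line by a \emph{birational} map of $\A^2_\k$ (coming from the trivialisation over $U$), and then invokes \cite[Proposition~2.29]{BlancFurterHemmig} to upgrade this birational rectification to an automorphism of $\A^2_\k$ sending $\Gamma$ to $\{x=0\}$ --- this is what replaces Abhyankar--Moh--Suzuki and makes the argument characteristic-free. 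When $\k$ is finite and $U$ has no $\k$-point, a Galois cohomology descent ($H^1(G,(\k')^\ast\ltimes V_d)=\{1\}$) finishes the proof. Both of these ingredients --- the rectifiability input and the descent step --- are absent from your proposal, and without something playing their role the implication does not close.
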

\begin{proof}
$(\ref{Pvkxy})\Leftrightarrow(\ref{Pktxy})$ follows from Lemma~\ref{Lem:CharacVariable}.

$(\ref{Pvkxy})\Leftrightarrow(\ref{Ptrivial})$: By definition, $(\ref{Pvkxy})$ is equivalent to the existence of $f\in \Aut_\k(\k[x,y])$ such that $f(x)=P$. As $f=\varphi^*$ for some $\varphi\in \Aut(\A^2_\k)$, this is equivalent to ask for $\varphi\in \Aut(\A^2_\k)$ that $\pr_x \circ \varphi$ is the map $(x,y)\mapsto P(x,y)$, where $\pr_x\colon \A^2_\k\to \A^1_\k$ is given by $(x,y)\mapsto x$. This yields the equivalence $(\ref{Pvkxy})\Leftrightarrow(\ref{Ptrivial})$.

$(\ref{Pktxy})\Rightarrow(\ref{Pktxyfield})$ is trivially true.

$(\ref{Pktxyfield})\Rightarrow (\ref{Ploctrivial})$: Assertion $(\ref{Pktxyfield})$ corresponds to say that the generic fibre of $(x,y)\mapsto P(x,y)$ is isomorphic to $\A^1_{\k(t)}$. This yields $(\ref{Ploctrivial})$.

$(\ref{Ploctrivial})\Rightarrow (\ref{Pvkxy}):$ 
Assume that the subset $U$ given in $(\ref{Ploctrivial})$
contains a $\k$-rational point.
Replacing $P$ with $P+\lambda$, $\lambda\in \k$, 
one can assume that $0$ belongs to the open subset 
$U$. One then observes that the curve 
$\Gamma\subset \A^2_\k$ given by $P=0$ is isomorphic to $\A^1_\k$ 
and equivalent to a line by a birational map of $\A^2_\k$, hence 
can be contracted by a birational map of $\A^2_\k$. 
By \cite[Proposition 2.29]{BlancFurterHemmig}, there exists 
an automorphism $\varphi\in \Aut(\A^2_\k)$ which sends $\Gamma$ 
onto the line given by $x=0$. This implies that $P$ is a variable of 
the $\k$-algebra $\k[x,y]$.

If $U$ contains no $\k$-rational point, then $\k$
is a finite field and thus it is perfect. For a finite Galois extension
$\k' \supset \k$ the subset $U$ contains a $\k'$-rational point. By
the argument above, $P$ is a variable of the $\k'$-algebra $\k'[x, y]$
and hence $\k'[x, y] = \k'[P, Q]$ for some $Q \in \k'[x, y]$.
Since $P$ is a polynomial with coefficients in $\k$,
it is fixed under
the action of the Galois group $G = \Gal(\k' / \k)$ on $\k[x, y] = \k'[P, Q]$.
For each $\sigma \in G$, there exists $(a_\sigma, b_\sigma) \in (\k')^\ast \ltimes \k'[T]$ with
\[
	\sigma(Q) = a_\sigma Q + b_\sigma(P).
\]
We can then find $d\ge 0$ such that  $\{b_\sigma\mid \sigma\in G\}$ is contained in the finite dimensional $\k'$-vector subspace $V_d=\{f\in \k'[T]\mid \deg(P)\le d\}\subset \k'[T]$. 
Thus $\sigma \mapsto (a_\sigma, b_\sigma)$ defines
an element of $H^1(G, (\k')^\ast \ltimes V_d)$.
As $H^1(G, (\k')^\ast )=\{1\}$ and $H^1(G,V_d)=\{1\}$ \cite[Proposition~1, 2, Chp. X]{Serre68}, we have $H^1(G, (\k')^\ast \ltimes V_d)=\{1\}$. The fact that $(a_\sigma, b_\sigma)$ is a trivial cocycle corresponds to the existence of
a polynomial $Q_0 \in \k[x, y]$ such that $\k[P, Q_0] = \k[x, y]$. 
This implies that $P$ is a variable of the $\k$-algebra $\k[x, y]$.
\end{proof}

We recall the following classical result:
\begin{lemma}\label{Lem:BCWdim1}
	Let $\k$ be a field, let $Z$ be an  affine variety over $\k$,
	all of its irreducible components being surfaces,
	let $U \subseteq \A^1_{\k}$ be a dense open subset and let 
	$\pi \colon Z \to U$ be a dominant morphism. 
	Then, the following conditions are equivalent:
	\begin{enumerate}[label=$(\arabic*)$, ref=\arabic*]
		\item\label{PPPtrivialA1bundle}
			The morphism $\pi\colon Z\to U$ is a trivial $\A^1$-bundle.
		\item\label{PPPloctrivialA1bundle}
			The morphism $\pi\colon Z\to U$ is a locally 
			trivial $\A^1$-bundle.
		\item\label{PPPeachfibA1}
			For each maximal ideal $\mathfrak{m} \subset \k[U]$, 
			the fibre $\pi^{-1}(\mathfrak{m})\subset Z$ is isomorphic 
			to $\A^1_{\kappa(\mathfrak{m})}$ and the generic fibre
			of $\pi$ is isomorphic to $\A^1_{\k(t)}$.
	\end{enumerate}
\end{lemma}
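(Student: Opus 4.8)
The plan is to prove the chain of implications $(\ref{PPPtrivialA1bundle})\Rightarrow(\ref{PPPloctrivialA1bundle})\Rightarrow(\ref{PPPeachfibA1})\Rightarrow(\ref{PPPtrivialA1bundle})$. The first two implications are immediate: a trivial $\A^1$-bundle is locally trivial, and a locally trivial $\A^1$-bundle has all its closed fibres isomorphic to $\A^1$ over the corresponding residue field and its generic fibre isomorphic to $\A^1_{\k(t)}$. All the content therefore lies in $(\ref{PPPeachfibA1})\Rightarrow(\ref{PPPtrivialA1bundle})$. Throughout I would use that $\k[U]$ is a principal ideal domain, being a localisation of the principal ideal domain $\k[t]$; in particular $\Pic(\k[U])=0$ and $H^1(U,\mathcal{O}_U)=0$, the latter since $U$ is affine.

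So assume $(\ref{PPPeachfibA1})$. First I would establish that $\pi$ is flat. Each irreducible component $Z_i$ of $Z$ is a surface, so it cannot be contained in a fibre of $\pi$, which has dimension one by hypothesis; hence $\pi(Z_i)$ is not a single closed point, and being irreducible it is dense in the curve $U$, that is, $Z_i$ dominates $U$. Consequently the inclusion $\k[U]\hookrightarrow \k[Z_i]$ realises $\k[Z_i]$ as a torsion-free $\k[U]$-module, and since $Z$ is reduced we have an embedding $\k[Z]\hookrightarrow \prod_i \k[Z_i]$, so $\k[Z]$ is torsion-free over $\k[U]$ as well. As $\k[U]$ is a principal ideal domain, torsion-free equals flat, so $\pi$ is flat. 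Moreover $\k[Z]$ is a finitely generated algebra over the Noetherian ring $\k[U]$, hence finitely presented.

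Next I would upgrade flatness to smoothness. All fibres of $\pi$, closed and generic, are isomorphic to an affine line over the respective residue field by $(\ref{PPPeachfibA1})$, and an affine line remains an affine line after any extension of the base field; thus all geometric fibres of $\pi$ are smooth. A flat, finitely presented morphism with smooth geometric fibres is smooth, so $\pi$ is smooth of relative dimension one and $\Omega_{Z/U}$ is locally free of rank one. At this point I would invoke the theorem of Bass, Connell and Wright on locally polynomial algebras: a finitely presented, flat $\k[U]$-algebra with locally free module of relative differentials of rank one and all geometric fibres affine lines is the symmetric algebra $\mathrm{Sym}_{\k[U]}(M)$ of a finitely generated projective $\k[U]$-module $M$ of rank one. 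Since $\k[U]$ is a principal ideal domain, $M$ is free of rank one, whence $\k[Z]\cong \mathrm{Sym}_{\k[U]}(M)\cong \k[U][x]$; that is, $\pi$ is a trivial $\A^1$-bundle, giving $(\ref{PPPtrivialA1bundle})$.

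I expect the main obstacle to be the correct and careful verification of the hypotheses of the Bass--Connell--Wright theorem, in particular the flatness of $\pi$ (which rests on the dimension count forcing every component to dominate $U$) and the fact that one genuinely needs the condition on the \emph{geometric} fibres; this is why it is used that affine lines persist under base field extension, and it is also the point where working over an arbitrary, possibly imperfect, field $\k$ must be handled with care. As a by-product, the same ingredients yield the intermediate statement $(\ref{PPPloctrivialA1bundle})\Rightarrow(\ref{PPPtrivialA1bundle})$ directly: a locally trivial affine line bundle over $U$ has an associated line bundle, which is trivial as $\Pic(\k[U])=0$, reducing its transition maps to translations; these define a $\G_a$-torsor classified by $H^1(U,\mathcal{O}_U)=0$, so the bundle is globally trivial.
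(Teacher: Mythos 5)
Your proof follows essentially the same route as the paper's: the two easy implications, flatness of $\pi$ deduced from the fact that every two‑dimensional irreducible component must dominate the curve $U$, an appeal to the Bass--Connell--Wright circle of ideas to obtain a symmetric algebra of a projective module, and finally triviality because $\k[U]$ is a principal ideal domain. The outline is correct, and your closing remark that $(2)\Rightarrow(1)$ also follows directly from $\Pic(\k[U])=0$ and $H^1(U,\mathcal{O}_U)=0$ is a valid alternative for that sub-implication.

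The one point you should tighten is the citation at the crux. The theorem of Bass, Connell and Wright takes as hypothesis that $B_{\mathfrak m}$ is a polynomial ring over $A_{\mathfrak m}$ for every maximal ideal $\mathfrak m$ (``locally polynomial algebras are symmetric algebras''); it does not take the fibrewise condition as input. The genuinely non-trivial step --- passing from ``every fibre, including the generic one, is an affine line over its residue field'' to ``$\k[Z]_{\mathfrak m}$ is a polynomial ring over $\k[U]_{\mathfrak m}$'' --- is a separate result, and the paper cites Asanuma's Corollary~3.2 (flat, finitely generated algebras over a Noetherian ring all of whose fibres are one-variable polynomial rings are locally polynomial) for exactly this. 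Your detour through smoothness (geometric fibres are affine lines, hence $\pi$ is smooth and $\Omega_{Z/U}$ is invertible) is correct but does not by itself bridge that gap: a smooth affine morphism with all fibres isomorphic to $\A^1$ still requires an Asanuma-type argument to be locally polynomial, so the hybrid statement you attribute to Bass--Connell--Wright is really the concatenation of Asanuma's theorem with theirs. Replace the single citation by the two-step one and the proof matches the paper's.
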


\begin{proof}
	The implications $\eqref{PPPtrivialA1bundle} \Rightarrow
	\eqref{PPPloctrivialA1bundle} \Rightarrow \eqref{PPPeachfibA1}$
	are obvious. 
	Assume $\eqref{PPPeachfibA1}$ holds.
	Since each irreducible component of $Z$ has 
	dimension two, it follows that each of these irreducible components
	is mapped dominantly onto $U$ via $\pi$. 
	Thus $\pi$ is flat. By \cite[Corollary~3.2]{Asanuma87}
	it follows now from~$\eqref{PPPeachfibA1}$ 
	that $\k[Z]_{\mathfrak{m}}$ is a polynomial ring
	in one variable over $\k[U]_{\mathfrak{m}}$ for all
	maximal ideals $\mathfrak{m} \subset \k[U]$. Hence, by 
	\cite{BCW}, the morphism $\pi$ is a vector bundle 
	with respect to the Zariski topology and since
 	$\k[U]$ is a principal ideal domain, $\pi$
	is a trivial $\A^1$-bundle.
\end{proof}

\begin{lemma}\label{Lemm:ZinA3U}
Let $\k$ be a field, $P\in \k[t,x,y]$ be a polynomial which is a variable of the 
$\k(t)$-algebra $\k(t)[x,y]$, let $U\subset \A^1_\k=\Spec(\k[t])$ be a 
dense open subset, let $Z\subset U\times\A^2_\k=\Spec(\k[U][x,y])$  be the 
hypersurface given by $P = 0$ 
and let $\pi\colon Z\to U$ be the morphism $(t,x,y)\mapsto t$. Then, 
the following conditions are equivalent:
\begin{enumerate}[label=$(\roman*)$, ref=\roman*]
\item\label{PPVarktU}
$P$ is a variable of the $\k[U]$-algebra $\k[U][x,y]$;
\item\label{PPisoA2fibrU}
There is an isomorphism $\varphi\colon  U\times \A^1_\k\iso Z$ such that $\pi\varphi$ is the projection $(t,x)\mapsto t$;
\item\label{PPtrivialA1bundleU}
The morphism $\pi\colon Z\to U$ is a trivial $\A^1$-bundle;
\item\label{PPloctrivialA1bundleU}
The morphism $\pi\colon Z\to U$ is a locally trivial $\A^1$-bundle;
\item\label{PPeachfibA1U}
For each maximal ideal $\mathfrak{m} \subset \k[U]$, the fibre $\pi^{-1}(\mathfrak{m})\subset Z$ is isomorphic to $\A^1_{\kappa(\mathfrak{m})}$.
\end{enumerate}
\end{lemma}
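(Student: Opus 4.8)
The plan is to run the cyclic chain of implications $\eqref{PPVarktU}\Rightarrow\eqref{PPisoA2fibrU}\Rightarrow\eqref{PPtrivialA1bundleU}\Rightarrow\eqref{PPloctrivialA1bundleU}\Rightarrow\eqref{PPeachfibA1U}\Rightarrow\eqref{PPVarktU}$. As a preliminary, I would record that $Z$ is an affine surface (a nonzero hypersurface in the affine threefold $U\times\A^2_\k$), that $\pi$ is dominant, and that its generic fibre is $\A^1_{\k(t)}$: since $P$ is a variable of $\k(t)[x,y]$, one has $\k(t)[x,y]/(P)\iso\k(t)[w]$ for a complementary variable $w$. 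This is exactly what is needed to feed $\pi\colon Z\to U$ into Lemma~\ref{Lem:BCWdim1}.

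The implications between the bundle conditions are routine. For $\eqref{PPVarktU}\Rightarrow\eqref{PPisoA2fibrU}$, write $\k[U][x,y]=\k[U][P,Q]$ for a complementary variable $Q$; then $\k[Z]=\k[U][x,y]/(P)=\k[U][\bar Q]$ is a polynomial ring in one variable over $\k[U]$, and the resulting isomorphism $U\times\A^1_\k\iso Z$ is compatible with the projections to $U$. The equivalence $\eqref{PPisoA2fibrU}\Leftrightarrow\eqref{PPtrivialA1bundleU}$ is merely the definition of a trivial $\A^1$-bundle, while $\eqref{PPtrivialA1bundleU}\Leftrightarrow\eqref{PPloctrivialA1bundleU}\Leftrightarrow\eqref{PPeachfibA1U}$ follow from Lemma~\ref{Lem:BCWdim1} applied to $\pi$, the preliminary observation guaranteeing that the generic fibre is $\A^1_{\k(t)}$ so that $\eqref{PPeachfibA1U}$ matches condition~(3) there.

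The crux is $\eqref{PPeachfibA1U}\Rightarrow\eqref{PPVarktU}$. By Lemma~\ref{Lem:CharacVariable} it is enough to show that $\k[U][x,y]$ is a polynomial ring in one variable over $\k[U][P]$, i.e.\ that the morphism $\Phi\colon U\times\A^2_\k\to U\times\A^1_\k$, $(t,x,y)\mapsto(t,P(t,x,y))$, is a trivial $\A^1$-bundle. First I would verify that every closed fibre of $\Phi$ is an affine line. Fix a closed point of $U$ with residue field $\k_p=\k[t]/(p)$; by Lemma~\ref{Lemm:DegVariables} the class $P_0\in\k_p[x,y]$ of $P$ lies in $\k_p[w]$ for some variable $w$ of $\k_p[x,y]$, say $P_0=g(w)$ with $g\in\k_p[T]$ and $\k_p[x,y]=\k_p[w,z]$. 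Hypothesis $\eqref{PPeachfibA1U}$ says $\Spec\bigl((\k_p[w]/(g))[z]\bigr)\cong\A^1_{\k_p}$; as $\A^1_{\k_p}$ is reduced, irreducible and has $\k_p$ algebraically closed in its coordinate ring, this forces $\deg g=1$, so $P_0$ is itself a coordinate of $\k_p[x,y]$. Hence every level set of $P_0$ is an affine line, and therefore every closed fibre of $\Phi$ is an $\A^1$; the generic fibre is too, again because $P$ is a variable of $\k(t)[x,y]$.

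With all fibres of $\Phi$ affine lines, I would use miracle flatness (regular source and target, equidimensional one-dimensional fibres) to see that $\Phi$ is flat and of finite presentation, and then invoke the theorem of Bass--Connell--Wright \cite{BCW} to conclude that $\Phi$ is a Zariski-locally trivial $\A^1$-bundle over $\k[U][s]$. Since $\k[U]$ is a principal ideal domain, $\Pic(\k[U][s])=\Pic(\k[U])=0$, so this bundle is globally trivial, which by Lemma~\ref{Lem:CharacVariable} is precisely $\eqref{PPVarktU}$. I expect this last step to be the main obstacle: the fibre hypothesis $\eqref{PPeachfibA1U}$ controls only the single level set $P=0$, whereas being a variable constrains all level sets of $P$ at once. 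The bridge is the rigidity of Lemma~\ref{Lemm:DegVariables}, which already confines the reduction $P_0$ to a one-variable subalgebra $\k_p[w]$; the reducedness imposed by the $\A^1$-fibre hypothesis then promotes $P_0$ to a coordinate, pinning down every level set and so unlocking Bass--Connell--Wright.
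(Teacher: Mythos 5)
Your treatment of the implications $\eqref{PPVarktU}\Rightarrow\eqref{PPisoA2fibrU}\Leftrightarrow\eqref{PPtrivialA1bundleU}\Rightarrow\eqref{PPloctrivialA1bundleU}\Rightarrow\eqref{PPeachfibA1U}$, and of the equivalence of the three bundle conditions via Lemma~\ref{Lem:BCWdim1} (with the generic fibre supplied by the standing hypothesis on $P$), coincides with the paper's. Where you genuinely diverge is in closing the cycle. The paper justifies $\eqref{PPeachfibA1U}\Rightarrow\eqref{PPVarktU}$ only by citing the implication $\eqref{PPPeachfibA1}\Rightarrow\eqref{PPPtrivialA1bundle}$ of Lemma~\ref{Lem:BCWdim1}, which, taken literally, yields only $\eqref{PPeachfibA1U}\Rightarrow\eqref{PPtrivialA1bundleU}$: it says that the single hypersurface $Z=\{P=0\}$ is a trivial $\A^1$-bundle over $U$, not that $P$ itself is a coordinate of $\k[U][x,y]$. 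The passage from the former to the latter is exactly where the hypothesis that $P$ is a variable of $\k(t)[x,y]$ must enter (over a field the analogous statement is the Abhyankar--Moh theorem and fails in positive characteristic), and the paper leaves it implicit. Your argument supplies precisely this content: you reduce, via Lemma~\ref{Lem:CharacVariable}, to the triviality of the full family $\Phi\colon (t,x,y)\mapsto (t,P(t,x,y))$ over $U\times\A^1_\k$; you use Lemma~\ref{Lemm:DegVariables} to confine the reduction $P_0$ to $\k_p[w]$ and the hypothesis on the single level set $P_0=0$ to force $\deg g=1$, so that $P_0$ is itself a coordinate of $\k_p[x,y]$ and every closed fibre of $\Phi$ is an affine line. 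This is the correct key idea, and it is a more complete route than the one the paper writes down.

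One step of yours needs more care. Lemma~\ref{Lem:BCWdim1} is stated only for a base which is a dense open subset of $\A^1_\k$, so it cannot be quoted for $\Phi$; you must run the results of \cite{Asanuma87} and \cite{BCW} directly over the two-dimensional ring $\k[U][s]$. Depending on the form of the fibre criterion used to deduce that $\k[U][s][x,y]/(P-s)$ is locally polynomial, one may need the fibres of $\Phi$ over the height-one primes of $\k[U][s]$ as well, not only over the maximal ideals and the generic point. These fibres are indeed affine lines --- over a prime of the form $p\cdot\k[U][s]$ with $p\in\k[U]$ irreducible because $P_0$ is a coordinate of $\k_p[x,y]$ by your own argument, and over a prime generated by an irreducible $h\in\k[U][s]$ of positive degree in $s$ because $P$ is a variable of $L[x,y]$ for every field extension $L$ of $\k(t)$ --- but this should be stated. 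The final descent from Zariski-local triviality to global triviality is fine, since every rank-one projective module over the unique factorisation domain $\k[U][s]$ is free.
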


\begin{proof}
$(\ref{PPVarktU})\Rightarrow (\ref{PPisoA2fibrU})$:
If $P$ is a variable of the $\k[U]$-algebra $\k[U][x,y]$, there exists 
$f\in \Aut_{\k[U]}(\k[U][x,y])$ such that $f(x)=P$. The element $f$ is then equal to $\psi^*$ for some $\psi\in \Aut(U\times \A^2_\k)$ such that $\pi \psi=\pi$. Hence, $\psi(Z)$ is the closed subset of $U\times \A^2_\k$ given by $x=0$. 
Let $\theta\colon U\times\A^2_\k\to U\times\A^1_\k$ 
be the projection given by $(t,x,y)\mapsto (t,y)$. 
The composition $\theta\circ \psi$ restricts to an
isomorphism $Z \iso U\times\A^1_\k$ 
that we denote by $\varphi^{-1}$. Thus $\mathrm{pr}_1\circ \varphi^{-1}=\pi$, where $\mathrm{pr}_1\colon U\times\A^1_\k\to U$ is the projection on the first factor. 
This yields $(\ref{PPisoA2fibrU})$.

$(\ref{PPisoA2fibrU})\Leftrightarrow (\ref{PPtrivialA1bundleU})$: is the definition of a trivial $\A^1$-bundle.

$(\ref{PPtrivialA1bundleU})\Rightarrow (\ref{PPloctrivialA1bundleU})\Rightarrow (\ref{PPeachfibA1U})$ are obvious.

$(\ref{PPeachfibA1U})\Rightarrow (\ref{PPVarktU})$: 
This follows 
from the implication $\eqref{PPPeachfibA1} \Rightarrow \eqref{PPPtrivialA1bundle}$
of Lemma~\ref{Lem:BCWdim1}.

\end{proof}

\begin{corollary}\label{Cor:ZinA3}
Let $\k$ be a field, let $P\in \k[t,x,y]$ be a polynomial which is 
a variable of the $\k(t)$-algebra $\k(t)[x,y]$, let 
$Z\subset \A^3_\k=\Spec(\k[t,x,y])$  be the hypersurface given by $P = 0$ 
and let $\pi\colon Z\to \A^1_\k$ be the morphism $(t,x,y)\mapsto t$. Then, the following conditions are equivalent:
\begin{enumerate}[label=$(\roman*)$, ref=\roman*]
\item\label{PPVarkt}
$P$ is a variable of the $\k[t]$-algebra $\k[t][x,y]$;
\item\label{PPisoA2fibr}
There is an isomorphism $\varphi\colon \A^2_\k\iso Z$ such that $\pi\varphi$ is the projection $(t,x)\mapsto t$;
\item\label{PPisoA2}
There is an isomorphism $\varphi\colon \A^2_\k\iso Z$;
\item\label{PPtrivialA1bundle}
The morphism $\pi\colon Z\to \A^1_\k$ is a trivial $\A^1$-bundle;
\item\label{PPloctrivialA1bundle}
The morphism $\pi\colon Z\to \A^1_\k$ is a locally trivial $\A^1$-bundle;
\item\label{PPeachfibA1}
For each maximal ideal $\mathfrak{m} \subset \k[t]$, the fibre $\pi^{-1}(\mathfrak{m})\subset Z$ is isomorphic to $\A^1_{\kappa(\mathfrak{m})}$.
\end{enumerate}
\end{corollary}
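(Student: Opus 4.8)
The plan is to reduce almost the entire statement to Lemma~\ref{Lemm:ZinA3U} applied to the dense open subset $U=\A^1_\k$ (so that $\k[U]=\k[t]$), and then to splice in the one genuinely new condition \eqref{PPisoA2}. Under the identification $U=\A^1_\k$ the conditions \eqref{PPVarktU}, \eqref{PPisoA2fibrU}, \eqref{PPtrivialA1bundleU}, \eqref{PPloctrivialA1bundleU}, \eqref{PPeachfibA1U} of Lemma~\ref{Lemm:ZinA3U} are word for word the conditions \eqref{PPVarkt}, \eqref{PPisoA2fibr}, \eqref{PPtrivialA1bundle}, \eqref{PPloctrivialA1bundle}, \eqref{PPeachfibA1} of the corollary, so these five are already known to be equivalent. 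It therefore only remains to insert \eqref{PPisoA2}. Since \eqref{PPisoA2fibr} trivially implies \eqref{PPisoA2} (an isomorphism compatible with $\pi$ is in particular an isomorphism), the whole statement will follow once I establish the single implication \eqref{PPisoA2}$\,\Rightarrow\,$\eqref{PPeachfibA1}, which then closes the cycle \eqref{PPisoA2}$\,\Rightarrow\,$\eqref{PPeachfibA1}$\,\Rightarrow\,$\eqref{PPVarkt}$\,\Rightarrow\,$\eqref{PPisoA2fibr}$\,\Rightarrow\,$\eqref{PPisoA2}.

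For that implication I would start from an abstract isomorphism $\varphi\colon\A^2_\k\iso Z$ and consider the morphism $f=\pi\circ\varphi\colon\A^2_\k\to\A^1_\k$. Writing $\A^2_\k=\Spec(\k[x,y])$, this $f$ is simply the regular function $f\in\k[x,y]$ obtained by pulling back the coordinate $t$ along $\varphi$. The key observation is that $\varphi$ carries the fibres of $f$ isomorphically onto the fibres of $\pi$, so it suffices to prove that every closed fibre of $f$ is an affine line over its residue field. First I record that the generic fibre of $\pi$ is $\A^1_{\k(t)}$: from $Z=\{P=0\}$ one gets $\k(t)\otimes_{\k[t]}\k[Z]=\k(t)[x,y]/(P)$, which is isomorphic to $\k(t)^{[1]}$ precisely because $P$ is, by hypothesis, a variable of $\k(t)[x,y]$. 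Transporting through $\varphi$, the generic fibre of $f$ is $\A^1_{\k(t)}$ as well, that is, $\k(t)[x,y]/(f-t)\cong\k(t)^{[1]}$.

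This last equality is exactly condition \eqref{Pktxyfield} of Lemma~\ref{Lem:PolynomTwoVariables} for the polynomial $f\in\k[x,y]$. Applying that lemma, I conclude that $f$ is a variable of the $\k$-algebra $\k[x,y]$ and that $f\colon\A^2_\k\to\A^1_\k$ is a trivial $\A^1$-bundle (condition \eqref{Ptrivial}); in particular every closed fibre of $f$ is isomorphic to $\A^1$ over its residue field. Pulling this back along $\varphi$ shows that for each maximal ideal $\mathfrak{m}\subset\k[t]$ the fibre $\pi^{-1}(\mathfrak{m})$ is isomorphic to $\A^1_{\kappa(\mathfrak{m})}$, which is precisely \eqref{PPeachfibA1}. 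This completes the argument.

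The only delicate point is the step \eqref{PPisoA2}$\,\Rightarrow\,$\eqref{PPeachfibA1}, and there the entire weight is carried by Lemma~\ref{Lem:PolynomTwoVariables}. Naively one would fear having to rule out by hand the degenerate (reducible or non-reduced) special fibres of an $\A^1$-fibration on $\A^2_\k$, and over an arbitrary field---especially in positive characteristic---this is exactly the kind of behaviour that can occur for a general morphism with $\A^1$ generic fibre. What makes the proof go through painlessly is that Lemma~\ref{Lem:PolynomTwoVariables} already upgrades ``generic fibre $\cong\A^1$'' to ``variable'' (hence ``trivial $\A^1$-bundle'', hence ``all fibres $\cong\A^1$'') over any field, through the Jung--van der Kulk rectification and the Galois-descent argument carried out in its proof. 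Thus the real content of the new implication is conceptual rather than computational: one must recognise that an abstract isomorphism $\A^2_\k\cong Z$ turns the coordinate $t$ into a two-variable polynomial whose generic fibre is a line, and then invoke the right earlier result rather than reprove it.
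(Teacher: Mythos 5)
Your proposal is correct and follows essentially the same route as the paper: both reduce conditions \eqref{PPVarkt}, \eqref{PPisoA2fibr}, \eqref{PPtrivialA1bundle}, \eqref{PPloctrivialA1bundle}, \eqref{PPeachfibA1} to Lemma~\ref{Lemm:ZinA3U} with $U=\A^1_\k$, note that \eqref{PPisoA2fibr}$\Rightarrow$\eqref{PPisoA2} is trivial, and close the cycle by transporting $\pi$ through the isomorphism $\A^2_\k\iso Z$ to a polynomial $f\in\k[x,y]$ whose generic fibre is $\A^1_{\k(t)}$ and invoking Lemma~\ref{Lem:PolynomTwoVariables}. The only (immaterial) differences are that you target \eqref{PPeachfibA1} via implication \eqref{Pktxyfield}$\Rightarrow$\eqref{Ptrivial} of that lemma, whereas the paper targets \eqref{PPtrivialA1bundle} via \eqref{Ploctrivial}$\Rightarrow$\eqref{Ptrivial}.
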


\begin{proof}Applying Lemma~\ref{Lemm:ZinA3U} with $U=\A^1_\k$, we obtain the equivalence between $(\ref{PPVarkt})$-$(\ref{PPisoA2fibr})$-$(\ref{PPtrivialA1bundle})$-$(\ref{PPloctrivialA1bundle})$-$(\ref{PPeachfibA1})$.

We then observe that $(\ref{PPisoA2fibr})$ implies $(\ref{PPisoA2})$. It remains then to prove $(\ref{PPisoA2})\Rightarrow (\ref{PPtrivialA1bundle})$. As $P$ is a variable of the  $\k(t)$-algebra $\k(t)[x,y]$, the generic fibre of $\pi\colon Z\to \A^1_\k$ is isomorphic to $\A^1_{\k(t)}$, so $\pi$ is a trivial $\A^1$-bundle over some dense open subset $U\subset \A^1_\k$. The fact that $Z$ is isomorphic to $\A^2_\k$ implies then that $\pi$ is a trivial $\A^1_\k$-bundle. (Implication $(\ref{Ploctrivial})\Rightarrow(\ref{Ptrivial})$ of Lemma~\ref{Lem:PolynomTwoVariables}).
\end{proof}

\subsection{Non-trivial embeddings in positive characteristic}
In this paragraph, we recall the existence of non-trivial embeddings in positive characteristic. The family of examples that we give below seems classical (the case $\A^1_\k\hookrightarrow \A^2_\k$ with parameters equal to $1$ corresponds in particular to~\cite[Exercise 5(iii) in $\S 5$]{Essen2000}). We give the (simple) proof here for a lack of a precise reference and for self-containedness.
\begin{lemma}\label{Lemm:EmbCharP}
For each field $\k$ of characteristic $p>0$, each $a \in \k$, $b \in \k^\ast$
and each integer $q\ge 0$, the morphism
\[\begin{array}{rrcl}
\rho\colon &\A^1_\k& \hookrightarrow & \A^2_\k\\
&u&\mapsto&  (u^{p^2},\frac{1}{b}(a u^{pq}+ u))
\end{array}\]
is a closed embedding, with image being the closed curve of 
$\A^2_\k=\Spec(\k[x,y])$ given by
\[
	x+ a^{p^2}x^{pq}-b^{p^2} y^{p^2}=0.
\]
\end{lemma}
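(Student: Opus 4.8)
The plan is to analyse the comorphism $\rho^\ast\colon \k[x,y]\to\k[u]$ given by $x\mapsto u^{p^2}$ and $y\mapsto \tfrac1b(au^{pq}+u)$, and to establish two facts: that $\rho^\ast$ is surjective (equivalently, that $\rho$ is a closed embedding), and that its kernel is exactly the principal ideal $(f)$, where $f=x+a^{p^2}x^{pq}-b^{p^2}y^{p^2}$ (equivalently, that the image is the curve $C=V(f)$).

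First I would check that $f$ lies in $\ker\rho^\ast$ by a Frobenius computation. Since $\car(\k)=p$, the map $z\mapsto z^{p^2}$ is additive, so from the relation $by=au^{pq}+u$ we get $b^{p^2}y^{p^2}=(au^{pq}+u)^{p^2}=a^{p^2}u^{p^2\cdot pq}+u^{p^2}=a^{p^2}(u^{p^2})^{pq}+u^{p^2}$. Substituting $x=u^{p^2}$ yields $b^{p^2}y^{p^2}=a^{p^2}x^{pq}+x$, i.e.\ $\rho^\ast(f)=0$, so the image is contained in $C$.

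Next I would prove surjectivity. Write $A=\rho^\ast(\k[x,y])=\k[u^{p^2},g]$ with $g=au^{pq}+u$ (a nonzero scalar multiple of $\rho^\ast(y)$, so $g\in A$). If $q=0$ then $g=a+u$, whence $u\in A$ directly. If $q\ge1$, I apply Frobenius once to $g$: $g^{p}=a^{p}u^{p^2q}+u^{p}=a^{p}(u^{p^2})^{q}+u^{p}\in A$, and since $u^{p^2}\in A$ this forces $u^{p}\in A$; then $u^{pq}=(u^{p})^{q}\in A$, so $u=g-au^{pq}\in A$. In all cases $A=\k[u]$, so $\rho^\ast$ is surjective and $\rho$ is a closed embedding.

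Finally I would identify the image with all of $C$. Because $b\neq0$, we have $(f)=\bigl(y^{p^2}-h(x)\bigr)$ with $h(x)=b^{-p^2}(x+a^{p^2}x^{pq})$, so $\k[x,y]/(f)$ is a free module over $R:=\k[x]$ of rank $p^2$ with basis $1,y,\dots,y^{p^2-1}$; likewise $\k[u]$ is free over $R=\k[u^{p^2}]$ of rank $p^2$ with basis $1,u,\dots,u^{p^2-1}$. The surjection $\rho^\ast$ descends to a surjective $R$-linear map between free $R$-modules of the same finite rank $p^2$; represented by a square matrix over the commutative ring $R$, surjectivity forces its determinant to be a unit, so it is an isomorphism. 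Hence $\ker\rho^\ast=(f)$ and the (reduced) image of $\rho$ equals $C=V(f)$. The main obstacle is the surjectivity step, i.e.\ the Frobenius trick recovering $u$, together with the passage from ``image $\subseteq C$'' to ``image $=C$''; the equal-rank free-module argument is the cleanest way to obtain the latter, and one should take care that $b\neq0$ is exactly what makes $(f)$ monic in $y^{p^2}$ so that the rank count is valid.
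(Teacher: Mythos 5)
Your proof is correct. The first two steps coincide with the paper's argument: the same Frobenius computation shows both that $f=x+a^{p^2}x^{pq}-b^{p^2}y^{p^2}$ vanishes on the image and that $u^p$ (hence $u$, via $u=b\rho^\ast(y)-a(u^p)^q$) lies in $\k[u^{p^2},\tfrac1b(au^{pq}+u)]$, giving the closed embedding. Where you diverge is the final step, identifying the image with all of $V(f)$. The paper argues by degree: since $\rho$ is a closed embedding, its image is an irreducible curve whose defining polynomial has degree $\deg(\rho)=\max(p^2,pq)$ (resp.\ $p^2$ if $a=0$), and $\deg(P)$ equals this, so $P$ must be that irreducible defining polynomial. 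You instead observe that $(f)$ is monic in $y$ of degree $p^2$ over $\k[x]$ (this is where $b\neq 0$ enters), so that $\k[x,y]/(f)$ and $\k[u]$ are both free $\k[x]$-modules of rank $p^2$, and a surjective map between free modules of equal finite rank over a commutative ring is an isomorphism (its matrix has a right inverse, hence unit determinant); this pins down $\ker\rho^\ast=(f)$ exactly. Your route is slightly more robust, in that it does not invoke the relation between the degree of a parametrization and the degree of the image curve, and it yields irreducibility of $f$ as a byproduct ($\k[x,y]/(f)\cong\k[u]$ is a domain) rather than as an input; the paper's degree count is shorter but leans on that classical fact. Both are complete proofs.
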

\begin{proof}We first compute the equality
\[
	b^{p}
	\left(\frac{1}{b}(a u^{pq}+ u)\right)^p-a^p(u^{p^2})^q=
	(a^pu^{p^2q}+u^p)-a^p u^{p^2q}=u^p,
\]  
which shows that $\rho(\A^1_\k)$ is contained in the closed curve $\Gamma\subset \A^2_\k$ given by $P=0$, where 
$P= x - (b^{p}y^{p}-a^px^{q})^p = x-b^{p^2}y^{p^2}+a^{p^2}x^{qp}\in \k[x,y]$. The equality also yields $u^p\in \k[u^{p^2}, \frac{1}{b}(a u^{pq}+u)]$ 
and thus yields 
$\k[u^{p^2}, \frac{1}{b}(a u^{pq}+u)]=\k[u]$, which implies that $\rho$ is a closed embedding. 
It remains to see that the degree of $\rho$ (maximum of the degree of both components) is equal to the degree of $P$, to obtain that $P$ is irreducible and that it defines the irreducible curve $\rho(\A^1_\k)$. For $a = 0$, this follows, since $\deg(\rho)=p^2=\deg(P)$. For $a\not=0$, we have $\deg(\rho)=\max(p^2,pq)=\deg (P)$.
\end{proof}

To show that the above embeddings are not equivalent to the standard one, when $q\ge 2$ is not a multiple of $p$ and $a,b \not=0$, 
one could make argument on the degree of the components 
(no one divides the other) or can use the characterisation of 
variables given in Lemma~\ref{Lem:CharacVariable}
to show that 
$P=x+a^{p^2}x^{pq}-b^{p^2}y^{p^2} \in \k[x,y]$ is not a variable, by proving that $\k[x,y,t]/(P-t)$ is not a polynomial ring in one variable over $\k[t]$, as we do in 
Lemma~\ref{Lemm:FormofA1} below. 
The second way has the advantage to give examples in any dimension (see Proposition~\ref{Prop:EmbeddingsCharP}).
This is related to the forms of the affine line over non-perfect fields (for more details on this subject, see \cite{Russell1970}). 

\begin{lemma}\label{Lemm:FormofA1}
For each field $\k$ of characteristic $p>0$, each $b \in \k^\ast$ and 
each integer $q\ge 2$, not a multiple of $p$, the curve 
\[
	\Gamma=\Spec \left(\k(t)[x,y]/(x+a^{p^2} x^{pq}- b^{p^2} y^{p^2}-t)\right)
\]
is not isomorphic to $\A^1_{\k(t)}$, but after extension of scalars to $\k(t^{1/p})$ we have an isomorphism \[\Gamma_{\k(t^{1/p})}\iso \A^1_{\k(t^{1/p})}.\]
\end{lemma}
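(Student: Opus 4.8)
The plan is to treat the two assertions separately, reducing everything to the characteristic‑$p$ identity
\[
	x+a^{p^2}x^{pq}-b^{p^2}y^{p^2}=x+(a^px^q-b^py^p)^p .
\]
(Note that the statement tacitly requires $a\in\k^\ast$: for $a=0$ one has $\k(t)[x,y]/(x-b^{p^2}y^{p^2}-t)=\k(t)[y]$, so $\Gamma\cong\A^1_{\k(t)}$ and the first claim fails; I carry $a\in\k^\ast$ from the preceding results.)

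For the isomorphism over $K:=\k(t^{1/p})$, write $s=t^{1/p}$, so $t=s^p$ and the relation defining $\Gamma_K$ becomes $x+(a^px^q-b^py^p-s)^p=0$. I would introduce $v:=s+b^py^p-a^px^q$, verify $v^p=x$ on $\Gamma_K$ (since $v^p=t+b^{p^2}y^{p^2}-a^{p^2}x^{pq}=x$), and then set $w:=by-av^q$. A direct computation gives $w^p=b^py^p-a^px^q=v-s$, hence $v=w^p+s$ and therefore $x=v^p=w^{p^2}+t$ and $y=\tfrac1b\bigl(w+a(w^p+s)^q\bigr)$. This exhibits mutually inverse $K$-algebra homomorphisms between $K[W]$ and the coordinate ring $A_K$ of $\Gamma_K$ (one sending $W\mapsto by-av^q$, the other $x\mapsto W^{p^2}+t$, $y\mapsto\tfrac1b(W+a(W^p+s)^q)$); checking that they respect the defining relation and compose to the identity is routine. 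As $w$ is a non-constant element of the one‑dimensional domain $A_K$, it is transcendental over $K$, so $A_K=K[w]$ and $\Gamma_{K}\iso\A^1_K$.

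For the non‑triviality over $\k(t)$, I would argue through the unique point at infinity. First, $\Gamma$ is smooth over $\k(t)$: for $g:=x+a^{p^2}x^{pq}-b^{p^2}y^{p^2}-t$ one has $\partial g/\partial x=1$ in characteristic $p$ (because $p\mid pq$), so the Jacobian criterion applies, and $\Gamma$ is geometrically integral by the previous paragraph. Hence $\Gamma$ possesses a unique place at infinity $v_\infty$ (geometrically there is one, since $\Gamma_{\overline{\k(t)}}\cong\A^1$), which I identify with the restriction to $F:=\k(t)(x,y)$ of the place $w=\infty$ of $F_K=K(w)$ (indeed $x$ has a pole there). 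The core computation is its residue field $L$. Using $N:=b^py^p-a^px^q=w^p\in F$ and $M:=by\,N^{-q}\in F$, the expansion at $w=\infty$ gives $M=a+aq\,s\,w^{-p}+(\text{higher order})$, where $q\ge2$ ensures that the term $w^{1-pq}$ coming from $by$ is of strictly higher order than $w^{-p}$, and $p\nmid q$ ensures $aq\in\k^\ast$. Consequently $(M-a)N\in F$ has valuation $0$ and residue $aq\,s$, so $s\in L$; since also $L\subseteq K$, this yields $L=K=\k(t^{1/p})$, a purely inseparable extension of $\k(t)$ of degree $p$.

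Finally I would conclude by an invariance argument: the set of places of $F$ not dominating the coordinate ring, together with their residue fields, is intrinsic to that ring, hence preserved by any isomorphism of coordinate rings. For $\A^1_{\k(t)}=\Spec\k(t)[W]$ the unique place at infinity is $\k(t)$-rational. An isomorphism $\Gamma\iso\A^1_{\k(t)}$ would match the boundary places and their residue fields, forcing $L=\k(t)$ and contradicting $[L:\k(t)]=p$. Thus $\Gamma\not\cong\A^1_{\k(t)}$. The main obstacle is the residue field computation at infinity: one must pin down $v_\infty$ through the $K$-parametrisation and extract precisely the coefficient $aq\,s$, which is exactly where the hypotheses $q\ge2$, $p\nmid q$ and $a\ne0$ all enter; the subsequent intrinsic‑boundary argument is then formal.
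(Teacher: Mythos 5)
Your proof is correct, and the two halves relate to the paper's proof differently. For the isomorphism over $\k(t^{1/p})$ you arrive, via the substitution $v=s+b^py^p-a^px^q$, $w=by-av^q$, at exactly the same explicit parametrisation $x=w^{p^2}+t$, $y=\tfrac1b\bigl(w+a(w^p+s)^q\bigr)$ that the paper obtains (the paper first extends to $\k(t^{1/p^2})$, invokes Lemma~\ref{Lemm:EmbCharP}, and then descends by the shift $u\mapsto u+t^{1/p^2}$); this part is essentially identical. For the non-triviality over $\k(t)$ your route is genuinely different: the paper composes a hypothetical isomorphism $\A^1_{\k(t)}\iso\Gamma_{\k(t)}$ with $\nu^{-1}$ to get an affine automorphism $u\mapsto\alpha u+\beta$ of $\A^1_{\k(t^{1/p})}$ and reads off a contradiction from the coefficient of $u^{p(q-1)}$, namely $\tfrac{a}{b}q\alpha^{p(q-1)}(\beta^p+t^{1/p})\notin\k(t)$, which is elementary and self-contained; you instead compute that the unique place at infinity of $\Gamma$ has residue field $\k(t^{1/p})$ (via the residue $aq\,t^{1/p}$ of $(M-a)N$) and invoke the intrinsic nature of the boundary places, which is the classical invariant for non-trivial forms of $\A^1$ in the sense of Russell and is more conceptual, at the cost of importing some valuation-theoretic background. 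Both arguments use the hypotheses $q\ge 2$, $p\nmid q$ and $a\neq 0$ at the same spot (the coefficient $aq$), and you are right that the lemma as stated tacitly assumes $a\in\k^\ast$ (for $a=0$ the curve is $\Spec\k(t)[y]$ and the first assertion fails); the surrounding text of the paper confirms this intended hypothesis.
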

\begin{proof}
After extending the scalars to $\k(t^{1/p^2})$, the curve $\Gamma$ becomes 
\[
	\begin{array}{rcl}
		\Gamma_{\k(t^{1/p^2})}&=&\Spec \left(
			\k(t^{1/p^2})[x,y]/(x+a^{p^2}x^{pq} - b^{p^2}y^{p^2}-t) \right) 
			\vspace{0.1cm}\\

		&=&\Spec 
		\left(\k(t^{1/p^2})[x,y]/
		\left(x+ a^{p^2}x^{pq}-b^{p^2} \left(y+\frac{t^{1/p^2}}{b} \right)^{p^2} 
		\right) \right) \, . 	\end{array}
\]
Replacing $y$ with $y+\frac{t^{1/p^2}}{b}$ and applying 
Lemma~\ref{Lemm:EmbCharP} we obtain an isomorphism
\[
	\begin{array}{rcl}
		\A^1_{\k(t^{1/p^2})}& \iso & \Gamma_{\k(t^{1/p^2})}\\
		u & \mapsto & \left(u^{p^2}, \frac{1}{b}
		\left(au^{pq}+u-t^{1/p^2}\right)\right) \, . %\\
		%y+t^{1/p^2}-(y^p+t^{1/p}-x^q)^q&\mapsfrom & (x,y).
	\end{array}
\]
Replacing then $u$ with $u+t^{1/p^2}$ we get an isomorphism defined over 
$\k(t^{1/p})$:
\[
	\begin{array}{rrcl}
		\nu\colon& \A^1_{\k(t^{1/p})}& \iso & \Gamma_{\k(t^{1/p})}\\
		& u & \mapsto & \left(u^{p^2}+t,
		\frac{1}{b} \left(a \left(u^p+t^{1/p}\right)^q+u\right)\right) \, .%\\
		%y-(y^p+t^{1/p}-x^q)^q&\mapsfrom & (x,y)
	\end{array}
\]
It remains that no isomorphism $\hat\nu\colon \A^1_{\k(t)}\iso \Gamma_{\k(t)}$ exists. If $\hat\nu$ exists, then $\nu^{-1}\hat\nu\in \Aut(\A^1_{\k(t^{1/p})})$ would be given by $u\mapsto \alpha u+\beta$, with $\alpha,\beta\in \k(t^{1/p})$, $\alpha\not=0$.  The second coordinate of $\hat\nu(u)$ would then be
\[
	\frac{1}{b}
	\left(a\left((\alpha u+\beta)^p+t^{1/p}\right)^{q}+(\alpha u+\beta) \right) 
	\in \k(t)[u] \, .
\]
The coefficient of $u$ being $\frac{\alpha}{b}$, we get $\alpha\in \k(t)$. Remembering that $q\ge 2$, the coefficient of $u^{p(q-1)}$ is equal to 
$\frac{a}{b} q \alpha^{p(q-1)}(\beta^p+t^{1/p})$. As $\beta^p\in \k(t)$ we have $\beta^p+t^{1/p}\notin \k(t)$. Impossible, since $q$ is not a multiple of $p$ and $\alpha\not=0$.  
\end{proof}

\begin{corollary}\label{Cor:NotVariablesCharP}
For each field $\k$ of characteristic $p>0$, each integer $q\ge 2$ which is not a multiple of $p$, each $\lambda,\mu\in \k^*$ and each integer $n\ge 2$, the polynomial 
\[f=x_1+\lambda x_1^{pq}+\mu x_2^{p^2}\in \k[x_1,x_2]\subset\k[x_1,\dots,x_n]\] is not a variable of the $\k$-algebra $\k[x_1,\dots,x_n]$.
\end{corollary}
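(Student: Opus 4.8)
The plan is to reduce, via Lemma~\ref{Lem:CharacVariable} and a cancellation argument, to the fact proved in Lemma~\ref{Lemm:FormofA1} that the relevant curve is a nontrivial form of the affine line. First I would suppose for contradiction that $f$ is a variable of $S=\k[x_1,\dots,x_n]$. By Lemma~\ref{Lem:CharacVariable} the $\k[t]$-algebra $S[t]/(f-t)$ is then a polynomial ring in $n-1$ variables over $\k[t]$, and tensoring with $\k(t)$ over $\k[t]$ shows that $\k(t)[x_1,\dots,x_n]/(f-t)$ is a polynomial ring in $n-1$ variables over $K:=\k(t)$. Since $f$ involves only $x_1,x_2$, this algebra equals $A[x_3,\dots,x_n]$ with $A:=K[x_1,x_2]/(f-t)$, whence
\[
	A[x_3,\dots,x_n]\;\cong\;K[y_1,\dots,y_{n-1}].
\]

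Next I would put $f-t$ into the shape of Lemma~\ref{Lemm:FormofA1} after a purely inseparable extension of the base field. Let $\k'=\k(\lambda^{1/p^2},(-\mu)^{1/p^2})$, a finite purely inseparable extension of $\k$ of characteristic $p$. Over $\k'$ we may write
\[
	f-t = x_1 + a^{p^2}x_1^{pq} - b^{p^2}x_2^{p^2} - t,\qquad a=\lambda^{1/p^2},\ b=(-\mu)^{1/p^2}\in(\k')^\ast,
\]
so that $a^{p^2}=\lambda$ and $-b^{p^2}=\mu$, with $a,b\neq 0$ as $\lambda,\mu\neq 0$. Applying Lemma~\ref{Lemm:FormofA1} over the field $\k'$ (with nonzero $a$), the curve $\Gamma'_{\k'(t)}=\Spec(K'[x_1,x_2]/(f-t))$, where $K'=\k'(t)$, is \emph{not} isomorphic to $\A^1_{K'}$, whereas over $L:=\k'(t^{1/p})$ the same lemma gives $A\otimes_K L\cong L[u]$. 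In particular $A\otimes_K L$ is a domain; since the field extension $K\subset L$ is faithfully flat, $A$ embeds into it and is therefore a one-dimensional affine $K$-domain.

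Now I would invoke the cancellation theorem for affine curves over a field (valid in all characteristics): from $A[x_3,\dots,x_n]\cong K[y_1,\dots,y_{n-1}]$ with $A$ a one-dimensional affine $K$-domain, one concludes $A\cong K[y]$, that is $\Gamma':=\Spec(A)\cong\A^1_K$. For $n=2$ this step is vacuous, the displayed isomorphism already reading $A\cong K[y_1]$. But $\Gamma'\cong\A^1_K$ forces, after base change along $K\subset K'=\k'(t)$, that $\Gamma'_{K'}\cong\A^1_{K'}$, contradicting the previous paragraph. Hence $f$ is not a variable of $\k[x_1,\dots,x_n]$.

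The main obstacle is precisely the cancellation step for $n>2$: passing from $A[x_3,\dots,x_n]\cong K[y_1,\dots,y_{n-1}]$ back to $A\cong K[y]$. In the case $n=2$ one could instead argue through the degrees of the two components of a tame automorphism (no one divides the other), but this is unavailable for $n>2$, where automorphisms of $K[y_1,\dots,y_{n-1}]$ need not be tame; this is exactly why one routes the argument through the generic fibre and cancels the affine line. The remaining care is to verify that the purely inseparable extension $\k'$ puts $f-t$ into the exact form required by Lemma~\ref{Lemm:FormofA1} and that the property of being isomorphic to $\A^1$ is preserved under the base-field extension $K\subset K'$.
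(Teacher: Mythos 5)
Your proposal is correct and follows essentially the same route as the paper: reduce via Lemma~\ref{Lem:CharacVariable} to the generic fibre over $\k(t)$, observe that the algebra splits off the variables $x_3,\dots,x_n$, pass to a purely inseparable extension to put $f-t$ in the shape of Lemma~\ref{Lemm:FormofA1}, and invoke the Abhyankar--Heinzer--Eakin cancellation theorem for curves to handle $n>2$. The only (immaterial) difference is the order of the last two steps — the paper settles $n=2$ first and then cancels, while you cancel first and then derive the contradiction — and your explicit attention to the scalar extension and to $a\neq 0$ is a welcome bit of extra care.
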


\begin{proof}
Showing that $f$ is not a variable of $\k[x_1,\dots,x_n]$ is equivalent to ask that the $\k[t]$-algebra $\k[x_1,x_2,\dots,x_n,t]/(f-t)$ is not a polynomial ring in $n-1$ variables over $\k[t]$ (Lemma~\ref{Lem:CharacVariable}). It suffices then to show that $A_n=\k(t)[x_1,\dots,x_n]/(f-t)$ is not a polynomial ring in $n-1$ variables over $\k(t)$.

We first prove the result for $n=2$. By extending the scalars, we can assume that $\lambda=a^{p^2}$ and $\mu=-b^{p^2}$ for some $a,b \in \k^*$.  Lemma~\ref{Lemm:FormofA1} then shows that $A_2=\k(t)[x_1,x_2]/(f-t)$ is not a polynomial ring in one variable.

As $A_n=A_2[x_3,\dots,x_n]$, the positive answer to the cancellation problem for curves \cite{AHE} implies that $A_n$ is not a polynomial ring in $n-1$ variables over $\k(t)$ for each $n\ge 2$.
\end{proof}

\begin{proposition}\label{Prop:EmbeddingsCharP}
For each field $\k$ of characteristic $p>0$, each integer $q\ge 2$ which is not a multiple of $p$, each $a \in \k^*$ and each $n\ge 1$, the morphism
\[\begin{array}{rrcl}
\rho\colon &\A^n& \hookrightarrow & \A^{n+1}\\
&(x_1,\dots,x_n)&\mapsto&  (x_1^{p^2}, a x_1^{pq} + x_1,x_2,\dots,x_n)
\end{array}\]
is a closed embedding, which is not equivalent to the standard one.
\end{proposition}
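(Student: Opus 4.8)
The plan is to reduce the claim to the two results already established: Lemma~\ref{Lemm:EmbCharP}, which shows that the given map is a closed embedding with an explicit image equation, and Corollary~\ref{Cor:NotVariablesCharP}, which shows that a polynomial of precisely this shape fails to be a variable. The statement of Proposition~\ref{Prop:EmbeddingsCharP} packages the case $m=n-1$ of the embedding problem in positive characteristic: the morphism is a hypersurface embedding whose image is the zero locus of a non-variable, and a hypersurface embedding is equivalent to the standard one exactly when its defining polynomial is a variable.

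First I would handle the case $n=1$ directly, since there $\rho$ is the map $u\mapsto(u^{p^2},au^{pq}+u)$ (taking $b=1$ in Lemma~\ref{Lemm:EmbCharP}), and Lemma~\ref{Lemm:EmbCharP} already identifies it as a closed embedding whose image is the curve $\Gamma\subset\A^2_\k$ given by $P=x+a^{p^2}x^{pq}-y^{p^2}=0$. To conclude non-equivalence it suffices to observe that $\rho$ is equivalent to the standard embedding $\A^1_\k\hookrightarrow\A^2_\k$, $x_1\mapsto(x_1,0)$, if and only if there is $\varphi\in\Aut(\A^2_\k)$ carrying $\Gamma$ onto the coordinate line $\{y=0\}$, which by Lemma~\ref{Lem:CharacVariable} (the equivalence of being a variable with $\k[x,y,t]/(P-t)$ being a polynomial ring over $\k[t]$) amounts to $P$ being a variable of the $\k$-algebra $\k[x,y]$. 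But Corollary~\ref{Cor:NotVariablesCharP} (applied with $\lambda=a^{p^2}$, $\mu=-1$, $n=2$) says exactly that $P$ is \emph{not} a variable, so $\rho$ is not equivalent to the standard embedding.

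For general $n\ge 1$ the same argument goes through essentially verbatim, the only new point being that $\rho$ factors as the $n=1$ embedding in the first two coordinates together with the identity in the remaining coordinates $x_2,\dots,x_n$. Thus its image is the hypersurface in $\A^{n+1}_\k=\Spec(\k[x,y,x_2,\dots,x_n])$ given by the same polynomial $P=x+a^{p^2}x^{pq}-y^{p^2}$, now viewed as an element of $\k[x,y,x_2,\dots,x_n]$. Equivalence to the standard embedding $(x_1,\dots,x_n)\mapsto(x_1,\dots,x_n,0)$ is again equivalent (via Lemma~\ref{Lem:CharacVariable}) to $P$ being a variable of this larger polynomial algebra, and Corollary~\ref{Cor:NotVariablesCharP} with the appropriate value of $n$ rules this out.

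The one step requiring genuine care — and what I expect to be the main obstacle — is the passage from ``the image is cut out by a non-variable'' to ``the embedding is not equivalent to the standard one.'' This is the precise place where the distinction between equivalence of embeddings and equivalence of images (raised in the introduction) could cause trouble. Here, however, it is harmless: since both $\rho$ and the standard embedding are \emph{injective} with isomorphic smooth curve images, an automorphism of $\A^{n+1}_\k$ sending one image to the other would exhibit $P$ (up to a translation $P+\lambda$, which does not affect the variable property) as a variable, contradicting Corollary~\ref{Cor:NotVariablesCharP}. One should state this carefully, noting that equivalence of the embeddings in particular forces equivalence of their images, so the weaker (image-level) obstruction already suffices to conclude.
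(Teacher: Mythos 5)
Your proposal is correct and takes essentially the same route as the paper: Lemma~\ref{Lemm:EmbCharP} gives the closed embedding together with the image equation $x_1+a^{p^2}x_1^{pq}-x_2^{p^2}=0$, and Corollary~\ref{Cor:NotVariablesCharP} shows this polynomial is not a variable of $\k[x_1,\dots,x_{n+1}]$, which rules out equivalence to the standard embedding. The only difference is that the paper leaves implicit the passage from ``the defining polynomial is not a variable'' to ``the embedding is not equivalent to the standard one,'' whereas you spell it out (correctly) via equivalence of images.
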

\begin{proof}
It follows from Lemma~\ref{Lemm:EmbCharP} that $\rho$ is a closed embedding and that its image is given by the hypersurface with equation 
$f=0$, where \[f=x_1+ a^{p^2}x_1^{pq}-x_2^{p^2}\in \k[x_1,x_2]\subset
\k[x_1,\dots,x_n].\]

It remains to show that $f$ is not a variable of $\k[x_1,\dots,x_n]$, 
which follows from Corollary~\ref{Cor:NotVariablesCharP}.
\end{proof}
\section{Liftings of automorphisms and the proof of Theorem~\ref{Thm:StdEmbSL2}}
\subsection{Lifting of automorphisms of $\A^3_\k$ to affine modifications}
\begin{lemma}
\label{lem.Key}
Let $\k$ be a field and let
\[R = \k[t,u,x,y] / (t^nu-h(t,x,y))\]
where $n\ge 1$ and $h \in \k[t,x, y]$ is a polynomial  such that 
$h_0=h(0,x,y)\in \k[x,y]$ does not belong to $\k[w]$ for each variable $w\in \k[x,y].$

\begin{enumerate}[label=$(\arabic*)$, ref=\arabic*]
\item\label{DescriptionR}
Every element of $R\setminus \k[t,x,y]$ can be written as
\[s+\sum_{i=1}^{m} f_i u^i\] 
where $s\in \k[t,x,y]$, $m\ge 1$, $f_1,\dots,f_m\in \k[t,x,y]$ are polynomials of degree $<n$ in $t$, and $f_m\not=0$.
\item\label{LimitAfterMult}
If $f\in R\setminus \k[t,x,y]$ is written as in $(\ref{DescriptionR})$ and $d=\nu(f_m)$ is the valuation of $f_m$ in $t$, then $0\le d<n$ and $t^{mn-d}f=g(t,x,y)\in \k[t,x,y]$ satisfies $g(0,x,y)\in h_0\cdot \k[x,y]\setminus \{0\}$.
\item\label{IdealRktxy}
Writing $I\subset \k[t,x,y]$ for the ideal $(t^n,h)$, we have $t^nR\cap \k[t,x,y]=I$.
\item
\label{Preservesktxy}
Every element of $\Aut_{\k[t]}(R)$ preserves the sets $\k[t,x,y]$ and $I$.
\end{enumerate}
\end{lemma}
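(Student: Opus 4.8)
The plan is to use that $h_0=h(0,x,y)\neq 0$ (every variable $w$ contains $0$, so the hypothesis forces $h_0\notin\k$, in particular $h_0\neq0$ and $\deg h_0\ge 1$); hence $t\nmid h$, the polynomial $t^nu-h$ is irreducible in $\k[t,x,y][u]$, so $R$ is a domain, and sending $u\mapsto h/t^n$ realises $R$ as the subring $\k[t,x,y][h/t^n]$ of $\k[t,t^{-1},x,y]$. I will work throughout with the $t$-adic valuation $\val_t$ on $\k[t,t^{-1},x,y]$, noting $\val_t(h)=0$. For $(\ref{DescriptionR})$ I would run a reduction: expand an element as a polynomial in $u$ over $\k[t,x,y]$ and repeatedly rewrite $t^nu^i=h\,u^{i-1}$ whenever a coefficient of $u^i$ with $i\ge1$ has $t$-degree $\ge n$; each step strictly lowers the $u$-degree of the term treated, so the procedure terminates in an expression $s+\sum_{i=1}^m f_iu^i$ with $\deg_t f_i<n$, and if the element lies outside $\k[t,x,y]$ then $m\ge1$, $f_m\neq0$.

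For $(\ref{LimitAfterMult})$, and simultaneously for uniqueness of the normal form, I substitute $u=h/t^n$, obtaining $t^{mn}f=s\,t^{mn}+\sum_{i=1}^m f_ih^it^{n(m-i)}\in\k[t,x,y]$. Since $\val_t(h)=0$, the summand $f_mh^m$ has $t$-valuation $d=\nu(f_m)\in\{0,\dots,n-1\}$, while all other summands have $t$-valuation $\ge n>d$; thus $\val_t(t^{mn}f)=d$, so $g:=t^{mn-d}f\in\k[t,x,y]$, and reducing modulo $t$ annihilates every summand except $\tilde f_m h^m$ (where $f_m=t^d\tilde f_m$, $t\nmid\tilde f_m$), giving $g(0,x,y)=\tilde f_m(0,x,y)\,h_0^{\,m}\in h_0\cdot\k[x,y]\setminus\{0\}$. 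In particular the leading coefficient $(t^{-\val_t(f)}f)|_{t=0}$ of any $f\in R$ with $\val_t(f)<0$ is divisible by $h_0$; this is the fact that will drive $(\ref{Preservesktxy})$. For $(\ref{IdealRktxy})$ I would bypass the normal form and compute the quotient directly: in $\k[t,x,y][u]$ one has $(t^nu-h,\,t^n)=(t^n,h)$ since $t^nu-h\equiv-h\pmod{t^n}$, whence $R/t^nR\cong(\k[t,x,y]/(t^n,h))[u]$. The composite $\k[t,x,y]\hookrightarrow R\twoheadrightarrow R/t^nR$ is the reduction modulo $I=(t^n,h)$ into the degree-zero part, and because $\k[t,x,y]\hookrightarrow R$ is injective its kernel is exactly $t^nR\cap\k[t,x,y]$, which therefore equals $I$.

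For $(\ref{Preservesktxy})$, let $A=\k[t,x,y]$ and $\phi\in\Aut_{\k[t]}(R)$. Once $\phi(A)=A$ is known, $\phi$ preserves $I=t^nR\cap A$ automatically, since $\phi$ fixes $t$ (so $\phi(t^nR)=t^nR$) and one applies $(\ref{IdealRktxy})$; hence it suffices to prove $\phi(A)\subseteq A$ and then apply this to $\phi^{-1}$. By $(\ref{DescriptionR})$–$(\ref{LimitAfterMult})$, $A=\{f\in R:\val_t(f)\ge0\}$, and polar elements have leading coefficient divisible by $h_0$. Suppose for contradiction that $X:=\phi(x)$ or $Y:=\phi(y)$ is polar. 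Extend $\phi$ to the $\k(t)$-automorphism $\Phi$ of $\k(t)[x,y]$ and write $x=F(X,Y)$ with $F\in\k(t)[S_1,S_2]$; passing to leading forms along $\val_t$, every monomial $X^iY^j$ with $i+j\ge1$ containing a polar factor contributes a leading coefficient in $(h_0)$. In the case where both $X,Y$ are polar this gives $x\in\k+(h_0)$, i.e. $h_0\mid(x-\lambda)$ for some $\lambda\in\k$; as $\deg h_0\ge1$ this forces $h_0=\mu(x-\lambda)$, so $h_0\in\k[x]$ with $x$ a variable, contradicting $h_0\notin\k[w]$.

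I expect $(\ref{Preservesktxy})$ to be the main obstacle, concentrated in the \emph{mixed} case where exactly one of $X,Y$ is polar: then the valuation-zero leading form of $F(X,Y)$ only lies in $\k[\ell]+(h_0)$, where $\ell$ is the leading coefficient of the non-polar generator, so one obtains $x,y\in\k[\ell]+(h_0)$ and hence that $\k[x,y]/(h_0)$ is generated by the single element $\bar\ell$. Converting this ``one-generated quotient'' back into membership $h_0\in\k[w]$ for a genuine variable $w$ is the delicate step, and it is exactly where the hypothesis $h_0\notin\k[w]$ must be fed through the characterisations of Lemmas~\ref{Lem:Dominant} and~\ref{Lem:TwoVariablesModulo}. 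The two further points requiring care are the control of cancellation among leading forms (so that $\val_t(F(X,Y))$ is read off correctly) and the positive-characteristic subtlety that a vanishing Jacobian need not signal algebraic dependence; both are handled by arguing directly with the ideal $(h_0)$ rather than with Jacobians.
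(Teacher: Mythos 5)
Parts (\ref{DescriptionR}) and (\ref{LimitAfterMult}) of your proposal are correct and essentially identical to the paper's argument (reduction of the $u$-expansion via $t^nu=h$, then clearing denominators and reading off the leading $t$-coefficient as $\tilde f_m(0,x,y)h_0^m$). Your proof of (\ref{IdealRktxy}) via $R/t^nR\cong(\k[t,x,y]/(t^n,h))[u]$ and injectivity of $\k[t,x,y]\hookrightarrow R$ is a genuinely different and cleaner route than the paper's (which instead deduces $m=1$ from the valuation estimate of part (\ref{LimitAfterMult})); it is correct.

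Part (\ref{Preservesktxy}), however, has a genuine gap, and it is the heart of the lemma. You explicitly leave open the ``mixed'' case where exactly one of $\phi(x),\phi(y)$ is polar, describing the passage from the one-generated quotient $\k[x,y]/(h_0)$ back to $h_0\in\k[w]$ as ``the delicate step'' without carrying it out; Lemmas~\ref{Lem:Dominant} and~\ref{Lem:TwoVariablesModulo} do not by themselves convert such a statement into the existence of a variable $w$ with $h_0\in\k[w]$. The missing idea is Lemma~\ref{Lemm:DegVariables}: if $\psi(x)\notin\k[t,x,y]$, then by part (\ref{LimitAfterMult}) the element $g=t^l\psi(x)\in\k[t,x,y]$ is a $\k(t)^*$-multiple of a variable of $\k(t)[x,y]$, hence itself a variable of $\k(t)[x,y]$, and Lemma~\ref{Lemm:DegVariables} says its reduction $g(0,x,y)$ lies in $\k_p[w]=\k[w]$ for some variable $w$ of $\k[x,y]$ (here $p=t$). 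Since $g(0,x,y)\in h_0\cdot\k[x,y]\setminus\{0\}$ and $\k[w]$ is factorially closed (Lemma~\ref{Lemm:ProdKxKxy}), this forces $h_0\in\k[w]$, the desired contradiction --- with no case distinction on which generators are polar. In addition, your sub-argument for the ``both polar'' case is not sound as stated: for a monomial $X^iY^j$ of negative $t$-valuation, the coefficient it contributes at valuation $0$ in the expansion of $F(X,Y)$ is not the product of the leading coefficients of $X$ and $Y$ but involves their lower-order $t$-coefficients, which need not lie in $(h_0)$ (already $u=h/t^n$ has second coefficient $h_1\notin(h_0)$ in general), so the conclusion $x\in\k+(h_0)$ does not follow from the cancellation of leading forms.
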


\begin{proof}
(\ref{DescriptionR}): We first prove that every element of $R\setminus \k[t,x,y]$ has the desired form. Every element of $R$ can be written as $\sum_{i=0}^m f_iu^i$ for some polynomials $f_i\in \k[t,x,y]$. We denote by $r$ the biggest integer such that $\deg_t(f_r)\ge n$. If $r=0$ or if no such integer exists, we are done. Otherwise, we write $f_r=t^nA+B$ for some $A,B\in \k[t,x,y]$ with $\deg_t(B)<n$. Then, replacing $f_{r-1}u^{r-1}+f_ru^r=f_{r-1}u^{r-1}+(t^nA+B)u^r$ in $\sum_{i=0}^m f_iu^i$ 
with $(f_{r-1}+h(t,x,y)A)u^{r-1}+Bu^r$ decreases the integer $r$. After finitely many such steps, we obtain the desired form.

(\ref{LimitAfterMult}): We write $f=s+\sum_{i=1}^{m} f_i u^i=s+\sum_{i=1}^{m} f_i \frac{h^i}{t^{ni}}$ as in (\ref{DescriptionR}), we write $d=\nu(f_m)$, which satisfies $0\le d<n$ (since $f_m\not=0$ and $\deg_t(f_m)<n$), and obtain $g=t^{mn-d}f=st^{mn-d}+\sum_{i=1}^{m} f_i h^it^{mn-d-ni}$. In particular, $g\in \k[t,x,y]$ and it
satisfies $g(0,x,y)=(h_0)^m\cdot r$, where $r\in \k[x,y]$ is obtained by replacing $t=0$ in $\frac{f_m}{t^d}\in \k[t,x,y]$. From $\{r,h_0\}\subset \k[x,y]\setminus \{0\}$, we deduce $g(0,x,y)\not=0$.

(\ref{IdealRktxy}): The inclusion $I\subset t^nR$ follows from $\{t^n,h\}=\{t^n\cdot 1,t^n\cdot u\}\subset t^n R$. To show that $t^nR \cap \k[t, x, y] \subset I$, we take $f\in R$ such that $t^nf\in \k[t,x,y]$ and show that $t^nf\in I$. If $f\in \k[t,x,y]$, then 
$t^nf\in t^n \k[t, x, y]\subset I$. Otherwise, we write 
$f=s+\sum_{i=1}^{m} f_i u^i$ as in (\ref{DescriptionR}) and use (\ref{LimitAfterMult}) to obtain that $g=t^{mn-d}f \in \k[t, x, y]$ with $0\le d=\nu(f_m)<n$, and we 
get $g(0,x,y) \neq 0$. The fact that $t^nf\in \k[t, x, y]$ implies then that $n > mn-d$, whence $n>d > (m-1)n$, so $m=1$. Hence 
$t^nf=t^n(s+f_1u)=t^ns + h f_1 \in I$.

 (\ref{Preservesktxy}): Using (\ref{IdealRktxy}) it suffices to show that every $\psi\in \Aut_{\k[t]}(R)$ preserves $\k[t,x,y]$. The algebra $R$ is canonically isomorphic to 
$\k[t,x,y][\frac{h}{t^n}] \subset  \k(t)[x,y]$.
 Since $\k(t)[x,y]$ is the localisation of $\k[t,x,y][\frac{h}{t^n}]$ in the multiplicative
 system $\k[t] \setminus \{ 0 \}$, we get a natural inclusion
 $\Aut_{\k[t]}(R)\subset\Aut_{K}K[x,y]$, with $K=\k(t)$. 

Suppose for contradiction that some $\psi\in \Aut_{\k[t]}(R)$ satisfies $\psi(\k[t,x,y])\not\subset \k[t,x,y]$. This implies that $\psi(x)\notin \k[t,x,y]$ or $\psi(y)\notin \k[t,x,y]$. 
  We assume that $\psi(x)\notin \k[t,x,y]$ (the case $\psi(y)\notin \k[t,x,y]$ being similar) and use (\ref{LimitAfterMult}) to obtain an integer $l>0$ such that $g=t^{l}\psi(x)\in \k[t,x,y]$ satisfies $g(0,x,y)\in h_0\cdot \k[x,y]\setminus \{0\}$. Since $\psi\in\Aut_{\k[t]}(R)\subset\Aut_{K}K[x,y]$, the element
 $\psi(x)$ is a variable of $K[x,y]$ and the same holds for $g(t,x,y)=t^{l}\psi(x)$. By Lemma~\ref{Lemm:DegVariables}, $g(0,x,y)$ belongs to $\k[w]$ for some variable $w\in \k[x,y]$. The fact that $g(0,x,y)\in h_0\cdot \k[x,y]\setminus \{0\}$ implies then that $h_0\in \k[w]$ (Lemma~\ref{Lemm:ProdKxKxy}), contradicting the hypothesis.
\end{proof}
\begin{corollary}\label{Cor:ExtensionAutomorphisms}
Let $\k$ be a field and
\[R = \k[t,u,x,y] / (t^nu-h(t,x,y))\]
where $n\ge 1$ and $h \in \k[t,x, y]$ is a polynomial such that 
$h_0=h(0,x,y)\in \k[x,y]$ does not belong to $\k[w]$ for each variable $w\in \k[x,y].$
Writing $I$ the ideal $(t^n,h)\subset \k[t,x,y]$, we obtain a group isomorphism
 \[\begin{array}{rcl}
 \Aut_{\k[t]}(R)&\iso & \{\psi\in \Aut_{\k[t]}(\k[t,x,y])\mid \psi(I)=I\}\\
 \varphi & \mapsto & \varphi|_{\k[t,x,y]}\end{array}\]
\end{corollary}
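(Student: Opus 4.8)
The plan is to construct the claimed isomorphism by using the structural results of Lemma~\ref{lem.Key} and showing that the restriction map $\varphi \mapsto \varphi|_{\k[t,x,y]}$ is a well-defined group homomorphism which is both injective and surjective. First I would verify that the map is well-defined: by Lemma~\ref{lem.Key}\eqref{Preservesktxy}, any $\varphi \in \Aut_{\k[t]}(R)$ preserves both $\k[t,x,y]$ and the ideal $I$, so $\varphi|_{\k[t,x,y]}$ is an endomorphism of $\k[t,x,y]$ fixing $\k[t]$ and preserving $I$. Applying the same reasoning to $\varphi^{-1}$ shows $\varphi|_{\k[t,x,y]}$ is in fact an automorphism with $(\varphi|_{\k[t,x,y]})^{-1} = \varphi^{-1}|_{\k[t,x,y]}$, and that $\varphi(I) = I$ (since $\varphi$ preserves $I$ and is invertible). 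Thus the map lands in the asserted target and is clearly a group homomorphism.

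For injectivity, I would argue that $R$ is generated over $\k[t,x,y]$ by the single element $u = h/t^n$ inside $\k(t)[x,y]$, as noted in the proof of Lemma~\ref{lem.Key}\eqref{Preservesktxy}. Since $\Aut_{\k[t]}(R) \subset \Aut_K(K[x,y])$ with $K = \k(t)$, an automorphism $\varphi$ is entirely determined by its values $\varphi(x), \varphi(y) \in K[x,y]$; hence if $\varphi|_{\k[t,x,y]} = \id$, then $\varphi$ fixes $x$ and $y$, and therefore fixes all of $K[x,y] \supset R$, giving $\varphi = \id$. This shows the restriction map is injective.

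The surjectivity is the main point and the only nontrivial step. Given $\psi \in \Aut_{\k[t]}(\k[t,x,y])$ with $\psi(I) = I$, I must produce an extension $\varphi \in \Aut_{\k[t]}(R)$ restricting to $\psi$. The natural candidate is to extend $\psi$ to $\k(t)[x,y]$ by the same formulas (possible since $\psi$ is $\k[t]$-linear and $x,y$ generate $\k(t)[x,y]$ over $K$), obtaining $\tilde\psi \in \Aut_K(K[x,y])$, and then to check that $\tilde\psi$ maps the subring $R = \k[t,x,y][h/t^n]$ isomorphically onto itself. The key computation is that $\tilde\psi(u) = \tilde\psi(h)/t^n$ lies in $R$: since $\psi(I) = I$ and $h \in I$, we have $\psi(h) \in I = (t^n, h)$, so $\psi(h) = a t^n + b h$ for some $a, b \in \k[t,x,y]$, whence $\tilde\psi(u) = \psi(h)/t^n = a + b u \in R$. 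This shows $\tilde\psi(R) \subset R$. Applying the same argument to $\psi^{-1}$ (which also preserves $I$) gives $\tilde\psi^{-1}(R) \subset R$, so $\tilde\psi$ restricts to an automorphism $\varphi \in \Aut_{\k[t]}(R)$ with $\varphi|_{\k[t,x,y]} = \psi$, completing the surjectivity and hence the proof.

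The hard part is the surjectivity argument, and specifically extracting from $\psi(I) = I$ the precise fact that $\psi(h) \in I$ so that $\tilde\psi(u) \in R$; the identity $u = h/t^n$ and the description $R = \k[t,x,y][h/t^n] \subset \k(t)[x,y]$ from Lemma~\ref{lem.Key}\eqref{Preservesktxy} are exactly what make this work, and one must be careful that $\tilde\psi$ and its inverse both preserve $R$ rather than just mapping it into a larger ring.
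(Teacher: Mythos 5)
Your proposal is correct and follows essentially the same route as the paper: well-definedness via Lemma~\ref{lem.Key}\eqref{Preservesktxy}, injectivity from the fact that $R$ embeds in $\k(t)[x,y]$ (the paper phrases this as birationality of $\Spec(R)\to\Spec(\k[t,x,y])$), and surjectivity by observing that $\psi(I)=I$ lets one rewrite $\psi(h)/t^n=a+bu\in R$, which is exactly the paper's terse statement that $\psi$ induces an automorphism of $R=\k[t,x,y][\frac{I}{t^n}]$. Your write-up simply makes the surjectivity computation explicit where the paper leaves it implicit.
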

\begin{proof}
According to Lemma~\ref{lem.Key}(\ref{Preservesktxy}), every element  $\varphi\in\Aut_{\k[t]}(R)$ preserves $\k[t,x,y]$ and~$I$, and thus restricts to an element $\psi\in \Aut_{\k[t]}(\k[t,x,y])$ that preserves $I$.

Conversely, each automorphism $\psi\in \Aut_{\k[t]}(\k[t,x,y])$ that preserves $I$ induces an automorphism of $R=\k[t,x,y][\frac{I}{t^n}]=\k[t,x,y][\frac{h}{t^n}]$. This latter is uniquely determined by $\psi$, since the morphism $\Spec(R)\to \Spec(\k[t,x,y])$ given by the inclusion $\k[t,x,y]\hookrightarrow R$ is birational.
\end{proof}
\begin{remark}
According to \cite[Definition~1.1 and Proposition~1.1]{KalimanZaidenberg99}, $\Spec(R)$ is the affine modification of $\A^3_\k=\Spec(\k[t,x,y])$ with locus $(I, t^n)$. It is thus natural that every automorphism of $\A^3_\k$ fixing the ideal lift to an automorphism of $\Spec(R)$. The interesting part of Corollary~\ref{Cor:ExtensionAutomorphisms} consists then in saying that all automorphisms
of the $\k[t]$-algebra $R$ are of this form.
\end{remark}
\subsection{Application of liftings to the case of $\SL_2$}
We will apply Corollary~\ref{Cor:ExtensionAutomorphisms} to the variety $\SL_2\subset \A^4$ given by 
\[\SL_2 = \left\{\left. \begin{pmatrix}
				  x & t \\
				  u & y
			     \end{pmatrix}\in \A^4 \right| xy - tu = 1 \right\},\]
			     and obtain Proposition~\ref{AutoSL2fixingX} below.
Before we give a proof, let us recall the following basic facts on
the coordinate ring of the variety $\SL_2$.
\begin{lemma}
	\label{lem:R_is_UFD}
	Let $R$ be the coordinate ring of $\SL_2$, i.e.~$R = \k[t,u,x,y] / (xy-tu-1)$.
	Then $R$ is a unique factorisation domain and the units of $R$
	satisfy $R^\ast = \k^\ast$.
\end{lemma}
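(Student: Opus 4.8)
The plan is to prove both assertions at once by localising at the prime element $x$ and invoking Nagata's criterion for factoriality. First I would record that $R$ is a Noetherian domain: the variety is cut out by $f = xy - tu - 1$, which, read as a polynomial of degree one in $x$ over $\k[y,t,u]$, equals $y\cdot x - (tu+1)$ and is primitive (its content $\gcd(y, tu+1)$ is a unit, since the prime $y$ does not divide $tu+1$). Hence $f$ is irreducible, $(f)$ is prime, and $R = \k[t,u,x,y]/(f)$ is a finitely generated $\k$-domain, so in particular Noetherian.

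Next I would check that $x$ is a prime element of $R$ and that inverting it produces a unique factorisation domain. Setting $x=0$ gives
\[
	R/(x) \cong \k[y,t,u]/(tu+1) \cong \k[y][t,t^{-1}],
\]
a localisation of a polynomial ring and thus a domain, so $(x)$ is prime in $R$. On the other hand, in $R[1/x]$ the relation $xy = tu+1$ lets one eliminate $y = x^{-1}(tu+1)$, whence
\[
	R[1/x] \cong \k[x,x^{-1},t,u],
\]
which is a localisation of the polynomial ring $\k[x,t,u]$ and therefore a UFD. By Nagata's criterion (a Noetherian domain in which inverting a single prime element yields a UFD is itself a UFD), $R$ is a unique factorisation domain. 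This establishes the first claim.

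For the statement on units, I would exploit the same localisation. The units of the Laurent ring $\k[x,x^{-1},t,u] = \k[t,u][x,x^{-1}]$ are exactly the elements $c\,x^n$ with $c \in \k^\ast$ and $n \in \Z$, since $\k[t,u]^\ast = \k^\ast$. Given $f \in R^\ast \subseteq R[1/x]^\ast$, write $f = c\,x^n$. If $n \neq 0$ one derives, from $f$ and $f^{-1}$ both lying in $R$, that a negative power of $x$ lies in $R$, forcing $x^{-1} \in R$ and contradicting the fact that the prime $x$ is a nonunit; hence $n = 0$ and $R^\ast = \k^\ast$. I do not expect a genuine obstacle here: the argument is entirely routine, the only points needing care being the verification that $R/(x)$ is a domain (so that $x$ is prime) and the correct bookkeeping in the elimination $R[1/x] \cong \k[x^{\pm 1},t,u]$, after which Nagata's criterion and the description of Laurent units finish both parts.
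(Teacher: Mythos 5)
Your proof is correct and follows essentially the same route as the paper: the paper localises at $t$ (giving $R_t=\k[t,\tfrac1t][x,y]$ and checking $R/tR\simeq\k[u,x,y]/(xy-1)$ is a domain) where you localise at $x$, but both arguments rest on Nagata's criterion (Matsumura, Theorem~20.2) plus the observation that $R^\ast$ injects into the units of the localisation, of which only the constants lie in $R^\ast$. The choice of $x$ versus $t$ is immaterial by the symmetry of the equation $xy-tu=1$.
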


\begin{proof}
	Since the localisation $R_t = \k[t, \frac{1}{t}] [x, y]$ is a unique 
	factorisation domain, we only have to see that 
	$tR$ is a prime ideal of $R$, by~\cite[Theorem~20.2]{Matsumura89}.  
	This is the case, since $R/tR \simeq \k[u, x,y]/(xy-1)$
	is an integral domain. Moreover, we have 
	$R^\ast \subseteq (R_t)^\ast =\{\mu t^n\mid \mu\in\k^*,n\in\mathbb{Z}\}$.
	Since $t^n$ is invertible in $R$ if and only if $n = 0$, it follows 
	that $R^\ast = \k^\ast$.
\end{proof}

\begin{proposition}
\label{AutoSL2fixingX}
We consider the morphisms
\[\begin{array}{ccccc}
\SL_2=\Spec(\k[t,u,x,y] / (xy-tu-1)) & \stackrel{\eta}{\to} & \A^3_\k & \stackrel{\pi }{\to}& \A^1_\k\\
(t,u,x,y)& \mapsto & (t,x,y) & \mapsto & t\end{array}\]
and denote by $X\subset \SL_2$ the hypersurface given by $t=1$ and by $\Gamma\subset \A^3_\k$ the closed curve given by $t=xy-1=0$. 

Then, the birational morphism $\eta\colon \SL_2\to \A^3_{\k}$ 
yields  a group isomorphism
\[\begin{array}{rcl}
\{g\in \Aut(\SL_2)\mid g(X)=X\}& \iso & \{g\in \Aut(\A^3_\k)\mid \pi g=\pi, g(\Gamma)=\Gamma\}\\
 g & \mapsto & \eta g \eta^{-1}.
\end{array}\]
We moreover have 
\[
	\begin{array}{rcl}
		\{g\in \Aut(\SL_2)\mid g(X)=X\}&=&\{g\in \Aut(\SL_2)\mid 
		\pi\eta g=\pi\eta\},\\
		&=&\{g\in \Aut(\SL_2)\mid g^*(t)=t\}.
	\end{array}
\]

\end{proposition}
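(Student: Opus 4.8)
The plan is to apply Corollary~\ref{Cor:ExtensionAutomorphisms} to the coordinate ring $R = \k[t,u,x,y]/(xy-tu-1)$ of $\SL_2$. The defining relation reads $t^1u = xy-1$, so $R$ is exactly of the shape treated there with $n=1$ and $h = xy-1 \in \k[t,x,y]$, the associated ideal being $I = (t,\,xy-1) \subset \k[t,x,y]$. First I would verify the hypothesis of that corollary, namely that $h_0 = h(0,x,y) = xy-1$ lies in no $\k[w]$ for a variable $w$ of $\k[x,y]$: were $h_0 \in \k[w]$, its zero set would be a disjoint union of fibres of the trivial $\A^1$-bundle $(x,y)\mapsto w$ (Lemma~\ref{Lem:PolynomTwoVariables}), each an affine line over an extension of $\k$; but $\{xy=1\} \simeq \A^1_\k \setminus \{0\}$ is geometrically irreducible and not isomorphic to $\A^1_\k$, a contradiction. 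I would also record that $I$ is prime, since $\k[t,x,y]/I \simeq \k[x,y]/(xy-1)$ is a domain, and that its zero locus is precisely $\Gamma$.

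With these checks, Corollary~\ref{Cor:ExtensionAutomorphisms} yields the group isomorphism $\Aut_{\k[t]}(R) \iso \{\psi \in \Aut_{\k[t]}(\k[t,x,y]) \mid \psi(I)=I\}$, $\varphi \mapsto \varphi|_{\k[t,x,y]}$, and the remaining work is to translate this into the stated geometric isomorphism. On the source, $g^* \in \Aut_{\k[t]}(R)$ holds exactly when $g^*(t)=t$; since the regular function $t$ on $\SL_2$ equals $\pi\eta$, this is the same as $\pi\eta g = \pi\eta$, which is the last displayed equality. On the target, $\psi = (g')^*$ lies in the right-hand set exactly when $(g')^*(t)=t$, i.e. $\pi g'=\pi$, and when $g'$ preserves $V(I)=\Gamma$ (using that the prime $I$ is the radical ideal of $\Gamma$), i.e. $g'(\Gamma)=\Gamma$. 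Finally, via the inclusion $\eta^*\colon \k[t,x,y]\hookrightarrow R$, the restriction map corresponds to the relation $\eta g = g'\eta$, that is $g' = \eta g\eta^{-1}$, which is the stated conjugation homomorphism.

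It then remains to prove $\{g \mid g(X)=X\} = \{g \mid g^*(t)=t\}$. One inclusion is immediate: if $g^*(t)=t$, then $g$ preserves every fibre of $t$, in particular $X=\{t=1\}$. For the converse, suppose $g(X)=X$. Since $X$ is the zero locus of the principal prime ideal $(t-1)R$, this forces $g^*((t-1)R)=(t-1)R$, and as $R^*=\k^*$ by Lemma~\ref{lem:R_is_UFD} we obtain $g^*(t-1)=\mu(t-1)$ with $\mu \in \k^*$, i.e. $g^*(t)=\mu t + (1-\mu)$. The crucial step is to exclude $\mu \neq 1$, for which I would compare the fibres of $t$. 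The vanishing locus of $g^*(t)$ equals $g^{-1}(V(t))$, hence is isomorphic to $V(t)=\{t=0\}$, whose coordinate ring $\k[u][x,y]/(xy-1)$ has non-constant units, so $V(t) \simeq (\A^1_\k\setminus\{0\})\times\A^1_\k$. On the other hand $V(g^*(t)) = \{t = (\mu-1)/\mu\}$, and for $\mu\neq 1$ this is a fibre $\{t=\lambda\}$ with $\lambda\neq 0$, whose coordinate ring is $\k[x,y]$ (solving $u=(xy-1)/\lambda$) and thus has only constant units. Since $(\A^1_\k\setminus\{0\})\times\A^1_\k \not\simeq \A^2_\k$, we conclude $\mu=1$, hence $g^*(t)=t$.

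I expect this last step, the exclusion of $\mu\neq 1$, to be the main obstacle, everything preceding it being a formal transcription of Corollary~\ref{Cor:ExtensionAutomorphisms}. The decisive input is that the fibre of $t$ over $0$ is isomorphic to $(\A^1_\k\setminus\{0\})\times\A^1_\k$ while every other fibre is isomorphic to $\A^2_\k$, and that these are non-isomorphic, as detected by their units; this is exactly what rigidifies $t$ and upgrades the affine ambiguity $g^*(t)=\mu t+(1-\mu)$ to the equality $g^*(t)=t$.
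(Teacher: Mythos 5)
Your proposal is correct and follows essentially the same route as the paper: both reduce to Corollary~\ref{Cor:ExtensionAutomorphisms} with $n=1$, $h=xy-1$, use Lemma~\ref{lem:R_is_UFD} to get $g^*(t-1)=\mu(t-1)$, and rule out $\mu\neq 1$ by the fact that $\{t=0\}\simeq(\A^1_\k\setminus\{0\})\times\A^1_\k$ is the unique fibre of $\pi\eta$ not isomorphic to $\A^2_\k$. The only cosmetic differences are that the paper verifies $xy-1\notin\k[w]$ via factorial closedness (Lemma~\ref{Lemm:ProdKxKxy}) rather than via the geometry of $\{xy=1\}$, and phrases the last step through the induced automorphism of $\A^1_\k$ fixing $0$ and $1$ rather than by comparing units of the fibres directly.
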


\begin{proof}
Every automorphism $g$ of $\A^3_\k=\Spec(\k[t,x,y])$ yields an automorphism
$g^*\in \Aut_\k(\k[t,x,y])$. Moreover, the condition $\pi g=g$ corresponds to 
$g^*(t)=t$, and the condition $g(\Gamma)=\Gamma$ to $g^*(I)=I$, where $I\subset \k[t,x,y]$ is the ideal of $\Gamma$, generated by $t$ and $xy-1$. The isomorphism $\Aut(\A^3_\k)\to \Aut_\k(\k[t,x,y])$ then yields a bijection
\[
	\begin{array}{rcl}
		\{g\in \Aut(\A^3_\k)\mid \pi g=\pi, g(\Gamma)=\Gamma\} & \iso & 
		\{\psi\in \Aut_{\k[t]}(\k[t,x,y])\mid \psi(I)=I\} \, . \\
 		g & \mapsto & g^*
	\end{array}
\]
We then want to apply Corollary~\ref{Cor:ExtensionAutomorphisms} with $n=1$ and $h=xy-1$. To check that it is possible, we need to see that $h$ does not belong to $\k[w]$ for each variable $w\in\k[x,y]$. Indeed, $xy-1\in \k[w]$ would imply that $xy\in \k[w]$, and thus that $x,y\in \k[w]$, since $\k[w]$ is factorially closed (Lemma~\ref{Lemm:ProdKxKxy}). This would yield $\k[w]=\k[x,y]$, a contradiction.

We then apply Corollary~\ref{Cor:ExtensionAutomorphisms} and obtain a group isomorphism
 \[\begin{array}{rcl}
 \Aut_{\k[t]}(R)&\iso & \{\psi\in \Aut_{\k[t]}(\k[t,x,y])\mid \psi(I)=I\}\\
 \varphi & \mapsto & \varphi|_{\k[t,x,y]},\end{array}\] 
 where $R=\k[t,u,x,y]/(tu-xy-1)$. This yields then a group isomorphism
 \[\begin{array}{rcl}
\{g\in \Aut(\SL_2)\mid \pi\eta g=\pi\eta\}& \iso & \{g\in \Aut(\A^3_\k)\mid \pi g=\pi, g(\Gamma)=\Gamma\}\\
 g & \mapsto & \eta g \eta^{-1}.
\end{array}\]
It remains then to show that 
\[\{g\in \Aut(\SL_2)\mid \pi\eta g=\pi\eta\}=\{g\in \Aut(\SL_2)\mid g(X)=X\}.\]
The inclusion ``$\subset$'' follows from the equality $X=(\pi\eta)^{-1}(1)$. It remains then to show the inclusion ``$\supset$''.

To do this, we take $g\in \Aut(\SL_2)$ such that $g(X)=X$ and  prove that $\pi\eta g=\pi\eta$. The element $g$ corresponds to an element $g^{*}\in \Aut_\k(R)$. 
The fact that $g(X)=X$ is then equivalent to ask that $g^*$ sends the ideal generated by $t-1$ onto itself. Since $R^*=\k^*$ by Lemma~\ref{lem:R_is_UFD}, so $t-1$ is sent onto $\mu(t-1)$ for some $\mu\in \k^*$. This implies that the restriction of $g^*$ yields an automorphism of $\k[t]$, corresponding to an automorphism $\hat{g}\in \Aut(\A^1_\k)$ such that $\hat{g}\pi \eta=\pi \eta g$. As $(\pi\eta)^{-1}(0)$
is the only fibre of $\pi \eta$ that is not isomorphic to $\A^2_\k$, 
it has to be preserved under $g$. As the fibre $\pi^{-1}(1)=X$ is 
also preserved under $g$, we find that $\hat{g}$ is the identity, 
so $\pi\eta g=\pi\eta$ as desired.
\end{proof}
\begin{corollary}\label{Corollary:Embnu}The closed embedding \[
\begin{array}{cccc}
\nu\colon&\A^2_\k&\hookrightarrow& \SL_2\\
&(x,y)&\mapsto &\begin{pmatrix}
				  x & 1 \\
				  xy-1 & y
			     \end{pmatrix}\end{array}\] has the following property: an automorphism of $\A^2_\k$ extends to an automorphism of $\SL_2$, via $\nu$, if and only if it has Jacobian determinant equal to $\pm 1$.
\end{corollary}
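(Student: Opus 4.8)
The plan is to deduce the statement directly from Proposition~\ref{AutoSL2fixingX}, whose hypotheses apply to $h=xy-1$ (this was checked in its proof). First I would observe that $\nu(\A^2_\k)$ is exactly the hypersurface $X\subset\SL_2$ given by $t=1$, and that $\eta\circ\nu$ is the map $(x,y)\mapsto(1,x,y)$, so that $\eta$ identifies $X$ with the fibre $\{t=1\}\subset\A^3_\k$ compatibly with $\nu$. An automorphism $\varphi\in\Aut(\A^2_\k)$ extends via $\nu$ exactly when some $g\in\Aut(\SL_2)$ satisfies $g\nu=\nu\varphi$, and any such $g$ preserves $X$. By Proposition~\ref{AutoSL2fixingX}, $g$ then corresponds to an automorphism $\tilde g\in\Aut(\A^3_\k)$ with $\pi\tilde g=\pi$ (equivalently $\tilde g^*\in\Aut_{\k[t]}(\k[t,x,y])$) and $\tilde g(\Gamma)=\Gamma$, where $\Gamma$ is the curve $t=xy-1=0$; the identity $\eta\nu=(x,y)\mapsto(1,x,y)$ shows that $\tilde g$ restricts on the fibre $\{t=1\}$ to $\varphi$. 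Hence $\varphi$ extends via $\nu$ if and only if $\varphi=\tilde g|_{t=1}$ for some $\tilde g\in\Aut_{\k[t]}(\k[t,x,y])$ with $\tilde g(\Gamma)=\Gamma$.

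For the necessity of $\Jac(\varphi)=\pm1$ I would use that the Jacobian determinant of $\tilde g$ in the variables $x,y$ is a unit of $\k[t,x,y]$, hence lies in $\k^*$ and is in particular independent of $t$. Restricting to $t=1$ and to $t=0$ then yields $\Jac(\varphi)=\Jac(\tilde g|_{t=0})$. The automorphism $\tilde g|_{t=0}$ of the plane $\{t=0\}\cong\A^2_\k$ preserves $\Gamma$, that is, the hyperbola $xy=1$, so everything reduces to the claim that every $\psi\in\Aut(\A^2_\k)$ preserving $\{xy=1\}$ has Jacobian $\pm1$. To see this, note that $xy-1$ is irreducible and generates the ideal of the curve, so $\psi^*(xy-1)=c(xy-1)$ for some $c\in\k^*$; writing $\psi=(a,b)$ gives $ab=cxy+(1-c)$, a polynomial of degree at most $2$. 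Since $a,b$ are non-constant this forces $\deg a=\deg b=1$, so $\psi$ is affine, and then inspection of the degree-two part forces its linear part to be diagonal or anti-diagonal while the remaining terms force $c=1$; in either case the Jacobian is $\pm1$.

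For the converse I would produce, for each $\varphi$ with $\Jac(\varphi)=\pm1$, a suitable $\tilde g$, using the observation that any $\tilde g\in\Aut_{\k[t]}(\k[t,x,y])$ with $\tilde g|_{t=0}=\id$ automatically preserves $\Gamma$, as $\Gamma\subset\{t=0\}$. When $\Jac(\varphi)=1$ I would invoke that $\SAut(\A^2_\k)$ is generated by the elementary transformations $(x,y)\mapsto(x,y+f(x))$ and $(x,y)\mapsto(x+g(y),y)$ (a consequence of the Jung--van der Kulk theorem, cf.\ Lemma~\ref{Lemm:JungExplicit}), and extend each of them by the $t$-linear family $(t,x,y)\mapsto(t,x,y+tf(x))$, respectively $(t,x,y)\mapsto(t,x+tg(y),y)$; these are $\k[t]$-automorphisms equal to $\id$ at $t=0$ and to the given elementary map at $t=1$, so the product of the families attached to a factorisation of $\varphi$ extends $\varphi$. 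When $\Jac(\varphi)=-1$ I would write $\varphi=\tau\circ\sigma$ with $\sigma\colon(x,y)\mapsto(y,x)$ and $\tau\in\SAut(\A^2_\k)$, extend $\tau$ as above and $\sigma$ by the constant family $(t,x,y)\mapsto(t,y,x)$, which preserves $\Gamma$ since the swap preserves $xy=1$.

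The main obstacle is the necessity direction: its real content is that the Jacobian, being constant in $t$, transports the problem to the special fibre $t=0$, where the extra requirement of preserving the hyperbola $xy=1$ is what forces the value $\pm1$. Proving this last point is the crux, but the degree argument above keeps it short, and as a byproduct it classifies the relevant automorphisms of the special fibre as the torus $(x,y)\mapsto(\lambda x,\lambda^{-1}y)$ together with the swap. By contrast the sufficiency is comparatively soft, once one notices the ``specialise to the identity at $t=0$'' device.
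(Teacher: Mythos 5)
Your proposal is correct and follows essentially the same route as the paper: reduce to Proposition~\ref{AutoSL2fixingX}, use that the $(x,y)$-Jacobian of the induced $\k[t]$-automorphism of $\A^3_\k$ is a constant, specialise to $t=0$ where preservation of the hyperbola $xy=1$ forces the value $\pm 1$, and for sufficiency generate the Jacobian-$\pm1$ subgroup by the swap and elementary maps that visibly extend. The only (harmless) deviations are that you prove the $t=0$ step by a short self-contained degree argument where the paper cites \cite[Theorem~2~$(iii)$]{BlaSta}, and that you realise the extensions of the elementary generators by $t$-parametrised families in $\A^3_\k$ rather than by the explicit matrix automorphisms of $\SL_2$ used in the paper.
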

\begin{proof}
We denote by $X=\nu(\A^2_\k)\subset \SL_2$ the closed hypersurface given by 
\[
	X=\nu(\A^2_\k)=\left.\left\{
			    \begin{pmatrix}
				  x & 1 \\
				  xy-1 & y
			     \end{pmatrix}\right| (x,y)\in \A^2_\k\right\}=\left\{\left. 
			     \begin{pmatrix}
				  x & t \\
				  u & y
			     \end{pmatrix}\in \SL_2 \right| t= 1 \right\},
\]
 write $G=\{g\in \Aut(\SL_2)\mid g(X)=X\}$ and denote by $\tau\colon G\to \Aut(\A^2_\k)$ the group homomorphism such that 
 $g\circ\nu=\nu\circ \tau(g)$ for each $g\in G$.

We first prove that the subgroup $H=\{h\in \Aut(\A^2_\k)\mid \Jac(h)\pm 1\}$ is contained in $\tau(G)$. The group $H$ is generated by $(x,y)\mapsto (y,x)$, which is induced by 
\[
	\begin{pmatrix}
				  x & t \\
				  u & y
			     \end{pmatrix}\mapsto 
			     \begin{pmatrix}
				  y & t \\
				  u & x
			     \end{pmatrix} \, ,
\] 
and by automorphisms of the form $(x,y)\mapsto (x,y+p(x))$, $p\in \k[x]$, induced by
\[
				\begin{pmatrix}
				  x & t \\
				  u & y
			     \end{pmatrix}\mapsto \begin{pmatrix}
				  1 & 0 \\
				  p(x) & 1
			     \end{pmatrix}\cdot\begin{pmatrix}
				  x & t \\
				  u & y
			     \end{pmatrix}.\]

			     It remains to take $g\in G$ and to prove that 
			     $\tau(g)\in H$. Proposition~\ref{AutoSL2fixingX} implies that $g$ can be written as
			     \[
			     		g
					\begin{pmatrix}
						x & t \\
						u & y
					\end{pmatrix}
					=
					\begin{pmatrix}
						a(t,x,y) & t \\
						s(t,u,x,y) & b(t,x,y)
					\end{pmatrix} \, ,
			     \]
			     where $a,b\in \k[t,x,y]$, $s\in \k[t,u,x,y]$ and such that 
			     $\tilde{g}\colon \A^2_{\k} \to \A^2_{\k}$
			     given by $\tilde{g}(t,x,y) = (t,a(t,x,y),b(t,x,y))$ 
			     is an automorphism of $\A^3_\k$ that preserves the 
			     curve $\Gamma$ given by $t=xy-1=0$.
			     The Jacobian determinant of $\tilde{g}$ is $\mu=\frac{\partial a}{\partial x}\cdot \frac{\partial b}{\partial y}-\frac{\partial a}{\partial y}\cdot \frac{\partial b}{\partial x}\in \k^*$. Replacing with $t=0$, we obtain the automorphism of $\A^2_\k$ given by $(x,y)\mapsto (a(0,x,y),b(0,x,y))$, which preserves the curve with equation $xy=1$ and is thus of Jacobian $\pm 1$. Indeed, it is of the form $(x,y)\mapsto (\xi x,\xi^{-1} y)$ or $(x,y)\mapsto (\xi y,\xi^{-1} x)$, for some $\xi\in \k^*$ (see \cite[Theorem~2~$(iii)$]{BlaSta}). This shows that $\mu=\pm 1$. Replacing then $t=1$ we get that the automorphism $\tau(g)$ which is given 
			     by $\tau(g)(x,y) = (a(1,x,y),b(1,x,y))$ has 
			     Jacobian $\pm 1$.
\end{proof}
\begin{proof}[Proof of Theorem~$\ref{Thm:StdEmbSL2}$]
We first observe that the embeddings $\rho_1,\bar\nu\colon \A^2_\k\to \SL_2$ given by
\[
\begin{array}{ccccccccccc}
\rho_1\colon&\A^2_\k&\hookrightarrow& \SL_2 && \bar\nu\colon & \A^2_\k&\hookrightarrow& \SL_2\\
&(a,b)&\mapsto &\begin{pmatrix}
				  1 & b \\
				  a & ab+1
			     \end{pmatrix}, &&&(a,b)&\mapsto &\begin{pmatrix}
				  a & 1 \\
				  -ab-1 & -b
			     \end{pmatrix},\end{array}\]
are equivalent, under the map
			    $\begin{pmatrix}
				  x & t \\
				  u & y
			     \end{pmatrix}\mapsto \begin{pmatrix}
				  u & x \\
				  -y & -t
			     \end{pmatrix}$. 
The embedding 
\[
	\begin{array}{cccc}
		\nu\colon&\A^2_\k&\hookrightarrow& \SL_2\\
		&(x,y)&\mapsto &\begin{pmatrix}
				  x & 1 \\
				  xy-1 & y
			     \end{pmatrix},
	\end{array}
\]
satisfies $\bar\nu =\nu \tau$, where $\tau\in \Aut(\A^2_\k)$ is the automorphism of Jacobian $-1$ given by $\tau\colon (x,y)\mapsto (x,-y)$. Corollary~\ref{Corollary:Embnu} then 
			     implies that there exists $\hat\tau\in \Aut(\SL_2)$ such that $\hat\tau\nu=\nu\tau=\overline\nu$, i.e.~that the embeddings $\overline{\nu}$ and $\nu$ are equivalent, so $\nu$ and $\rho_1$ are equivalent.
Corollary~\ref{Corollary:Embnu} implies then that an automorphism of $\A^2_\k$ extends to an automorphism of $\SL_2$, via $\rho_1$, if and only if it has Jacobian determinant equal to $\pm 1$. It remains to prove Assertions~(\ref{Thm:StdEmbSL2ass1}) and (\ref{Thm:StdEmbSL2ass2}) of Theorem~\ref{Thm:StdEmbSL2}.
			     
Assertion~(\ref{Thm:StdEmbSL2ass2}) follows from the fact that the
group homomorphism 
\[
	\Jac\colon \Aut(\A^2_\k)\to \k^*
\] 
is surjective (taking for instance diagonal automorphisms), so there are automorphisms of Jacobian determinant in $\k^*\setminus \{\pm 1\}$ if and only if $\k$ contains at least $4$ elements.

To obtain Assertion~(\ref{Thm:StdEmbSL2ass1}), we observe that every closed embedding $\A^2_\k\to \SL_2$ having image in $\rho_1(\A^2)$ is of the form $\rho_1 \nu$ for some $\nu\in \Aut(\A^2_\k)$. Writing $d_\lambda\in \Aut(\A^2_\k)$ the automorphism given by $d_\lambda\colon (s,t)\mapsto (\lambda s,t)$, $\lambda\in \k^*$, we can write $\nu= d_\lambda \nu_1$ for some $\lambda\in \k^*$ and some $\nu_1\in \Aut(\A^2_\k)$ of Jacobian determinant equal to $1$. The result above implies that $\rho_1 \nu$ is equivalent to $\rho_1 d_\lambda=\rho_\lambda$.
			     
It remains to observe that $\rho_{\lambda'}=\rho_{\lambda}  d_{\lambda'\lambda^{-1}}$, 
so $\rho_{\lambda}$ and $\rho_{\lambda'}$ are equivalent if
		and only if $\lambda'\lambda^{-1}\in \{\pm 1\}$, which corresponds to $\lambda' = \pm \lambda$.
\end{proof}

\begin{remark}
	\label{Rem:HolomorphicA2inSL2}
	Over the field $\k = \C$ of complex numbers, all algebraic embeddings
	of $\C^2$ into $\SL_2(\C)$ with image equal to $\rho_1(\C^2)$
	are equivalent under holomorphic automorphisms of $\SL_2(\C)$.
	Indeed, according to 
	Theorem~$\ref{Thm:StdEmbSL2}$$\eqref{Thm:StdEmbSL2ass1}$ 
	it is enough to show that the embeddings
	\[
		\begin{array}{ccccccccccc}
		\rho_1\colon&\C^2&\hookrightarrow& \SL_2 && \rho_\lambda \colon & 
		\C^2 &\hookrightarrow& \SL_2\\
			     &(s,t )&\mapsto &
			     \begin{pmatrix}
				  1 & t \\
				  s & st+1
			     \end{pmatrix}, 
			     &&&(s,t)&\mapsto &
			     \begin{pmatrix}
				  1 & t \\
				  \lambda s & \lambda st+1
			     \end{pmatrix} 
		\end{array}
	\]
	are equivalent under a holomorphic automorphism
	for all $\lambda \in \C^\ast$. Such a holomorphic
	automorphism of $\SL_2(\C)$ is given by
	\[
		\begin{pmatrix}
			x & t \\
			u & y
		\end{pmatrix}
		\mapsto
		\begin{pmatrix}
			x & t \\
			\mu(x) u & y + \frac{\mu(x)-1}{x} tu
		\end{pmatrix} \, ,
	\]
	where $\mu \colon \C \to \C^\ast$
	is a holomorphic function with $\mu(1) = \lambda$ and $\mu(0) = 1$. 
\end{remark}

\section{Fibered embeddings of $\A^2_\k$ into $\SL_2$ 
and the proof of Theorem~$\ref{thm:Fibered_Embeddings}$}
In this section, we study fibred embeddings (as in \ref{Eq:FibredEmb}).

We will need the following simple description of the morphism $\eta\colon \SL_2\to \A^3_\k$ already studied in Proposition~\ref{AutoSL2fixingX}:
			     
\begin{lemma}\label{Lemma:BlGammaA3}Let $\Gamma\subset \A^3_\k=\Spec(\k[t,x,y])$ be the curve given by $t=xy-1=0$ and let $\eta\colon \Bl_\Gamma(\A^3_\k)\to \A^3_\k$ the blow-up of $\Gamma$. We then have a natural open embedding $\SL_2\hookrightarrow \Bl_\Gamma(\A^3_\k)$ such that the restriction of $\eta$ corresponds to $(t, u, x, y) \mapsto (t,x,y)$.
\end{lemma}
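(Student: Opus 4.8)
The plan is to realise $\Bl_\Gamma(\A^3_\k)$ very explicitly inside $\A^3_\k\times\Proj^1_\k$ and then to identify $\SL_2$ with one of its two standard affine charts. The curve $\Gamma$ is cut out by the ideal $I=(t,xy-1)\subset\k[t,x,y]$. Since $t$ is a nonzerodivisor and $xy-1$ remains a nonzerodivisor modulo $t$ (because $\k[x,y]/(xy-1)$ is a domain), the pair $(t,xy-1)$ is a regular sequence, so $\Gamma$ is a smooth complete-intersection curve. Hence the blow-up of $\A^3_\k$ along $\Gamma$ is the closure of the graph of the rational map $[t:xy-1]\colon\A^3_\k\dashrightarrow\Proj^1_\k$, and because the two generators form a regular sequence this closure is exactly the incidence variety
\[
	W=\{\,((t,x,y),[a:b])\in\A^3_\k\times\Proj^1_\k \mid a(xy-1)=bt\,\},
\]
with $\eta$ given by the projection onto the first factor.

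First I would treat the chart $\{a\neq 0\}$. Setting $u=b/a$, the defining equation $a(xy-1)=bt$ becomes $xy-1=tu$, so this chart is canonically isomorphic to
\[
	\Spec\bigl(\k[t,u,x,y]/(xy-tu-1)\bigr)=\SL_2,
\]
and under this identification the projection $\eta$ is precisely $(t,u,x,y)\mapsto(t,x,y)$. Since $\{a\neq 0\}$ is a standard affine open subset of $W\simeq\Bl_\Gamma(\A^3_\k)$, this exhibits the desired natural open embedding $\SL_2\hookrightarrow\Bl_\Gamma(\A^3_\k)$ together with the claimed form of $\eta$. Note that $u=b/a=(xy-1)/t$ recovers exactly the affine-modification description $\SL_2=\Spec(\k[t,x,y][(xy-1)/t])$ used earlier.

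To make the identification $W\simeq\Bl_\Gamma(\A^3_\k)$ fully transparent I would also record the complementary chart $\{b\neq 0\}$. There, setting $v=a/b$, the equation reads $t=v(xy-1)$, so $t$ is eliminated and the chart is isomorphic to $\Spec(\k[x,y,v])\cong\A^3_\k$. Both charts are irreducible of dimension three, they cover $W$, and their intersection is a dense open subset of each; hence $W$ is irreducible and smooth, i.e.~it is genuinely the blow-up of the smooth threefold $\A^3_\k$ along the smooth centre $\Gamma$.

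The only genuinely delicate point is the claim that the incidence variety $W$ coincides with the blow-up rather than containing it as a proper component, that is, that the fibre $\Proj^1_\k$ of $W$ over each point of $\Gamma$ lies in the closure of the graph. This is guaranteed by the regular-sequence property of $(t,xy-1)$, which forces the Rees algebra of $I$ to be the expected symmetric algebra; alternatively, it follows directly from the two-chart computation above, which already shows $W$ to be irreducible and smooth. I expect this to be the main (though mild) obstacle, while the remaining steps are routine chart identifications.
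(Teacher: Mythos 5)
Your proof is correct and follows essentially the same route as the paper: both realise $\Bl_\Gamma(\A^3_\k)$ as the incidence variety $\{a(xy-1)=bt\}\subset \A^3_\k\times\p^1_\k$ and identify $\SL_2$ with the affine chart where the coordinate corresponding to $xy-1$ is nonzero, on which $\eta$ becomes $(t,u,x,y)\mapsto(t,x,y)$. The only difference is that you additionally justify (via the regular sequence $(t,xy-1)$ and the two-chart computation) that this incidence variety really is the blow-up, a point the paper simply asserts.
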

\begin{proof}The blow-up of $\Gamma$ can be seen as
\[\begin{array}{cccccc}
\eta\colon &\Bl_\Gamma(\A^3_\k)&=&\{((t,x,y),[u:v])\in \A^3_\k\times \p^1_\k\mid tu=(xy-1)v\}&\to& \A^3_\k\\
& && ((t,x,y),[u:v]) & \mapsto & (t,x,y).\end{array}
\]
The open subset of $\Bl_\Gamma(\A^3_\k)$ given by $v\not=0$ is then naturally isomorphic to $\SL_2$, by identifying $((t,x,y),[u:1])$ with 
$(t, u, x, y) \in \SL_2$, and the birational morphism $\eta\colon \SL_2\to \A^3_\k$ sends $((t,x,y),[u:1])$ onto $(t,x,y)$.
\end{proof}

			     \subsection{Polynomials associated to fibred embeddings}

The following result associates to every fibred embedding $\A^2_\k\hookrightarrow \SL_2$ a polynomial in $\k[t,x,y]$, and gives some basic properties of this polynomial (which will be studied in more details after).

\begin{lemma}\label{Lemm:Polynomialp}
Let $\rho\colon \A^2_\k\hookrightarrow \SL_2$ be a fibred embedding, and let $Z\subset \A^3_\k$ be the closure of $\eta(\rho(\A^2_\k))$, where 
\[
	\eta\colon \SL_2\to \A^3_\k \, , \quad
				\begin{pmatrix}
				  x & t \\
				  u & y
			     \end{pmatrix}\mapsto (t,x,y) \, .
\] 
Then, $Z$ is given by $P(t,x,y)=0$, 
where $P\in \k[t,x,y]$ is a polynomial having the following properties:
			     
			     \begin{enumerate}[label=$(\arabic*)$, ref=\arabic*]
			     \item\label{TrivialfibrationOverOpen}
			     The ring $\k[t,\frac{1}{t}][x,y]/(P)$ is a polynomial ring in one variable over $\k[t,\frac{1}{t}]$ $($equivalently the morphism $\pi \colon Z\to \A^1_\k$ given by $(t,x,y)\mapsto t$ is a trivial $\A^1$-bundle over 
			     $\A^1_\k\setminus \{0\})$. 
			     \item\label{DegenerateFibre}
			     If $P$ is a variable of the $\k(t)$-algebra $\k(t)[x,y]$ 
			     $($which is always true if $\car(\k)=0$
			     by~\eqref{TrivialfibrationOverOpen} and the 
			     Abhyankar-Moh-Suzuki Theorem$)$, then the polynomial
			     $P(0,x,y)\in \k[x,y]$ is given by $\mu x^m(x-\lambda)$ or 
			     $\mu y^m(y-\lambda)$  
			     for some $\mu,\lambda\in \k^*$ 
			     and some $m\ge 0$, 
			     and $\rho(\A^2_\k)\subset \SL_2$ is the 
			     hypersurface given by $P=0$. 
			     \end{enumerate}
\end{lemma}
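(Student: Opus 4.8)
The plan is to study the birational morphism $\eta$ separately over the open set $\{t\neq 0\}$ and over the special fibre $\{t=0\}$: the former yields assertion~\eqref{TrivialfibrationOverOpen} together with the identification $\rho(\A^2_\k)=H_P$, while the latter controls $P(0,x,y)$.

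First, since $u=(xy-1)/t$ on $\SL_2$, the morphism $\eta$ restricts to an isomorphism $\SL_2\cap\{t\neq 0\}\iso\A^3_\k\cap\{t\neq 0\}$. As $\rho$ is a closed embedding and $\pi_2\rho=\pi_1$, composing with $\eta$ shows that $Z\cap\{t\neq 0\}=\eta(\rho(\A^2_\k))\cap\{t\neq 0\}$ is, via $\eta\rho$, isomorphic to $\A^2_\k\cap\{t\neq 0\}$ compatibly with the projection to $t$; hence $\pi\colon Z\to\A^1_\k$ is a trivial $\A^1$-bundle over $\A^1_\k\setminus\{0\}$ and $\k[t,\tfrac1t][x,y]/(P)\cong\k[t,\tfrac1t][s]$, which is~\eqref{TrivialfibrationOverOpen}. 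I would then check $\rho(\A^2_\k)=H_P$: since $P$ is a variable of $\k(t)[x,y]$ the trace $V(P)\cap\{t\neq 0\}$ is irreducible, so $H_P$ is irreducible, and $\rho(\A^2_\k)$ and $H_P$ are irreducible surfaces with the same dense trace over $\{t\neq 0\}$, hence equal.

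For~\eqref{DegenerateFibre} I would first reduce modulo $t$. As $P$ is irreducible with $t\nmid P$ it is primitive, so $P_0:=P(0,x,y)\neq 0$ is the class of $P$ in $\k[x,y]$; by Lemma~\ref{Lemm:DegVariables} (with $p=t$) there is a variable $w\in\k[x,y]$ with $P_0\in\k[w]$, say $P_0=\mu\prod_i(w-\alpha_i)^{m_i}$, so $V(P_0)$ is a disjoint union of fibres $\{w=\alpha_i\}\cong\A^1_\k$ of the trivial bundle $w\colon\A^2_\k\to\A^1_\k$. Next I use $\rho(\A^2_\k)=H_P\cong\A^2_\k$: under this isomorphism $\pi\eta$ becomes the coordinate projection $\pi_1$, so the scheme-theoretic fibre of $\pi\eta|_{H_P}$ over $t=0$ is a reduced $\A^1_\k$. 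This fibre is $\Spec\bigl((\k[x,y]/(xy-1,P_0))[u]\bigr)$, which forces $\k[x,y]/(xy-1,P_0)\cong\k$; that is, $V(P_0)\cap C$ is a single reduced $\k$-point, where $C=\{xy=1\}$.

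It remains to deduce the stated form. The key observation is that any fibre $L=\{w=\beta\}$ disjoint from $C$ is a coordinate axis: on $L\cong\A^1_\k$ the nowhere-vanishing function $xy-1$ is a constant, so $xy$ is constant on $L$, and a product of two one-variable polynomials is constant only if both are; thus either $L$ is a point (impossible) or $xy\equiv 0$ on $L$, whence $L=\{x=0\}$ or $\{y=0\}$ and $w=cx+\beta$ or $w=cy+\beta$ is affine in $x$ or in $y$. The crux is to produce such a disjoint fibre. Restricting to $C\cong\G_m=\Spec\k[\tau^{\pm1}]$ with $x=\tau,\ y=\tau^{-1}$, the single-reduced-point condition says $g(w|_C)$ generates the maximal ideal of one point, so the Laurent polynomial $w|_C-\alpha_{i_0}$ has exactly one simple zero in $\G_m$; hence it is a binomial of consecutive degrees and $w|_C$ has monomials only in degrees $\{0,a',a'+1\}$. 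If $w|_C$ involved both a strictly positive and a strictly negative power its width would be at least $2$, contradicting the simple zero; so $w|_C$ is a polynomial in $\tau=x$ (or in $\tau^{-1}=y$). If this polynomial has degree $1$, its value at $\tau=0$ is omitted on $\G_m$, giving a fibre disjoint from $C$ and finishing the argument. The remaining case, $w|_C$ a polynomial in $x$ of degree $\ge2$, is where I expect the main difficulty: one must exclude a single ``skew'' coordinate line meeting $C$ transversally in one point. I would rule it out via the structure of planar automorphisms (van der Kulk): such $w$ has leading form $cx^{d}$ and satisfies $w\equiv R(x)\pmod{xy-1}$ with $\deg R=d\ge2$; writing $w=R(x)+(xy-1)h$ and using that the leading form of a coordinate is a power of a single linear form, $h\neq0$ forces a factor $xy$ in the leading form (impossible), while $h=0$ gives $w\in\k[x]$ of degree $\ge2$, not a coordinate since its general fibres are disjoint unions of lines. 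Once $w$ is affine in $x$, say, we get $P_0=\tilde g(x)\in\k[x]$, and the single-reduced-point condition leaves exactly one nonzero simple root $\lambda$ together with a possible root $0$ of multiplicity $m$, so $P_0=\mu x^m(x-\lambda)$ (and symmetrically $\mu y^m(y-\lambda)$) with $\lambda\in\k^*$; combined with $\rho(\A^2_\k)=H_P$ this is the asserted conclusion.
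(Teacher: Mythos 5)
Your assertion~\eqref{TrivialfibrationOverOpen}, the reduction $P_0=P(0,x,y)\in\k[w]$ via Lemma~\ref{Lemm:DegVariables}, and the translation of the fibred-embedding condition into ``$V(P_0)\cap\{xy=1\}$ is a single reduced $\k$-point'' all agree with the paper. One preliminary step is missing: the inference ``the trace of $V(P)$ over $\{t\neq0\}$ is irreducible, so $H_P$ is irreducible'' is not automatic, because $H_P$ has pure codimension one in $\SL_2$ and could a priori contain the whole surface $\{t=0\}\cap\SL_2$ as a component; this happens exactly when $xy-1$ divides $P_0$, and it must be excluded before you may identify $\rho(\A^2_\k)$ with $H_P$ and compute the $t=0$ fibre of $H_P$ as $\Spec\bigl((\k[x,y]/(xy-1,P_0))[u]\bigr)$. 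The fix is cheap and already in the paper: $P_0\in\k[w]$ and $\k[w]$ factorially closed (Lemma~\ref{Lemm:ProdKxKxy}) would force $x,y\in\k[w]$ if $xy-1\in\k[w]$, a contradiction.

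The genuine gap is in the case you yourself flag as the crux, namely $w|_C$ a polynomial of degree $d\ge 2$ in $x$. Writing $w=R(x)+(xy-1)h$ with $\deg R=d$, your claim that ``$h\neq 0$ forces a factor $xy$ in the leading form of $w$'' is only correct when $\deg h+2\ge d$: if $0\le\deg h<d-2$, the term $(xy-1)h$ does not reach the top degree at all, the leading form of $w$ is just $c_dx^d$ --- a perfectly admissible power of a linear form --- and no contradiction is obtained. (For $d=2$ this bad range is empty, and for $d=3$ one can still finish because $h$ is then a nonzero constant and $w-\mathrm{const}=x\cdot(c_2x+c_3x^2+hy)$ is reducible, contradicting that a variable is prime; but for $d\ge 4$ your argument says nothing.) This is precisely where the paper changes tools: it passes to the projective closures of $\{w=0\}$ and of the conic in $\p^2$ over $\overline{\k}$, uses that the closure of $\{w=0\}$ meets the line at infinity in a single point, and applies B\'ezout to the single-reduced-point intersection to force $\deg\overline{\{w=0\}}=1$, hence $w=x-\lambda$ or $w=y-\lambda^{-1}$ up to scalar. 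You need either to import that B\'ezout argument or to supply a replacement covering $h\neq0$ with $\deg h<d-2$; as written, the degree $\ge 2$ case is open and the proof of~\eqref{DegenerateFibre} is incomplete.
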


\begin{proof}
We consider the morphisms
\[
	\begin{array}{ccccc}
		\SL_2=\Spec(\k[t,u,x,y] / (xy-tu-1)) & \stackrel{\eta}{\to} & 
		\A^3_\k & \stackrel{\pi }{\to}& \A^1_\k\\
		(t,u,x,y)& \mapsto & (t,x,y) & \mapsto & t\end{array}
\]
and observe that $\eta$ yields an isomorphism between the two open subset $(\SL_2)_t\subset \SL_2$ and ${(\A^3_\k)_t} \subset \A^3_\k$ given by $t\not=0$. The morphism $\eta\rho\colon \A^2_\k \to \A^3_\k$ restricts thus to a closed embedding ${(\A^2_\k)_t} \hookrightarrow {(\A^3_\k)_t}$, where 
${(\A^2_\k)_t} \subset \A^2_\k$ is the open subset where $t\not=0$. This yields (\ref{TrivialfibrationOverOpen}).

We now assume that $P\in \k[t,x,y]$ is a variable of the $\k(t)$-algebra $\k(t)[x,y]$. Applying Lemma~\ref{Lemm:DegVariables} we obtain that $P_0=P(0,x,y)\in \k[w]$ for some variable $w\in \k[x,y]$. In particular, $xy-1$ does not divide $P_0$ (otherwise, by Lemma~\ref{Lemm:ProdKxKxy} we would have $xy-1\in \k[w]$ and then $x,y\in \k[w]$, impossible). This implies that $Z\cap \Gamma$ is a $0$-dimensional scheme
(which is a priori not reduced), where $\Gamma\subset \A^3_\k$ is the closed curve given by $t=xy-1=0$. 
Recall that $\SL_2$ is an open subset of $\Bl_\Gamma(\A^3_\k)$ (Lemma~\ref{Lemma:BlGammaA3}) and that the exceptional divisor $E\subset \SL_2$ is simply given by $t=xy-1=0$ and is a trivial $\A^1$-bundle over $\Gamma$. Since 
the pull-back $H\subset \SL_2$ of $Z$ on $\SL_2$, given by the equation $P=0$ 
has all its irreducible components of pure codimension $1$ we get
$H=\rho(\A^2_\k)\simeq \A^2_\k$. As $\rho$ is a fibred embedding, the morphism $H\to \A^1_\k$ given by the projection on $t$ is a trivial $\A^1$-bundle. 
This implies that $Z\cap \Gamma$ consists of a single reduced point, which
is defined over $\k$ and thus of the form $q=(0,\lambda,\frac{1}{\lambda})\in \Gamma$ for some $\lambda\in \k^*$.

We can thus write $P_0\in \k[w]$ as $P_0=ab$ where $a,b\in \k[w]$ are such that $b(q)\not=0$, $a$ is irreducible and $a(q)=0$. This implies that $a$ is a polynomial of degree $1$ in $w$, so we can assume that $a=w$ (by replacing $w$ with $a$).

We now show that $w=x-\lambda$ or $w=y-\frac{1}{\lambda}$ (after replacing $w$ with $\mu w$, $\mu\in \k^*$). As $w$ is a variable in $\k[x,y]$, the curve $C\subset \A^2_\k$ defined by $w=0$ is isomorphic to $\A_{\k}^1$, and its closure in $\p^2_\k$ is a curve $\overline{C}$ passing through exactly one point $q_0$ of the line at infinity $L_\infty=\p^2_\k\setminus \A^2_\k$.  The closure of $\Gamma$ is then $\overline{\Gamma}\subset \p^2_\k$ given by $xy-z^2=0$, and $\overline{\Gamma}\setminus \Gamma=\{[1:0:0],[0:1:0]\}$. To get $w=x-\lambda$ or $w=y-\frac{1}{\lambda}$ we then only need to show that $\overline{C}$ is a line through $[1:0:0]$ or $[0:1:0]$. We extend the scalars to an algebraically closed field and apply B\'ezout Theorem to get $2\deg(\overline{C})=1+\sum m_{q_0'}$ where the sum is taken over all points
$q_0'$ of $\overline{C}$ infinitely near to $q_0$ and lying on $\overline{\Gamma}$ 
and $m_{q_0'}$ denotes the multiplicity of $\overline{C}$ at $q_0'$ 
(follows from the fact that $C\cap \Gamma$ is the reduced point $q$). If $\overline{C}$ is a line, we get the result, since $q \in \overline{C}$. If $\overline{C}$ has degree at least $2$, then it is tangent to $L_\infty$ at the point $q_0$,
because, otherwise 
$m_{q_0} = (\overline{C}, L_{\infty})_{q_0} = \deg(\overline{C})$ by B\'ezout's Theorem and if $L$ denotes the tangent line of $\overline{C}$ at $q_0$,
then $\deg(\overline{C}) \geq (\overline{C}, L)_{q_0} \geq 1 + m_{q_0}$,
a contradiction.
As $L_\infty$ and $\overline{\Gamma}$ have transversal intersections, this implies that only $q_0$ belongs to $\overline{\Gamma}$ and yields 
$2\deg(\overline{C})-1 = m_{q_0} \le \deg(\overline{C})$, yielding $\deg(\overline{C})\le 1$ as desired.

Now that $w=x-\lambda$ is proven (respectively $w=y-\frac{1}{\lambda}$), we obtain $P_0=wb$ for some $b\in \k[x]$ (respectively $b\in \k[y]$) which does not vanish on any point of $\Gamma$. Hence, $P_0$ is equal to $x^m(x-\lambda)$ or $y^m(y-\frac{1}{\lambda})$ for some $m\ge 0$, after replacing $P$ with $\mu P$, $\mu\in \k^*$. 
\end{proof}
 
We now give an example which shows that the polynomial $P$ given in Lemma~\ref{Lemm:Polynomialp} is not always a variable of the $\k(t)$-algebra $\k(t)[x,y]$ (even if $P$ is always such a variable when $\car(\k)=0$).

\begin{lemma}\label{Lemm:ExampleNotVarkt}Let $\k$ be a field of characteristic $p>0$ and let $q \geq 2$ be an integer that does not divide $p$. Then, the polynomial \[P = (x-1)-t^p(y^p-(x-1)^q)^p\in \k[t,x,y]\] has the following properties:
	\begin{enumerate}[label=$(\arabic*)$, ref=\arabic*]
	\item\label{Pisnotvart}
	$P$ is not a variable of the $\k(t)$-algebra $\k(t)[x,y]$.
	\item\label{ZPtirivialA1b}
	The hypersurface $Z_P\subset \A^3_\k=\Spec(\k[t,x,y])$ 
	given by $P=0$ satisfies that $Z_P\to \A^1_\k$, $(t,x,y)\mapsto t$ 
	is a trivial $\A^1$-bundle $($in particular, $Z_P$ is isomorphic to $\A^2_\k)$.
	\item\label{HPtirivialA1b}
	The hypersurface $H_P\subset \SL_2=\Spec(\k[t,u,x,y] / (xy-tu-1))$ 
	given by $P=0$ is the image of a fibred embedding 
	$\A^2_\k\hookrightarrow \SL_2$ $($in particular, $H_P$ 
	is isomorphic to $\A^2_\k)$.
	\end{enumerate}
\end{lemma}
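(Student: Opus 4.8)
The plan is to treat the three assertions by explicit constructions tailored to the Frobenius structure of $P$, rather than through the general criteria of the previous sections. The essential point is that Corollary~\ref{Cor:ZinA3} cannot be used here: assertion~\eqref{Pisnotvart} says that $P$ is \emph{not} a variable of $\k(t)[x,y]$, so the standing hypothesis of that corollary fails, and this is exactly the subtlety the example is designed to display. All computations rest on the characteristic-$p$ identity
\[
	P=(x-1)-t^p\bigl(y^p-(x-1)^q\bigr)^p=(x-1)+t^p(x-1)^{pq}-t^py^{p^2}.
\]

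For assertion~\eqref{Pisnotvart} the difficulty is that over $\k(t)$ the coefficient $t^p$ is a $p$-th power but not a $p^2$-th power, so Lemma~\ref{Lemm:FormofA1} does not apply directly over $\k(t)$. I would overcome this by base change. By Lemma~\ref{Lem:PolynomTwoVariables}, $P$ is a variable of $\k(t)[x,y]$ if and only if $\k(t)(s)[x,y]/(P-s)$ is a polynomial ring in one variable over $\k(t)(s)$ for a fresh transcendental $s$; as this property is inherited by the field extension $\k(t)(s)\subseteq L(s)$ with $L=\k(t^{1/p})$, it suffices to show that $L(s)[x,y]/(P-s)$ is \emph{not} a polynomial ring in one variable over $L(s)$. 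Now over $L$ one has $t^p=(t^{1/p})^{p^2}$, so after the translation $x\mapsto x-1$ the polynomial $P-s$ becomes
\[
	x+a^{p^2}x^{pq}-b^{p^2}y^{p^2}-s,\qquad a=b=t^{1/p}\in L^\ast,
\]
with $s$ in the role of the transcendental parameter. Lemma~\ref{Lemm:FormofA1}, applied over the field $L$ (with $q\ge 2$ not a multiple of $p$), states precisely that the associated curve is not isomorphic to $\A^1_{L(s)}$, which gives the required non-triviality and hence assertion~\eqref{Pisnotvart}.

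For assertion~\eqref{ZPtirivialA1b} I would exhibit an explicit trivialisation. On $Z_P$ put $z:=t\bigl(y^p-(x-1)^q\bigr)$; the relation $P=0$ gives $z^p=x-1$. Feeding this back and using $z^{pq}=(z^q)^p$ together with the Frobenius factorisation $y^p-z^{pq}=(y-z^q)^p$, the defining relation becomes $z=t\,(y-z^q)^p$. Hence with $w:=y-z^q$ one has $z=tw^p$, and therefore, on $Z_P$,
\[
	x=1+t^pw^{p^2},\qquad y=w+t^qw^{pq}.
\]
As $w=y-t^q\bigl(y^p-(x-1)^q\bigr)^q$ is a regular function on $Z_P$, these formulas show $\k[Z_P]=\k[t,w]$, so
\[
	(t,w)\longmapsto\bigl(t,\,1+t^pw^{p^2},\,w+t^qw^{pq}\bigr)
\]
is an isomorphism $\A^2_\k\iso Z_P$ taking the first projection to $\pi\colon(t,x,y)\mapsto t$. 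This makes $\pi$ a trivial $\A^1$-bundle and in particular $Z_P\simeq\A^2_\k$; the only verification needed is the routine (Frobenius) check that the two substitutions are mutually inverse.

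For assertion~\eqref{HPtirivialA1b} I would upgrade the parametrisation of $Z_P$ to a fibred embedding into $\SL_2$ by producing $u=(xy-1)/t$ as a polynomial. The obstruction sits at $t=0$: since $P(0,x,y)=x-1$, the intersection $Z_P\cap\Gamma$ is the single point $(0,1,1)$, which corresponds to $w=1$, and one computes $xy-1\equiv w-1\pmod t$, so $u$ fails to be polynomial in $(t,w)$ unless $w-1$ is divisible by $t$. I would therefore reparametrise by $w=1+ts$; then
\[
	xy-1=(w-1)+t^qw^{pq}+t^pw^{p^2+1}+t^{p+q}w^{p^2+pq}
\]
is divisible by $t$, so that $u=s+t^{q-1}w^{pq}+t^{p-1}w^{p^2+1}+t^{p+q-1}w^{p^2+pq}\in\k[s,t]$. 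The map $(s,t)\mapsto\left(\begin{smallmatrix} x & t\\ u & y\end{smallmatrix}\right)$ with $x=1+t^pw^{p^2}$ and $y=w+t^qw^{pq}$ is then the desired fibred embedding: it lands in $\SL_2$ since $tu=xy-1$ by construction, it lands in $H_P$ since $(x,y)$ satisfies $P=0$ by assertion~\eqref{ZPtirivialA1b}, and it is a closed embedding because $t$ and $s=u-t^{q-1}w^{pq}-t^{p-1}w^{p^2+1}-t^{p+q-1}w^{p^2+pq}$ both lie in the image of the comorphism (here $w=y-t^q(y^p-(x-1)^q)^q$). Comparing the open locus $t\ne 0$ with $Z_P$ and the fibre over $t=0$ with the line $\{(0,u,1,1)\}=H_P\cap\{t=0\}$ identifies the image with $H_P$, so $H_P\simeq\A^2_\k$. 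I expect this final step, namely verifying that the explicit embedding surjects onto all of $H_P$ (in particular the correct behaviour over $t=0$) rather than onto a proper subvariety, to be the principal bookkeeping obstacle.
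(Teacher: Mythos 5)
Your proposal is correct, and for the first two assertions it essentially reproduces the paper's argument: for \eqref{Pisnotvart} the paper simply cites Corollary~\ref{Cor:NotVariablesCharP} (after expanding $P=(x-1)+t^p(x-1)^{pq}-t^py^{p^2}$ and translating $x$), and the proof of that corollary is exactly the base change to $\k(t^{1/p})$ plus Lemma~\ref{Lemm:FormofA1} that you carry out by hand; for \eqref{ZPtirivialA1b} your parametrisation $(t,w)\mapsto(t,1+t^pw^{p^2},w+t^qw^{pq})$ with inverse $w=y-t^q(y^p-(x-1)^q)^q$ is literally the paper's pair $(\tau,\chi)$, and your derivation via $z=t(y^p-(x-1)^q)$ is a nice way to see where it comes from. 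The genuine divergence is in \eqref{HPtirivialA1b}: the paper observes that $\pi\circ\eta\colon H_P\to\A^1_\k$ is a trivial $\A^1$-bundle over $t\neq 0$, checks that the zero fibre is a single affine line (since $\{x=1\}$ meets $\{xy=1\}$ transversally in one point), and then invokes Lemma~\ref{Lem:BCWdim1} (i.e.\ Asanuma and Bass--Connell--Wright) to conclude triviality of the bundle; you instead substitute $w=1+ts$ so that $xy-1$ becomes divisible by $t$ and write down $u=(xy-1)/t$ as an explicit polynomial in $(s,t)$, producing the fibred embedding directly. Your route is more elementary and constructive (it exhibits the embedding rather than deducing its existence), at the cost of the final bookkeeping you flag: checking surjectivity of the comorphism and that the image is all of $H_P$, including the fibre over $t=0$; both checks go through as you describe (the set-theoretic identification suffices since the image of a closed embedding is a reduced closed subscheme and $H_P$ is the reduced hypersurface $P=0$). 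The paper's route is shorter given the machinery already in place but less explicit.
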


\begin{proof}(\ref{Pisnotvart}):
	Replacing $x$ with $x+1$, it suffices to show that $x-t^p(y^p-x^q)^p$ is not a variable of the $\k(t)$-algebra $\k(t)[x,y]$. This follows from Corollary~\ref{Cor:NotVariablesCharP}.

	(\ref{ZPtirivialA1b}) We consider the morphisms 
	\[
		\begin{array}{cccc}
		\tau\colon &\A^2_\k & \to & Z_P\\
		&(s,t)&\mapsto& (t, t^p s^{p^2}+1, t^q s^{pq}+s)\\
		\vspace{0.1cm}
	
		\chi\colon &Z_P & \to & \A^2_\k\\
		& (t,x,y) & \mapsto & (y-t^q(y^p-(x-1)^q)^q,t)\end{array}
	\]
	and check that $\tau \circ \chi=\mathrm{id}_{Z_P}$, 
	$\chi \circ \tau=\mathrm{id}_{\A^2_\k}$.
	
	(\ref{HPtirivialA1b}): The morphism $\eta\colon H_P\to Z_P$, $(t,u,x,y)\mapsto (t,x,y)$ being an isomorphism on the subsets given by $t\not=0$, the morphism 
	$\pi \circ \eta \colon H_P\to \A^1_{\k}$, $(t,u,x,y)\mapsto t$ is a trivial $\A^1$-bundle over $\A^1_{\k} \setminus \{0\}$. The zero fibre is moreover isomorphic to $\A^1_\k$ since $P(0,x,y)=x-1$ and the line $\{x=1\}$ intersects the conic 
	$\{xy=1\}$ transversally in one point 
	(follows from Lemma~\ref{Lemma:BlGammaA3}). 
	By Lemma~\ref{Lem:BCWdim1} it follows that 
	$\pi \circ \eta \colon H_P \to \A^1_{\k}$ is a trivial $\A^1$-bundle. 
	Hence $H_P$ is isomorphic to $\A^2_{\k}$
	and  is the image of a fibred embedding 
	$\A^2_{\k} \hookrightarrow \SL_2$.
\end{proof}

We now start from a polynomial $P\in \k[t,x,y]$ that is a variable of the $\k(t)$-algebra $\k(t)[x,y]$ and determine when this one comes from a fibred embedding $\A^2_\k\hookrightarrow \SL_2$, by the process determined in Lemma~\ref{Lemm:Polynomialp}. This yields the following result, which corresponds to Part~(\ref{ThmFibered1}) of Theorem~\ref{thm:Fibered_Embeddings}.

\begin{proposition}\label{Prop:LiftPAssertions}
Let $\k$ be any field, let $P\in \k[t,x,y]$ be a polynomial that is a variable of the $\k(t)$-algebra $\k(t)[x,y]$, and let $H_P\subset \SL_2=\Spec(\k[t,u,x,y] / (xy-tu-1))$ and $Z_P\subset \A^3_\k=\Spec(\k[t, x, y])$ be the hypersurfaces 
given by $P=0$.

The following conditions are equivalent:
\begin{enumerate}[label=$(\alph*)$, ref=\alph*]
\item\label{ThmEqa}
The hypersurface $H_P\subset \SL_2$ is isomorphic to $\A^2_\k$.
\item\label{ThmEqb}
The hypersurface $H_P\subset \SL_2$ is the image of a fibred embedding $\A^2_\k\hookrightarrow \SL_2$.
\item\label{ThmEqc}
The fibre
of $Z_P \to \A^1_{\k}$, $(t, x, y) \mapsto t$ over every closed point 
of $\A_{\k}^1\setminus \{0\}$
is isomorphic to $\A^1$ and the 
polynomial $P(0,x,y)\in \k[x,y]$ is of the form  $\mu x^m(x-\lambda)$ or 
			     $\mu y^m(y-\lambda)$  
			     for some $\mu,\lambda\in \k^*$  
			     and some $m\ge 0$. 
			
\end{enumerate}
\end{proposition}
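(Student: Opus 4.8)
The plan is to establish the two equivalences $\eqref{ThmEqa}\Leftrightarrow\eqref{ThmEqb}$ and $\eqref{ThmEqb}\Leftrightarrow\eqref{ThmEqc}$, using repeatedly that the birational morphism $\eta\colon\SL_2\to\A^3_\k$ restricts to an isomorphism over $\{t\neq0\}$, so that $H_P$ and $Z_P$ are identified over $\A^1_\k\setminus\{0\}$ compatibly with the two maps to $\A^1_\k$ recording $t$. On $\SL_2$ this map is the projection $\pi_2=\pi\eta$, and a closed embedding $\A^2_\k\hookrightarrow\SL_2$ is fibred precisely when it intertwines the second-coordinate projection $\A^2_\k\to\A^1_\k$, $(s,t)\mapsto t$, with $\pi_2$. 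The guiding principle is therefore that \emph{$H_P$ is the image of a fibred embedding if and only if the restriction $\pi_2|_{H_P}\colon H_P\to\A^1_\k$ is a trivial $\A^1$-bundle}. The implication $\eqref{ThmEqb}\Rightarrow\eqref{ThmEqa}$ is immediate. For $\eqref{ThmEqb}\Rightarrow\eqref{ThmEqc}$ I would apply Lemma~\ref{Lemm:Polynomialp} to a fibred embedding with image $H_P$: its associated polynomial defines the same surface as $P$ over $\{t\neq0\}$ and hence, up to content, equals $P$; Part~\eqref{TrivialfibrationOverOpen} then shows $Z_P\to\A^1_\k$ is a trivial $\A^1$-bundle over $\A^1_\k\setminus\{0\}$, so all its closed fibres there are affine lines, and Part~\eqref{DegenerateFibre}, which applies because $P$ is a variable of the $\k(t)$-algebra by hypothesis, gives the asserted shape of $P(0,x,y)$.

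For $\eqref{ThmEqa}\Rightarrow\eqref{ThmEqb}$ I would show that $\pi_2$ is a coordinate on $H_P$. Fixing an isomorphism $H_P\simeq\A^2_\k$, the function $\pi_2|_{H_P}$ becomes a polynomial $T$ in the two coordinates of $\A^2_\k$. Since $P$ is a variable of the $\k(t)$-algebra $\k(t)[x,y]$, the generic fibre of $Z_P\to\A^1_\k$, hence that of $H_P\to\A^1_\k$ (the two agree over $\{t\neq0\}$), is $\A^1_{\k(t)}$; transported through the isomorphism, this is exactly condition~\eqref{Pktxyfield} of Lemma~\ref{Lem:PolynomTwoVariables} for $T$, so that lemma yields that $T$ is a variable. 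Composing $H_P\simeq\A^2_\k$ with an automorphism of $\A^2_\k$ carrying $T$ to the second coordinate produces an isomorphism $\varphi\colon\A^2_\k\to H_P$ with $\pi_2\circ\varphi=\mathrm{pr}_2$, that is, a fibred embedding with image $H_P$.

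The heart of the matter is $\eqref{ThmEqc}\Rightarrow\eqref{ThmEqb}$, where one must understand $H_P$ over $t=0$. The generic fibre of $\pi_2|_{H_P}$ is $\A^1_{\k(t)}$ as before, and its closed fibres over $\A^1_\k\setminus\{0\}$ are affine lines by the hypothesis together with $H_P\simeq Z_P$ over $\{t\neq0\}$. For the fibre over $0$ I would use Lemma~\ref{Lemma:BlGammaA3}: the divisor $\{t=0\}\subset\SL_2$ has coordinate ring $\k[u,x,y]/(xy-1)=\k[u][x^{\pm1}]$ with $y=x^{-1}$, and the image of $P$ in it is $P(0,x,x^{-1})$. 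When $P(0,x,y)=\mu x^m(x-\lambda)$ (resp.\ $\mu y^m(y-\lambda)$), the invertibility of $x$ (resp.\ $y$) reduces the generated ideal to $(x-\lambda)$ (resp.\ $(y-\lambda)$), so the scheme-theoretic fibre of $H_P$ over $0$ is $\Spec(\k[u])=\A^1_\k$; the same non-vanishing of $P(0,x,y)$ on the curve $\Gamma=\{xy=1\}$ shows that $H_P$ has no component inside $\{t=0\}$ and is thus an irreducible affine surface. With every closed fibre and the generic fibre isomorphic to the affine line, Lemma~\ref{Lem:BCWdim1} (applied with $U=\A^1_\k$) makes $\pi_2|_{H_P}$ a trivial $\A^1$-bundle over $\A^1_\k$, and the resulting trivialisation $\varphi\colon\A^2_\k\to H_P$ with $\pi_2\circ\varphi=\mathrm{pr}_2$ is a fibred embedding with image $H_P$, exactly as above.

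The step I expect to be the main obstacle is this passage across $t=0$ in $\eqref{ThmEqc}\Rightarrow\eqref{ThmEqb}$: one must simultaneously identify the zero fibre as a single affine line and exclude a spurious two-dimensional component of $H_P$ lying over $t=0$, both of which rest on the explicit form of $P(0,x,y)$ dictated by~\eqref{ThmEqc} through its intersection with $\Gamma$, along which $x$ and $y$ are invertible. Once the zero fibre is under control, Lemma~\ref{Lem:BCWdim1} performs the global gluing, and the dictionary between trivial $\A^1$-bundles $H_P\to\A^1_\k$ and fibred embeddings is purely formal.
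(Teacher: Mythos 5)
Your proof is correct and follows essentially the same route as the paper: the same three ingredients (Lemma~\ref{Lemm:Polynomialp} for $\eqref{ThmEqb}\Rightarrow\eqref{ThmEqc}$, Lemma~\ref{Lem:PolynomTwoVariables} for $\eqref{ThmEqa}\Rightarrow\eqref{ThmEqb}$, and the zero-fibre analysis via Lemma~\ref{Lemma:BlGammaA3} plus Lemma~\ref{Lem:BCWdim1} for the remaining implication) appear in the paper's proof, which organises them as the cycle $\eqref{ThmEqa}\Rightarrow\eqref{ThmEqb}\Rightarrow\eqref{ThmEqc}\Rightarrow\eqref{ThmEqa}$ rather than your two equivalences. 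The difference is purely one of bookkeeping, not of substance.
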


\begin{proof}
We will use the morphisms 
\[
	\eta\colon \SL_2\to \A^3_\k \, , \quad
				\begin{pmatrix}
				  x & t \\
				  u & y
			     \end{pmatrix}\mapsto (t,x,y),\quad \pi\colon \A^3_\k\to \A^1_\k\, , \quad (t,x,y)\mapsto t \, .
\]

	$\eqref{ThmEqa} \Rightarrow \eqref{ThmEqb}$: 
	Proving that $H_P$ is the image of a fibred embedding 
	$\A^2_\k \hookrightarrow \SL_2$ is equivalent to ask that
	$\pi \circ \eta \colon H_P \to\A^1_\k$ is a trivial $\A^1$-bundle.
	Since $P$ is a variable of the $\k(t)$-algebra
	$\k(t)[x, y]$, it follows that the generic fibre of $\pi \colon Z_P \to \A^1_{\k}$
	is isomorphic to $\A^1_{\k(t)}$.
	Moreover, $\eta \colon \SL_2 \to \A^3_\k$ is an isomorphism
	over $\{ t \neq 0 \}$, so the generic fibre of $\pi \circ \eta$
	is also isomorphic to $\A^1_{\k(t)}$.
	The fact that $H_P$ is isomorphic to $\A^2_\k$ 
	(which is the hypothesis~\eqref{ThmEqa}) implies that 
	$\pi \circ \eta\colon H_P\to \A^1_\k$
	is a trivial $\A^1$-bundle, by
	Lemma~\ref{Lem:PolynomTwoVariables} 
	($\eqref{Pktxyfield} \Rightarrow \eqref{Ptrivial}$).
	
	$\eqref{ThmEqb} \Rightarrow \eqref{ThmEqc}$: Follows from 
	Lemma~\ref{Lemm:Polynomialp}\eqref{TrivialfibrationOverOpen} and 
	\eqref{DegenerateFibre}. 
	
	$\eqref{ThmEqc} \Rightarrow \eqref{ThmEqa}$: Since
	$\eta \colon \SL_2 \to \A^3_{\k}$ is an isomorphism over the open subset
	$\{ t \neq 0 \}$, it follows that all fibres of
	$\pi \circ \eta\colon H_P \to \A^1_\k$ over closed points 
	of $\A_{\k}^1\setminus\{0\}$
	are isomorphic to $\A^1$. Moreover, the fibre
	of $\pi \circ \eta$ over $0$ is isomorphic to $\A_{\k}^1$,
	since the restriction 
	$\eta |_{\{t=0\}} \colon \{ t = 0 \} \to \{ t = xy - 1 = 0 \} \subset 
	\{ 0 \} \times \A^2_\k$ 
	is a trivial $\A^1$-bundle over the curve $\{ t = xy - 1 = 0 \}$
	and since $\{ P(0, x, y) = 0 \}$ intersects $\{ xy = 1 \}$ in exactly one 
	point, transversally. The generic fibre of $\pi \circ \eta \colon H_P \to \A^1_\k$
	being isomorphic to $\A^1_{\k(t)}$, it follows from Lemma~\ref{Lem:BCWdim1} 
	that $\pi \circ \eta\colon H_P\to \A^1_\k$ is a trivial $\A^1$-bundle 
	and thus $H_P$ is isomorphic to the affine plane $\A^2_{\k}$, 
	which proves \eqref{ThmEqa}.
\end{proof}

\begin{example}\label{LargeFamilySimpleExamples}
For each $n\ge 1,m\ge 0$, $\mu\in \k^*$ and $q\in \k[t,x]$, the polynomial 
\[
	P(t,x,y)=t^n y+\mu x^m(x-1)+tq(t,x)\in \k[t,x,y]
\]
defines an hypersurface $H_{P}\subset \SL_2$ which is the image of a fibred embedding. Indeed, since $P$ has degree $1$ in $y$ with coefficent $t^n$, it is a variable of $\k[t,t^{-1}][x,y]$. We can thus apply Proposition~\ref{Prop:LiftPAssertions} and only need to check that $P(0,x,y)=\mu x^m(x-1)$ 
is of the desired form (as in Assertion~\eqref{ThmEqc}).
\end{example}

\subsection{Determining when two fibred embeddings are equivalent}
In this section, we consider embeddings satisfying the conditions of 
Proposition~\ref{Prop:LiftPAssertions} (or equivalently of Theorem~\ref{thm:Fibered_Embeddings}(\ref{ThmFibered1})) and determine when two of these are equivalent, by proving Theorem~\ref{thm:Fibered_Embeddings}(\ref{ThmFibered2}). We first characterise the case where the integer $m$ of Proposition~\ref{Prop:LiftPAssertions} (or equivalently of Lemma~\ref{Lemm:Polynomialp} or Theorem~\ref{thm:Fibered_Embeddings}(\ref{ThmFibered1})) is equal to zero.

\begin{lemma}\label{Lem:M0}
Let $\k$ be any field and $P\in \k[t,x,y]$ be a polynomial that is a variable of the $\k(t)$-algebra $\k(t)[x,y]$, and let $H_P\subset \SL_2=\Spec(\k[t,u,x,y] / (xy-tu-1))$ and $Z_P\subset \A^3_\k=\Spec(\k[t, x, y])$ 
be the hypersurfaces given by $P=0$.

Assume that $H_P$ is isomorphic to $\A^2_\k$, which implies that $P(0,x,y)\in \k[x,y]$ is of the form  $\mu x^m(x-\lambda)$ or $\mu y^m(y-\lambda)$ for some $\mu,\lambda\in \k^*$ and some $m\ge 0$. Then, the following conditions are equivalent:
			     \begin{enumerate}[label=$(\alph*)$, ref=\alph*]
			     \item\label{m0}
			     $m=0$;
			     \item\label{Pvarkt}
			     $P$ is a variable of the $\k[t]$-algebra $\k[t][x,y]$;
			     \item\label{ZA2fibr}
			     There is an isomorphism $\varphi\colon \A^2_\k\iso Z_P$ 
			     such that $\pi\varphi$ is the projection $(t, x)\mapsto t$.
			     \item\label{ZA2}
			     There is an isomorphism $\varphi\colon \A^2_\k\iso Z_P$.
			     \item\label{AutoStd}
			     There exist $\varphi\in \Aut(\SL_2)$  such that 
			     $\varphi(H_P)=\rho_1(\A^2_{\k})$, where $\rho_1$ 
			     is the standard embedding;
			     \item\label{AutofibStd}
			     There exist $\varphi\in \Aut(\SL_2)$  such that 
			     $\varphi(H_P)=\rho_1(\A^2_{\k})$ and 
			     $\varphi^*(t)=t$.
			     \end{enumerate}
\end{lemma}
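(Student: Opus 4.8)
The plan is to establish the cycle $\eqref{m0}\Leftrightarrow\eqref{Pvarkt}\Leftrightarrow\eqref{ZA2fibr}\Leftrightarrow\eqref{ZA2}$ together with $\eqref{Pvarkt}\Rightarrow\eqref{AutofibStd}\Rightarrow\eqref{AutoStd}\Rightarrow\eqref{m0}$. The equivalences $\eqref{Pvarkt}\Leftrightarrow\eqref{ZA2fibr}\Leftrightarrow\eqref{ZA2}$ are precisely conditions $\eqref{PPVarkt}$, $\eqref{PPisoA2fibr}$, $\eqref{PPisoA2}$ of Corollary~\ref{Cor:ZinA3}, so they come for free. For $\eqref{m0}\Leftrightarrow\eqref{Pvarkt}$ I would read off the fibre of $\pi\colon Z_P\to\A^1_\k$ over $0$: by Corollary~\ref{Cor:ZinA3}, $P$ is a variable of $\k[t][x,y]$ if and only if every closed fibre of $\pi$ is isomorphic to $\A^1$ (the generic fibre being $\A^1_{\k(t)}$ already, since $P$ is a variable of $\k(t)[x,y]$). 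As $H_P\cong\A^2_\k$, Proposition~\ref{Prop:LiftPAssertions} guarantees that the fibres over $\A^1_\k\setminus\{0\}$ are $\A^1$, so only the fibre $\{P(0,x,y)=0\}$ matters. Writing $P(0,x,y)=\mu x^m(x-\lambda)$ (the other case being symmetric), this fibre is isomorphic to $\A^1_\k$ exactly when $m=0$, because for $m\ge 1$ it is the disjoint union of the lines $\{x=0\}$ and $\{x=\lambda\}$ (non-reduced along the former when $m\ge 2$). This yields $\eqref{m0}\Leftrightarrow\eqref{Pvarkt}$.

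For $\eqref{Pvarkt}\Rightarrow\eqref{AutofibStd}$ I would build an automorphism downstairs and lift it. After composing with suitable automorphisms of $\A^3_\k$ fixing $\pi$ and preserving $\Gamma$ (a torus action $(t,x,y)\mapsto(t,\xi x,\xi^{-1}y)$ and, if needed, the transposition $(t,x,y)\mapsto(t,y,x)$) and scaling $P$ by a constant, I may assume $P(0,x,y)=x-1$, so that the single point $Z_P\cap\Gamma$ provided by Lemma~\ref{Lemm:Polynomialp} is $(0,1,1)$. Since $P$ is a variable of $\k[t][x,y]$, I complete it to a coordinate system $\k[t][x,y]=\k[t][P,G]$; using the freedom $G\mapsto c\,G+h(P)$ with $c\in\k^\ast$ and $h\in\k[t][T]$, I can normalise $G$ so that $G(0,x,y)=y$. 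Then the $\k[t]$-automorphism $x\mapsto 1+P$, $y\mapsto G$ defines $g\in\Aut(\A^3_\k)$ with $\pi g=\pi$, which restricts to the identity on $\{t=0\}$ (hence preserves $\Gamma$) and satisfies $g(Z_P)=\{x=1\}=Z_{x-1}$. By Proposition~\ref{AutoSL2fixingX} this $g$ lifts to $\varphi\in\Aut(\SL_2)$ with $\varphi^\ast(t)=t$ and $\varphi(H_P)=H_{x-1}=\rho_1(\A^2_\k)$, which is $\eqref{AutofibStd}$. The implication $\eqref{AutofibStd}\Rightarrow\eqref{AutoStd}$ is trivial, as one simply forgets the condition $\varphi^\ast(t)=t$.

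The implication $\eqref{AutoStd}\Rightarrow\eqref{m0}$ is the main obstacle, and I expect it to require a perturbation idea rather than any descent of $\varphi$ (which need not respect the fibration, as $\Aut(\SL_2)$ is wild). Given $\varphi\in\Aut(\SL_2)$ with $\varphi(H_P)=\rho_1(\A^2_\k)=\{x=1\}$, comparing the irreducible elements cutting out $H_P$ and $\{x=1\}$ in the factorial ring $R=\k[\SL_2]$, whose units are $\k^\ast$ by Lemma~\ref{lem:R_is_UFD}, gives $\varphi^\ast(x)=1+cP$ for some $c\in\k^\ast$. The key observation is that the coordinate function $x\colon\SL_2\to\A^1_\k$ has all of its fibres over closed points $\neq 0$ isomorphic to $\A^2$ (one solves $y=(1+tu)/x$), while its fibre over $0$ is $\{tu=-1\}\times\A^1\cong\G_m\times\A^1$. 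Transporting this structure through $\varphi$ shows that $\{P=\beta\}=H_{P-\beta}$ is isomorphic to $\A^2$ for every $\beta\neq -1/c$. After base change to $\overline{\k}$ to secure enough values, I fix $\beta\in\overline{\k}\setminus\{0,-1/c\}$; since $P-\beta$ is still a variable of $\overline{\k}(t)[x,y]$, Proposition~\ref{Prop:LiftPAssertions} forces $P(0,x,y)-\beta$ to be of the form $\mu'x^{m'}(x-\lambda')$ or $\mu'y^{m'}(y-\lambda')$. But $P(0,x,y)-\beta=\mu x^m(x-\lambda)-\beta$ is a polynomial in $x$ of degree $m+1$ not vanishing at $x=0$, so for $m\ge 1$ it can be of neither form (each such form has degree $1$ or vanishes at $x=0$), a contradiction. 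Hence $m=0$, closing the cycle. The two delicate points to get right are the normalisation achieving $G(0,x,y)=y$ in the construction, and the observation that perturbing $P$ by a constant detects $m$ through the triviality of the neighbouring fibres of $x$.
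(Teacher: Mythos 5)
Your proposal is correct and follows essentially the same route as the paper: the chain $(\ref{m0})\Leftrightarrow(\ref{Pvarkt})\Leftrightarrow(\ref{ZA2fibr})\Leftrightarrow(\ref{ZA2})$ via Corollary~\ref{Cor:ZinA3}, the lift $(\ref{Pvarkt})\Rightarrow(\ref{AutofibStd})$ through Proposition~\ref{AutoSL2fixingX} after normalising $P(0,x,y)=x-1$ (your direct normalisation of the complementary coordinate $G$ replaces the paper's post-composition with an automorphism of the plane $t=0$, to the same effect), and $(\ref{AutoStd})\Rightarrow(\ref{m0})$ by perturbing $P$ by a general constant using $\varphi^*(x)=1+cP$ and the factoriality of $\k[\SL_2]$. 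The only cosmetic difference is in the last step, where the paper analyses the zero fibre of the $t$-projection of $H_{P-a}$ directly, while you reapply Proposition~\ref{Prop:LiftPAssertions} to $P-\beta$ and contradict the required normal form of $P(0,x,y)-\beta$ — the same mechanism, repackaged.
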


\begin{proof} 
As before, we use the morphisms 
\[
	\eta\colon \SL_2\to \A^3_\k \, , \quad
				\begin{pmatrix}
				  x & t \\
				  u & y
			     \end{pmatrix}\mapsto (t,x,y),\quad \pi\colon \A^3_\k\to \A^1_\k\, , \quad (t,x,y)\mapsto t \, .
\]
Proposition~\ref{Prop:LiftPAssertions} says that $H_P\subset \SL_2$ is the image of a fibred embedding $\A^2_\k\hookrightarrow \SL_2$, which corresponds to say that $\pi\eta\colon H_P\to \A^1_\k$ is a trivial $\A^1$-bundle. Since
	$\eta \colon \SL_2 \to \A^3_{\k}$ is an isomorphism over the open subset
	$\{ t \neq 0 \}$, we obtain that $\pi \colon Z_P\to \A^1_\k$ is a trivial 
	$\A^1$-bundle over $\A^1_\k\setminus \{0\}$.

We first prove $(\ref{m0})\Leftrightarrow (\ref{Pvarkt})\Leftrightarrow (\ref{ZA2fibr})\Leftrightarrow (\ref{ZA2})$, using Corollary~\ref{Cor:ZinA3}. We observe that $(\ref{Pvarkt})$, $(\ref{ZA2fibr})$ and $(\ref{ZA2})$ correspond respectively to the equivalent assertions $(\ref{PPVarkt})$, $(\ref{PPisoA2fibr})$ and $(\ref{PPisoA2})$ of Corollary~\ref{Cor:ZinA3}. Moreover, the condition $m=0$ (which is $(\ref{m0})$) corresponds to say that the $0$-fibre of $\pi \colon Z_P\to \A^1_\k$ is isomorphic to $\A^1_\k$. Since $\pi \colon Z_P\to \A^1_\k$ is a trivial 
$\A^1$-bundle over $\A^1_\k\setminus \{0\}$, assertion~$(\ref{m0})$ corresponds to assertion $(\ref{PPeachfibA1})$ of Corollary~\ref{Cor:ZinA3}. Thus Corollary~\ref{Cor:ZinA3} yields 
\[
	(\ref{m0})\Leftrightarrow (\ref{Pvarkt})\Leftrightarrow (\ref{ZA2fibr})
	\Leftrightarrow (\ref{ZA2}).
\]
It remains to show that these are also equivalent to $(\ref{AutoStd})$ and $(\ref{AutofibStd})$.

$(\ref{Pvarkt})\Rightarrow (\ref{AutofibStd})$: Applying an automorphism of the form
\[
			     \begin{pmatrix}
				  x & t \\
				  u & y
			     \end{pmatrix}\mapsto\begin{pmatrix}
				  \mu^{-1}x & t \\
				  u & \mu y
			     \end{pmatrix}
			     \quad \textrm{or} \quad 
			     \begin{pmatrix}
				  x & t \\
				  u & y
			     \end{pmatrix}\mapsto
			     \begin{pmatrix}
				  \mu^{-1}y & t \\
				  u & \mu x
			     \end{pmatrix}
\]
for some $\mu\in \k^*$, we can assume that $P(0,x,y)=x-1$. Since $P$ is a variable of the $\k[t]$-algebra $\k[t][x,y]$, there exists $f\in \Aut_{\k[t]}(\k[t,x,y])$ such that $f(x-1)=P$. The element $\psi\in \Aut(\A^3_\k)$ satisfying $\psi^*=f$ is then such that $\pi\psi=\pi$ and sends $Z_P$ onto the hypersurface of $\A^3_\k$ given by $x=1$. The restriction of $\psi$ to the hypersurface given by $t=0$ is an automorphism of the form $(0,x,y)\mapsto (0,\nu(x,y),\rho(x,y))$ which preserves the curve given by $x-1=0$. Replacing $\psi$ with its composition with
the inverse of $(t,x,y)\mapsto (t,\nu(x,y),\rho(x,y))$, we can assume that the restriction of $\psi$ to the hypersurface $t=0$ is the identity, so $\psi(\Gamma)=\Gamma$, where $\Gamma$ is the curve given by $t=xy-1=0$.  Proposition~\ref{AutoSL2fixingX} implies then that $\psi$ lifts to an automorphism $\varphi$ of $\SL_2$ sending $H_P$ onto $\rho_1(\A^2_{\k})$. 
We moreover have $\varphi^*(t)=t$, since $\psi^*(t)=t$.

$(\ref{AutofibStd})\Rightarrow (\ref{AutoStd})$ being clear, it 
remains to show $(\ref{AutoStd})\Rightarrow(\ref{m0})$. 
For this implication, one can assume that $\k$ is algebraically closed. 
Assertion~$(\ref{AutoStd})$ yields an automorphism $\varphi \in \Aut(\SL_2)$
such that $\varphi(H_P) = \rho_1(\A^2_{\k})$. Hence the automorphism 
$\varphi^*\in\Aut_\k(R)$, where $R=\Spec(\k[t,u,x,y] / (xy-tu-1))$, sends the 
ideal $(x-1)\subset R$ onto the ideal $(P)\subset R$. It follows from 
Lemma~\ref{lem:R_is_UFD} that $x-1$ is sent onto $\mu P$, for some 
$\mu\in \k^*$. In particular, for a general $a\in \k$, the variety 
$H_{P-a} \subset \SL_2$ given by $P-a=0$ is isomorphic to $\A^2_\k$. 
It remains to show that this implies that $m=0$.
	
	Since $P$ is a variable of the $\k(t)$-algebra $\k(t)[x, y]$, so is $P-a$. 
	There exists then an open dense subset $U \subset \A^1_{\k}$ 
	such that $U \times \A^2_{\k} \to U \times
	\A^1_{\k}$, $(t, x, y) \mapsto (t, P(t, x, y) - a)$
	is a trivial $\A^1$-bundle. This implies that 
	$q_a \colon H_{P-a} \to \A^1_{\k}$ $(t, u, x, y) \mapsto t$
	is a trivial $\A^1$-bundle 
	over $U$. By Lemma~\ref{Lem:PolynomTwoVariables}, 
	$q_a$ is a trivial $\A^1$-bundle (since $H_{P-a} \simeq \A^2_\k$), 
	so the fibre $(q_a)^{-1}(\{0\})$ needs to be isomorphic to an affine line. 
	Since $(q_a)^{-1}(\{0\})$
	is given by the equations $xy-1=P(0, x,y)-a =t =0$
	in the affine $4$-space $\A^4_{\k} = \Spec(\k[t, u, x, y])$ and since
	$P(0,x,y)-a$ is equal to $\mu x^m(x-\lambda)-a$ or 
	$\mu y^m(y-\lambda)-a$ and $a\in \k$ is general 
	($\k$ is algebraically closed),
	this implies that $m=0$ and yields $(\ref{m0})$ as desired.
\end{proof}

\begin{remark}
\label{Rem:M0}
Lemma~\ref{Lem:M0} shows in particular that if $H_P,H_Q\subset \SL_2$ are two hypersurfaces given by two polynomials $P,Q\in \k[t,x,y]$ as in Theorem~\ref{thm:Fibered_Embeddings} (or as in the previous results), and if one of the two integers $m,m'\in \N$ associated to $P,Q$ is equal to zero, then $H_P,H_Q$ are equivalent if and only if $m=m'=0$.
\end{remark}

\begin{proposition} \label{prop:SL2eqthenA3eq}
Let $\k$ be any field, let $P,Q\in \k[t,x,y]$ be polynomials that are variables of the $\k(t)$-algebra $\k(t)[x,y]$, and let $H_P,H_Q\subset \SL_2=\Spec(\k[t,u,x,y] / (xy-tu-1))$ and $Z_P,Z_Q\subset \A^3_\k=\Spec(\k[t,u,x])$ be the hypersurfaces given by $P=0$ and $Q=0$ respectively.

Suppose that $H_P$ is isomorphic to $\A^2_\k$ but that $Z_P$ is not isomorphic to $\A^2_\k$, and that there exists $\varphi\in \Aut(\SL_2)$ that sends $H_P$ onto $H_Q$. Then, the following hold:
\begin{enumerate}[label=$(\arabic*)$, ref=\arabic*]
\item\label{varphistartt} There exists $\mu\in \k^*$ such that $\varphi^*(t)=\mu t$.
\item\label{PsiIsAlift}
The birational map $\psi=\eta\varphi\eta^{-1}$ is an automorphism of $\A^3_\k$ which sends $Z_P$ onto $Z_Q$, where $\eta\colon \SL_2\to \A^3_\k$ is as before given by $(t,u, x, y)\mapsto (t,x,y)$.
\item\label{SameM}
There exists $m\ge 1$ such that $P(0,x,y)$ and $Q(0,x,y)$ are of the form  $\mu x^m(x-\lambda)$ or 
			     $\mu y^m(y-\lambda)$  
			     for some $\mu,\lambda\in \k^*$ 
			     $($the integer $m$ is the same for $P,Q$ but 
			     $\mu,\lambda$ and the choice between $x$ and $y$ 
			     depend on $P,Q)$.
\end{enumerate}
\end{proposition}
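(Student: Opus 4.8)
The plan is to reduce all three assertions to \eqref{varphistartt}, and to prove \eqref{varphistartt} by showing that $\varphi$ preserves the divisor $E=\{t=0\}\subset\SL_2$. First I would record two consequences of Lemma~\ref{Lem:M0}. Since $H_P\simeq\A^2_\k$ while $Z_P\not\simeq\A^2_\k$, the equivalence \eqref{m0}~$\Leftrightarrow$~\eqref{ZA2} forces the integer $m$ attached to $P$ to satisfy $m\geq 1$. Moreover $\varphi(H_P)=H_Q$ gives $H_Q\simeq\A^2_\k$; and if $Z_Q$ were isomorphic to $\A^2_\k$, then by Lemma~\ref{Lem:M0}\eqref{AutoStd} the embedding $H_Q$, hence also $H_P$, would be equivalent to the standard one $\rho_1(\A^2_\k)$, whence $m=0$ by the same lemma, a contradiction. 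So $Z_Q\not\simeq\A^2_\k$ and the integer $m'$ attached to $Q$ also satisfies $m'\geq 1$.

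Next I would reformulate \eqref{varphistartt}. By Lemma~\ref{lem:R_is_UFD} the ring $R=\k[t,u,x,y]/(xy-tu-1)$ is a UFD with $R^\ast=\k^\ast$, and $t$ is a prime element whose zero locus is the irreducible surface $E=\{t=0\}=\{xy=1\}$. Consequently $\varphi^\ast(t)$ is again prime, and $\varphi^\ast(t)=\mu t$ for some $\mu\in\k^\ast$ if and only if $\varphi(E)=E$. Geometrically $E$ is the exceptional divisor of $\eta$: by Lemma~\ref{Lemma:BlGammaA3}, $\eta$ is the restriction of the blow-up of $\Gamma=\{t=xy-1=0\}$, it is an isomorphism over $\{t\neq 0\}$, and it contracts the lines $\{t=0,\,x=c,\,y=c^{-1}\}\simeq\A^1_u$ of $E$ onto the points of $\Gamma$. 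A direct computation shows that $H_P\cap E$ is exactly one such $u$-line, mapped by $\eta$ to the single point $q=(0,\lambda,\lambda^{-1})\in\Gamma$.

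To prove $\varphi(E)=E$ I would exploit the pencil generated by $P$. Because $\varphi(H_P)=H_Q$ and $R^\ast=\k^\ast$, one has $\varphi^\ast(Q)=\mu_0 P$ for some $\mu_0\in\k^\ast$, so $\varphi$ maps the pencil $\{H_{P=a}\}_{a\in\k}$ to the pencil $\{H_{Q=b}\}_{b\in\k}$. I would then attach to a prime divisor $D$ the discrete invariant given by the number of geometric connected components of $D\cap H_{P=a}$ for general $a$. Computing this on $E$ (substituting $y=x^{-1}$, with $u$ free, into $P(0,x,y)=\mu x^m(x-\lambda)$, up to swapping $x$ and $y$) one finds the equation $\mu x^m(x-\lambda)=a$ has $m+1$ simple nonzero roots for general $a\neq 0$, giving $m+1$ disjoint lines $\A^1_u$, whereas for $a=0$ only $x=\lambda$ survives and the intersection is connected; thus the count equals $m+1\geq 2$ generically and drops to $1$ on the special member $H_P$. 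This is the one place where the hypothesis $Z_P\not\simeq\A^2_\k$ (that is, $m\geq 1$) is essential: for $m=0$ the invariant would be constantly $1$ and would not single out $E$. The same computation applies to $Q$ since $m'\geq 1$. Now $E$ exhibits this degeneration pattern relative to the $Q$-pencil, and so does $\varphi(E)$, since $\varphi(E)\cap H_{Q=b}=\varphi\bigl(E\cap H_{P=b/\mu_0}\bigr)$ has the same component count as $E\cap H_{P=b/\mu_0}$. The hard part will be the uniqueness: one must show that $E$ is the only prime divisor realizing this pattern, so that $\varphi(E)=E$. The natural mechanism is that, $P$ being a variable of the $\k(t)$-algebra $\k(t)[x,y]$ and $\eta$ being an isomorphism over $\{t\neq 0\}$, each $H_{P=a}\to\A^1_\k$ is a trivial $\A^1$-bundle over $\A^1_\k\setminus\{0\}$, so all the extra components of $D\cap H_{P=a}$ must lie over $t=0$; pushing this through should force any such $D$ into $\{t=0\}$, hence $D=E$. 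This yields \eqref{varphistartt}.

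Granting \eqref{varphistartt}, the rest is formal. For \eqref{PsiIsAlift} I would factor $\varphi=\tilde\delta_\mu\circ g$, where $\tilde\delta_\mu\colon(t,u,x,y)\mapsto(\mu t,\mu^{-1}u,x,y)$ lies in $\Aut(\SL_2)$ and $\tilde\delta_\mu^\ast(t)=\mu t$, so that $g=\tilde\delta_\mu^{-1}\varphi$ satisfies $g^\ast(t)=t$. By Proposition~\ref{AutoSL2fixingX} the map $g$ lies in $\{g\mid g(X)=X\}$ and descends to $\eta g\eta^{-1}\in\Aut(\A^3_\k)$ preserving $\pi$ and $\Gamma$, while $\eta\tilde\delta_\mu\eta^{-1}$ is the automorphism $(t,x,y)\mapsto(\mu t,x,y)$ of $\A^3_\k$. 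Hence $\psi=\eta\varphi\eta^{-1}=(\eta\tilde\delta_\mu\eta^{-1})(\eta g\eta^{-1})\in\Aut(\A^3_\k)$, and $\psi(Z_P)=\overline{\psi(\eta(H_P))}=\overline{\eta(\varphi(H_P))}=Z_Q$. Finally, for \eqref{SameM}, the automorphism $\psi$ satisfies $\psi^\ast(t)=\mu t$ and $\psi(\Gamma)=\Gamma$, so it restricts to an automorphism $\psi_0$ of the plane $\{t=0\}=\Spec(\k[x,y])$ preserving the conic $\{xy=1\}$ and sending $\{P(0,x,y)=0\}$ onto $\{Q(0,x,y)=0\}$. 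By \cite[Theorem~2~$(iii)$]{BlaSta} such a $\psi_0$ has the form $(x,y)\mapsto(\xi x,\xi^{-1}y)$ or $(x,y)\mapsto(\xi y,\xi^{-1}x)$; either way it preserves the multiplicity of the component through the origin, so $Q(0,x,y)$ is of the asserted form with the same $m$, giving \eqref{SameM}.
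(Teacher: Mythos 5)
Your overall architecture is the right one and matches the paper's: the preliminary reductions via Lemma~\ref{Lem:M0} (forcing $m\ge 1$ and $m'\ge 1$), the reformulation of Assertion~(\ref{varphistartt}) as $\varphi(E)=E$ for $E=\{t=0\}$ using that $R$ is a UFD with $R^\ast=\k^\ast$ (Lemma~\ref{lem:R_is_UFD}), and the deduction of Assertions~(\ref{PsiIsAlift}) and~(\ref{SameM}) from~(\ref{varphistartt}) by factoring $\varphi$ through an automorphism fixing $t$ and applying Proposition~\ref{AutoSL2fixingX} are all correct and essentially identical to what the paper does.

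However, there is a genuine gap at the heart of the argument, namely in the proof of Assertion~(\ref{varphistartt}) itself, and you flag it yourself (``the hard part will be the uniqueness''). Your component-counting invariant does show that $\varphi(E)$, like $E$, meets the general member $H_{Q-b}$ of the pencil in a disconnected curve while meeting the special member $H_Q$ in a connected one; but to conclude $\varphi(E)=E$ you must show that $E$ is the \emph{only} prime divisor of $\SL_2$ with this behaviour, and the mechanism you sketch does not deliver this. The fact that $H_{Q-b}\to\A^1_\k$ is a trivial $\A^1$-bundle over $\A^1_\k\setminus\{0\}$ does not force the extra components of $D\cap H_{Q-b}$ to lie over $t=0$: a prime divisor $D\not\subset\{t=0\}$ can meet a general $H_{Q-b}$ in a reducible multisection of the $t$-fibration, or in a curve with some components inside the degenerate fibre and some outside, and nothing in your sketch excludes such $D$ from exhibiting the same degeneration pattern. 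The paper's proof replaces this step by an argument of a different nature: after extending scalars, it compactifies the general members $H_{P-a}$, $H_{Q-a}$ to smooth projective surfaces whose boundary is an SNC divisor whose dual graph is \emph{not} a chain, and invokes Bertin's theorem \cite[Theor\`eme~1.8]{Bertin} to conclude that the $t$-projection is the \emph{unique} $\A^1$-fibration on these surfaces up to automorphism of the base. This yields $\varphi^*(t)=\mu_a t+q_a\cdot(P-a)$ for general $a$, hence $\varphi^*(t)=\mu t$ by letting $a$ vary. Some substitute for this uniqueness-of-fibration input (or a genuinely new uniqueness argument for your divisorial invariant) is needed before your proof of~(\ref{varphistartt}) is complete; the remaining two assertions then follow as you describe.
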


\begin{proof}
Since $H_P$ is isomorphic to $\A^2_\k$, the same holds for $H_Q$. The hypersurfaces $H_P,H_Q\subset \SL_2$ are thus image of fibred embeddings $\A^2_\k\hookrightarrow \SL_2$ and there are thus integers $m,m'\ge 0$ and $\lambda,\lambda',\mu',\mu\in \k^*$ such that $P(0,x,y)\in \{\mu x^m(x-\lambda),\mu y^m(y-\lambda)\}$ and $Q(0,x,y)\in \{\mu' x^{m'}(x-\lambda'),\mu' y^{m'}(y-\lambda')\}$ (Proposition~\ref{Prop:LiftPAssertions}). Moreover, the fact that $Z_P$ is not isomorphic to $\A^2_\k$ is equivalent to $m>0$ and to the fact that $H_P$ is not equivalent to the image  $\rho_1(\A^2)$ of the standard embedding (Lemma~\ref{Lem:M0}). As $H_P$ and $H_Q$ are equivalent, the same hold for $H_Q$, so $m'>0$.

The main part of the proof consists in proving (\ref{varphistartt}). To do this, one 
can extend the scalars and assume $\k$ to be algebraically closed. 
We moreover have $\varphi^*(Q)=\xi P$ for some $\xi\in \k^*$ 
(follows from Lemma~\ref{lem:R_is_UFD}). Replacing $P$ with $\xi P$, 
we can assume that $\varphi^*(Q)=P$. For each $a\in \k^*$, the element 
$\varphi$ then sends $H_{P-a}$ onto $H_{Q-a}$, where 
$H_{P-a},H_{Q-a}\subset \SL_2$ are given by the polynomials 
$P-a,Q-a\in \k[t,x,y]$. If $a$ is chosen general, then $H_{P-a},H_{Q-a}$ 
are smooth hypersurfaces of $\SL_2$ (since this is true for $a=0$), and the 
same holds for the hypersurfaces $Z_{P-a},Z_{Q-a}\subset \A^3_\k$ given  by 
$P-a$ and $Q-a$ respectively. Since $P,Q$ are variables of the $\k(t)$-algebra 
$\k(t)[x,y]$ and because the $t$-projections $Z_P\to \A^1$ and 
$Z_Q\to \A^1$ are trivial $\A^1$-bundles over $\A_{\k}^1 \setminus \{ 0 \}$ 
(Lemma~\ref{Lemm:Polynomialp}\eqref{TrivialfibrationOverOpen}), 
the polynomials $P,Q$ are also
variables of the $\k[t,\frac{1}{t}]$-algebra $\k[t,\frac{1}{t}][x,y]$ 
(follows from Lemma~\ref{Lemm:ZinA3U} with $U=\A^1\setminus \{0\}$). 
Hence, the same holds for $P-a$ and $Q-a$. The morphisms 
$H_{P-a},H_{Q-a},Z_{P-a},Z_{Q-a}\to \A^1_\k$ given by the projection on 
$t$ are therefore trivial 
$\A^1$-bundles over $\A^1_\k\setminus \{0\}$. We can thus see these varieties 
as open subsets of smooth projective surfaces $\overline{H_{P-a}},\overline{H_{Q-a}},\overline{Z_{P-a}},\overline{Z_{Q-a}}$ obtained by blowing-up some Hirzebruch surfaces, so that the projection on $t$ is the restriction of the morphism to $\p^1_\k$ given by a $\p^1$-bundle of the Hirzebruch surface and having only one singular fibre. We can moreover assume that the boundary is a union of smooth rational curves of self-intersection $0$ or $\le -2$ (in particular the projectivisation is minimal). Indeed, if a component of the singular fibre has self-intersection $-1$ and is in the boundary, we can contract it, and if the section has self-intersection $-1$, then we blow-up a general point of the smooth fibre contained in the boundary and then contract the strict transform of this fibre to obtain a section of self-intersection $0$. The zero fibre of $Z_{P-a} \to \A^1_\k$ is given by $t=\mu x^m(x-\lambda)-a=0$ or $t=\mu y^m(y-\lambda)-a=0$ and is thus a disjoint union 
$C \simeq \coprod_{i=1}^{m+1} \A^1_{\k}$ 
of $m+1$ affine curves isomorphic to $\A^1_{\k}$. Similarly the zero fibre of $Z_{Q-a} \to \A^1_\k$ is a disjoint union 
$C' \simeq \coprod_{i=1}^{m'+1} \A^1_{\k}$ 
of $m'+1$ affine curves isomorphic to $\A^1_{\k}$. The closure of $C$ is  contained in the singular fibre $F_0$ of $\overline{Z_{P-a}}\to \p^1_\k$, which is a tree of smooth rational curves of self-intersection $\le -1$, being a SNC divisor. Hence, the closure of each component of $C$ is a smooth rational curve of self-intersection $\le -1$, which intersects the boundary into a component lying in $F_0$. A similar description holds for $C'$.

The curves $C$, $C'$ meet transversally the conic $\Gamma$ given by $xy=1$ (because of the form of $P(0,x,y)-a$ and $Q(0,x,y)-a$). 
The surfaces $\overline{H_{P-a}}, \overline{H_{Q-a}}$ 
are then obtained by blowing-up some 
points in each of the components of $C$, $C'$
and removing these components, so we can choose the a minimal 
projectivisations of $H_{P-a},H_{Q-a}$ to be blowing-ups of the above points in $\overline{Z_{P-a}},\overline{Z_{Q-a}}$ and get a dual graph of 
the boundary of these surfaces which is not a chain (or which is not ``linear'' or not a ``zigzag''). This implies that the $\A^1$-fibration given by the $t$-projection is unique up to automorphisms of the target (\cite[Theor\`eme~1.8]{Bertin}). As the zero fibre of 
$H_{P-a}, H_{Q-a} \to \A^1_{\k}$ is the unique degenerate fibre, 
there exist $\mu_a\in \k^*$ and $q_a\in \k[t,u,x,y]$ such that $\varphi^*(t)=\mu_a t+q_a\cdot (P-a)$. Since this holds for a general $a$, we get $\varphi^*(t)=\mu t$ for some $\mu\in \k^*$. Indeed, replacing $t$ with $0$ in $\varphi^*(t)$ yields an element of $\k[u,x,y]/(xy-1)$ which is divisible by $P-a$ for infinitely many $a$. 
This element is thus equal to zero.

We now show how Assertion~(\ref{varphistartt}) implies the two others. We write $\varphi=\varphi_1\varphi_2$ where $(\varphi_1)^*(t)=t$ and $\varphi_2$ is given by \[
			     \begin{pmatrix}
				  x & t \\
				  u & y
			     \end{pmatrix}\mapsto\begin{pmatrix}
				  x & \mu t \\
				  \mu^{-1}u &  y
			     \end{pmatrix}.
\] 
The fact that $(\varphi_1)^*(t)=t$ implies that $\psi_1=\eta\varphi_1\eta^{-1}$ is an automorphism of $\A^3_\k$ (Proposition~\ref{AutoSL2fixingX}). Since $\psi_2=\eta\varphi_2\eta^{-1}$ is a diagonal automorphism of $\A^3_\k$, the element $\psi=\psi_1\psi_2=\eta\varphi\eta^{-1}$ is an automorphism of $\A^3_\k$. As $\varphi$ sends $H_P$ onto $H_Q$, the automorphism $\psi$ sends $Z_P$ onto $Z_Q$, which yields~(\ref{PsiIsAlift}). As $\psi^*(t)=\mu t$, the hyperplane $W\subset \A^3_\k$ given by $t=0$ is invariant, this implies that $m=m'$ and thus yields (\ref{SameM}).
\end{proof}

Lemma~\ref{Lem:M0} and Proposition~\ref{prop:SL2eqthenA3eq} yield then the following result, which yields in particular Assertion~\eqref{ThmFibered2} of Theorem~\ref{thm:Fibered_Embeddings}:

\begin{corollary}\label{Coro:HPHQequiv}
If $P, Q \in \k[t,x,y]$ are polynomials which are variables of the 
	$\k(t)$-algebra $\k(t)[x,y]$ and if the corresponding
	hypersurfaces $H_P,H_Q\subset\SL_2=\Spec(\k[t,u,x,y] / (xy-tu-1))$ 
	are equivalent 
	and isomorphic to $\A^2_{\k}$, the following hold:
	\begin{enumerate}[label=$(\arabic*)$, ref=\arabic*]
	\item\label{HPQfibred}
	$H_P,H_Q$ are the image of fibred embeddings 
	$\A^2_\k\hookrightarrow \SL_2$.
	\item\label{HPQeqfibred}
	There exists $\varphi\in \Aut(\SL_2)$ such that $\varphi(H_P)=H_Q$ 
	and $\varphi^*(t)=\mu t$ for some $\mu\in \k^*$. 
	In particular, the element $\psi=\eta\varphi\eta^{-1}\in \Aut(\A^3_\k)$ satisfies 
	$\psi^*(t)=\mu t$, $\psi(Z_P)=Z_Q$ and $\psi(\Gamma) = \Gamma$, where 
	$\eta\colon \SL_2\to \A^3_\k$ is the morphism 
	$(t, u, x, y) \mapsto (t,x,y)$,
	$Z_P,Z_Q\subset \A^3_\k$ are the two hypersurfaces given by $P=0$, $Q=0$
	and $\Gamma\subset \A^3_\k$ is the conic given 
	by $t=xy-1=0$. 
	\end{enumerate} 
\end{corollary}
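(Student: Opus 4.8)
The plan is to prove the two assertions separately, using for the second a dichotomy according to whether $Z_P$ is isomorphic to $\A^2_\k$. Assertion~\eqref{HPQfibred} is immediate: since $P$ and $Q$ are variables of the $\k(t)$-algebra $\k(t)[x,y]$ and the hypersurfaces $H_P,H_Q\subset\SL_2$ are isomorphic to $\A^2_\k$, the equivalence of conditions $(\mathrm{a})$ and $(\mathrm{b})$ in Proposition~\ref{Prop:LiftPAssertions} shows that each of $H_P,H_Q$ is the image of a fibred embedding $\A^2_\k\hookrightarrow\SL_2$.

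For Assertion~\eqref{HPQeqfibred} I would first treat the case where $Z_P\simeq\A^2_\k$. By Lemma~\ref{Lem:M0} this corresponds to $m=0$ and provides $\varphi_P\in\Aut(\SL_2)$ with $\varphi_P(H_P)=\rho_1(\A^2_\k)$ and $\varphi_P^*(t)=t$ (condition~\eqref{AutofibStd} there). As $H_P$ and $H_Q$ are equivalent, $H_Q$ is likewise equivalent to $\rho_1(\A^2_\k)$, so Lemma~\ref{Lem:M0} applied to $Q$ gives $m'=0$ and an analogous $\varphi_Q\in\Aut(\SL_2)$ with $\varphi_Q(H_Q)=\rho_1(\A^2_\k)$ and $\varphi_Q^*(t)=t$. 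Then $\varphi=\varphi_Q^{-1}\varphi_P$ satisfies $\varphi(H_P)=H_Q$ and $\varphi^*(t)=t$, so $\mu=1$ works. In the remaining case $Z_P\not\simeq\A^2_\k$, the hypotheses of Proposition~\ref{prop:SL2eqthenA3eq} are exactly met, and Assertions~\eqref{varphistartt} and~\eqref{PsiIsAlift} of that proposition directly yield an automorphism $\varphi\in\Aut(\SL_2)$ with $\varphi(H_P)=H_Q$, $\varphi^*(t)=\mu t$ for some $\mu\in\k^*$, and $\psi=\eta\varphi\eta^{-1}\in\Aut(\A^3_\k)$ sending $Z_P$ onto $Z_Q$.

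Finally I would deduce the ``In particular'' statement uniformly from $\varphi^*(t)=\mu t$, which both cases have established. Decompose $\varphi=\varphi_1\varphi_2$, where $\varphi_2\colon\begin{pmatrix} x & t \\ u & y\end{pmatrix}\mapsto\begin{pmatrix} x & \mu t \\ \mu^{-1}u & y\end{pmatrix}$ and $\varphi_1=\varphi\varphi_2^{-1}$, so that $\varphi_1^*(t)=t$. By Proposition~\ref{AutoSL2fixingX} the map $\psi_1=\eta\varphi_1\eta^{-1}$ is an automorphism of $\A^3_\k$ with $\psi_1^*(t)=t$ and $\psi_1(\Gamma)=\Gamma$, while $\psi_2=\eta\varphi_2\eta^{-1}$ is the diagonal automorphism $(t,x,y)\mapsto(\mu t,x,y)$, which satisfies $\psi_2^*(t)=\mu t$ and $\psi_2(\Gamma)=\Gamma$. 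Hence $\psi=\psi_1\psi_2\in\Aut(\A^3_\k)$ has $\psi^*(t)=\mu t$ and $\psi(\Gamma)=\Gamma$; and since $\eta$ is an isomorphism over $\{t\neq0\}$ and $\varphi(H_P)=H_Q$, we obtain $\psi(Z_P)=Z_Q$. The real content lies entirely in Lemma~\ref{Lem:M0} and Proposition~\ref{prop:SL2eqthenA3eq}; the only point requiring care here is that turning $\varphi$ into a genuine automorphism $\psi$ of $\A^3_\k$ needs $\varphi^*(t)$ to be a scalar multiple of $t$, which is precisely what the case analysis secures.
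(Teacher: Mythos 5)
Your proof is correct and takes essentially the same route as the paper's: Assertion~(\ref{HPQfibred}) via Proposition~\ref{Prop:LiftPAssertions}, and Assertion~(\ref{HPQeqfibred}) via the same combination of Lemma~\ref{Lem:M0}, Proposition~\ref{prop:SL2eqthenA3eq} and the lifting isomorphism of Proposition~\ref{AutoSL2fixingX}. The only difference is organisational: you split cases according to whether $Z_P\simeq\A^2_\k$ and handle each directly, whereas the paper splits according to whether the given automorphism already satisfies $\varphi^*(t)=\mu t$ and, in the remaining case, deduces $Z_P\simeq\A^2_\k$ by contraposition of Proposition~\ref{prop:SL2eqthenA3eq} before invoking Lemma~\ref{Lem:M0} — the ingredients and logic are identical.
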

\begin{proof}
Assertion~\eqref{HPQfibred} follows from Proposition~\ref{Prop:LiftPAssertions}. It remains then to show \eqref{HPQeqfibred}. We denote by $\varphi_0\in \Aut(\SL_2)$ an element such that $\varphi_0(H_P)=H_Q$. 

$(i)$ If $\varphi_0^*(t)=\mu t$ for some $\mu\in \k^*$, we choose $\varphi=\varphi_0$ and denote by $\theta\in \Aut(\SL_2)$ the element 
\[			     
			     \begin{pmatrix}
				  x & t \\
				  u & y
			     \end{pmatrix}\mapsto \begin{pmatrix}
				  x & \mu^{-1} t \\
				  \mu u & y
			     \end{pmatrix}
\] 
to obtain $(\varphi_0\theta)^*(t) = t$. Proposition~\ref{AutoSL2fixingX} shows that 
$\hat\psi=\eta(\varphi_0\theta)\eta^{-1}\in \Aut(\A^3_\k)$ and 
$\hat\psi(\Gamma)=\Gamma$. Since $\tilde\theta=\eta\theta\eta^{-1}$ is the
automorphism of $\A^3_\k$ given by $(t,x,y)\mapsto (\mu^{-1} t,x, y)$, we have 
$\psi=\eta\varphi_0\eta^{-1}\in \Aut(\A^3_\k)$ and $\psi(\Gamma)=\Gamma$.  The 
fact that $\varphi_0^*(t)=\mu t$ and $\varphi_0(H_P)=H_Q$ yields then 
$\psi^*(t)=\mu t$ and $\psi(Z_P)=Z_Q$.

$(ii)$ If $\varphi_0^*(t)\not\in \{\mu t\mid \mu \in\k^*\}$, then Proposition~\ref{prop:SL2eqthenA3eq}(\ref{varphistartt}) does not hold, so $Z_P$ is isomorphic 
to $\A^2_\k$. Applying the same argument to $\varphi_0^{-1}$ shows that $Z_Q$ is isomorphic to $\A^2_\k$. Lemma~$\ref{Lem:M0}(\eqref{ZA2}\Rightarrow \eqref{AutofibStd})$ then shows that there exist $\varphi_1,\varphi_2\in \Aut(\SL_2)$ 
such that $\varphi_1(H_P)=\varphi_2(H_Q)=\rho_1(\A^2_{\k})$ 
and $(\varphi_1)^*(t)=(\varphi_2)^*(t)=t$. We then choose 
$\varphi=(\varphi_2)^{-1}\varphi_1$ and apply case $(i)$.
\end{proof}

\subsection{Examples of non-equivalent embeddings}

\begin{lemma}
\label{Lemm:InfiniteFamily}
To each polynomial $r\in \k[t]$, we associate the polynomial
\[P_r=ty-(x-t)(x-1-t^2r(t))\in \k[t,x,y]\]
and denote by let $H_{P_r}\subset \SL_2=\Spec(\k[t,u,x,y] / (xy-tu-1))$ and $Z_{P_r}\subset \A^3_\k=\Spec(\k[t, x, y])$ the hypersurfaces given by $P_r=0$. Then,
\begin{enumerate}[label=$(\arabic*)$, ref=\arabic*]
\item\label{HPrfibredemb}
For each $r\in \k[t]$, the surface $H_{P_r}$ is the image of a fibred embedding $\A^2_\k\hookrightarrow \SL_2$.
\item\label{EquivRS}
For each $r,s\in \k[t]$, the following are equivalent
\begin{enumerate}[label=$(\roman*)$, ref=\roman*]
\item\label{EquivRSHeq}
There exists $\varphi\in \Aut(\SL_2)$ such that $\varphi(H_{P_r})=H_{P_s}$.
\item \label{EquivRSZeq}
There exists $\varphi\in \Aut(\A^3_\k)$ such that $\varphi(Z_{P_r})=Z_{P_s}$.
\item\label{EquivRSZiso}
The surfaces $Z_{P_r}$ and $Z_{P_s}$ are isomorphic.
\item\label{EquivRSequal}
$r=s$.
\end{enumerate}
\end{enumerate}
\end{lemma}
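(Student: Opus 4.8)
The plan is to prove Assertion~(1) first and then run the cycle of implications $(iv)\Rightarrow(i)\Rightarrow(ii)\Rightarrow(iii)\Rightarrow(iv)$ for Assertion~(2). For Assertion~(1) I would simply recognise $P_r$ as an instance of Example~\ref{LargeFamilySimpleExamples}: expanding $(x-t)(x-1-t^2r(t))$ and collecting terms gives
\[ P_r = t\,y + (-1)\,x(x-1) + t\bigl((x-1)+t\,r(t)(x-t)\bigr), \]
so with $n=1$, $\mu=-1$, $m=1$ and $q(t,x)=(x-1)+t\,r(t)(x-t)\in\k[t,x]$ the hypersurface $H_{P_r}$ is the image of a fibred embedding. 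Equivalently, $P_r$ is linear in $y$ with leading coefficient $t$, hence a variable of $\k(t)[x,y]$ with $P_r(0,x,y)=-x(x-1)$ of the required form, so Proposition~\ref{Prop:LiftPAssertions} applies. I would record that here $m=1>0$, whence by Lemma~\ref{Lem:M0} the surface $Z_{P_r}$ is \emph{not} isomorphic to $\A^2_\k$; this is what allows the sharp results of Proposition~\ref{prop:SL2eqthenA3eq} to be invoked.

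The implication $(iv)\Rightarrow(i)$ is trivial, since $r=s$ gives $P_r=P_s$. The implication $(i)\Rightarrow(ii)$ follows directly from Corollary~\ref{Coro:HPHQequiv}: as $H_{P_r},H_{P_s}$ are equivalent in $\SL_2$ and isomorphic to $\A^2_\k$, there exist $\varphi\in\Aut(\SL_2)$ with $\varphi(H_{P_r})=H_{P_s}$ and $\psi=\eta\varphi\eta^{-1}\in\Aut(\A^3_\k)$ with $\psi(Z_{P_r})=Z_{P_s}$ — together with the extra data $\psi^*(t)=\mu t$ and $\psi(\Gamma)=\Gamma$, which I would keep for the rigid matching below. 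The implication $(ii)\Rightarrow(iii)$ is immediate, an automorphism of $\A^3_\k$ restricting to an isomorphism of the surfaces.

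The heart of the argument is $(iii)\Rightarrow(iv)$. Here I would first argue that the $t$-fibration on $Z_{P_r}$ is canonical: its unique degenerate fibre over $t=0$ is the disjoint union of the two lines $\{x=0\}$ and $\{x=1\}$, and — exactly as in the projectivisation argument inside Proposition~\ref{prop:SL2eqthenA3eq} — a minimal SNC completion has a boundary graph that is not a chain, so by Bertin's theorem the $\A^1$-fibration $\pi\colon Z_{P_r}\to\A^1_\k$ is unique up to an automorphism of the base. Hence any isomorphism $\Phi\colon Z_{P_r}\iso Z_{P_s}$ intertwines the two fibrations, and since each has a single degenerate fibre over $0$, the induced automorphism of the base fixes $0$; thus $\Phi^*(t)=\mu t$ for some $\mu\in\k^*$. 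I would then localise at $t$, where both surfaces become $\Spec\k[t,t^{-1}][x]$, write the induced $\k[t,t^{-1}]$-algebra isomorphism as $t\mapsto\mu t$, $x\mapsto a(t)x+b(t)$ with $a\in\k[t,t^{-1}]^\ast$ and $b\in\k[t,t^{-1}]$, and then impose that both $\Phi$ and $\Phi^{-1}$ extend regularly across the two components of the degenerate fibre. Matching the defining data of $Z_{P_r}$ and $Z_{P_s}$ under these constraints should pin $\Phi$ down and force $r=s$; a convenient organisation is to reduce, using the canonicity of the fibration, to the case where $\Phi$ is the restriction of a $\psi\in\Aut(\A^3_\k)$ with $\psi^*(t)=\mu t$ and $\psi(\Gamma)=\Gamma$, and then expand the identity $\psi^*(P_s)=c\,P_r$ modulo successive powers of $t$.

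The step I expect to be the main obstacle is this final rigid matching. The parameter $r$ enters the equation only at order $t^2$, so it is a ``higher-order'' gluing invariant of the degenerate fibre, invisible to the coarse data (number of components, their self-intersections) that are automatically preserved by any isomorphism. The delicate point is therefore to control the extension of $\Phi$ across $t=0$ to sufficiently high order in $t$ — equivalently, to track how the two sections meeting the degenerate fibre are transported — and to verify that regularity of both $\Phi$ and $\Phi^{-1}$ can be maintained only when $\mu=1$ and $r=s$. Closing this computation cleanly, rather than the softer fibration-uniqueness input, is where I anticipate the real work to lie.
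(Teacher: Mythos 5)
Your treatment of Assertion~(1) and of the implications $(\mathrm{iv})\Rightarrow(\mathrm{i})\Rightarrow(\mathrm{ii})\Rightarrow(\mathrm{iii})$ matches the paper exactly: Proposition~\ref{Prop:LiftPAssertions} (or Example~\ref{LargeFamilySimpleExamples}) gives the fibred embedding, Lemma~\ref{Lem:M0} gives $m=1>0$ so that $Z_{P_r}\not\simeq\A^2_\k$, and Proposition~\ref{prop:SL2eqthenA3eq}\eqref{PsiIsAlift} (equivalently Corollary~\ref{Coro:HPHQequiv}) gives $(\mathrm{i})\Rightarrow(\mathrm{ii})$. The problem is $(\mathrm{iii})\Rightarrow(\mathrm{iv})$, which is the mathematical core of the lemma and which you do not prove: you outline a strategy and explicitly defer ``the real work'' of the rigid matching to the end. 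As it stands this is a genuine gap, not a completed argument. The paper closes it by invoking \cite[Proposition~3.6]{DubPol09}, which states that $Z_{P_r}\simeq Z_{P_s}$ if and only if there exist $a,\mu\in\k^*$ and $\tau\in\k[t]$ with $S_r(at,x)=\mu^2 S_s(t,\mu^{-1}x+\tau(t))$, where $S_r(t,x)=(x-t)(x-1-t^2r(t))$; matching the two linear factors then splits into two cases, each of which forces $\tau=0$, $a=\pm\mu=1$ and $r=s$ by comparing coefficients modulo $t^2$. If you do not want to cite that result, you must actually carry out your proposed computation, and that is precisely the content of the Dubouloz--Poloni classification.

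A second, more specific concern: the ``softer fibration-uniqueness input'' you treat as routine is itself not justified by the paper for the surfaces $Z_{P_r}$. The Bertin/``boundary is not a chain'' argument in Proposition~\ref{prop:SL2eqthenA3eq} is applied to the surfaces $H_{P-a}\subset\SL_2$, whose minimal completions acquire a branched boundary only after the extra blow-ups along $\Gamma$; it is not applied to $Z_{P-a}\subset\A^3_\k$. For the $n=1$ Danielewski-type surfaces $Z_{P_r}=\{ty=S_r(t,x)\}$ a minimal completion has a zigzag boundary, and uniqueness of the $\A^1$-fibration is exactly the delicate point (the classical Danielewski surface $ty=x^2-1$ carries two distinct $\A^1$-fibrations). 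So your claim that any isomorphism $Z_{P_r}\iso Z_{P_s}$ automatically intertwines the $t$-projections needs an independent argument; this is again supplied by \cite{DubPol09} and is one reason the paper quotes that reference rather than rerunning the Bertin argument on the $Z$'s.
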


\begin{proof}For each $r\in \k[t]$, we write $S_r(t,x)=(x-t)(x-1-t^2r(t))\in \k[t,x]$ and observe that $P_r(t,x,y)=ty-S_r(t,x)$. 

(\ref{HPrfibredemb}): Since $P_r$ is of degree $1$ in $y$, it is a variable of 
the $\k(t)$-algebra $\k(t)[x,y]$. Moreover, $P_r(0,x,y)=S_r(0,x)=x(x-1)$ is of the  form  $\mu x^m(x-\lambda)$ (with $\mu,\lambda\in \k^*$ and $m\ge 0$).
The coefficient of $y$ in $P_r$ being $t$, the morphism $Z_{P_r} \to \A^1_{\k}$, $(t, x, y) \mapsto t$ is a trivial $\A^1$-bundle over $\A^1_{\k} \setminus \{0\}$. Proposition~\ref{Prop:LiftPAssertions} ($\eqref{ThmEqc}\Rightarrow\eqref{ThmEqb}$) then implies that $H_{P_r}\subset \SL_2$ is the image of a fibred embedding $\A^2_\k\hookrightarrow \SL_2$.

It remains to show that the assertions 
$(\ref{EquivRSHeq})-(\ref{EquivRSZeq})-(\ref{EquivRSZiso})-(\ref{EquivRSequal})$ of $(\ref{EquivRS})$ are equivalent. 

The implications $(\ref{EquivRSequal})\Rightarrow(\ref{EquivRSHeq})$ and $(\ref{EquivRSZeq})\Rightarrow (\ref{EquivRSZiso})$ are trivial.

Lemma~\ref{Lem:M0} implies that $Z_{P_r}$ and $Z_{P_s}$ are not isomorphic to $\A^2_\k$ (the integer $m$ being here equal to $1$). We can thus appply Proposition~\ref{prop:SL2eqthenA3eq}(\ref{PsiIsAlift}), which yields $(\ref{EquivRSHeq})\Rightarrow (\ref{EquivRSZeq})$. 

It remains then to show $(\ref{EquivRSZiso})\Rightarrow (\ref{EquivRSequal})$. According to \cite[Proposition~3.6]{DubPol09}, the surface $Z_{P_r}$ and $Z_{P_s}$ are isomorphic if and only if there exist $a, \mu \in \k^\ast$, $\tau \in \k[t]$, such that 
	\[
		S_r(at, x) = \mu^2 S_s(t, \mu^{-1} x + \tau(t)) 
		\quad \textrm{inside $\k[t, x]$}.
	\] 
	This corresponds to
	\[(x-at)(x-1-a^2t^2r(at))=(x+\mu(\tau(t)-t))(x+\mu(\tau(t)-1-t^2s(t)))\] and thus gives two possibilities:
	
	$(\mathrm{I})$:  $at=\mu (t-\tau(t))$ and $1+a^2t^2r(at)=\mu(1+t^2s(t)-\tau(t))$. The first equation yields $\tau(t)=(1-\frac{a}{\mu}) t$ and the second yields $\mu\tau(t)\equiv \mu-1\pmod{t^2}$, which gives $\tau=0$ and then $\mu=1$ and $a=1$.
The second equation thus yields $r(t)=s(t)$.
	
	$(\mathrm{II})$: $at=\mu(1+t^2s(t)-\tau(t))$ and $\mu (t-\tau(t))=1+a^2t^2r(at)$. 	This yields 
	\[
		1+a^2t^2r(at) - \mu t =-\mu\tau(t)=at-\mu(1+t^2s(t))
	\]
	and thus $1-\mu t\equiv -\mu+at \pmod{t^2}$, whence $\mu = -1$ and $a = 1$. Replacing in the equation above, we find $r(t)=s(t)$.
\end{proof}

The proof of Theorem~$\ref{thm:Fibered_Embeddings}$ is now clear:
\begin{proof}[Proof of Theorem~$\ref{thm:Fibered_Embeddings}$]
Assertion~\eqref{ThmFibered1} corresponds to Proposition~\ref{Prop:LiftPAssertions}. 
	
Assertion~\eqref{ThmFibered2} follows from Corollary~\ref{Coro:HPHQequiv}. 
	
Assertion~\eqref{ThmFibered3} follows from Lemma~\ref{Lemm:InfiniteFamily}, which yields hypersurfaces $H_{P_r}\subset \SL_2$ that are 
parametrised by $r\in\k[t]$, which are all images of fibred embeddings and are pairwise non-equivalent.
	\end{proof}

We finish this subsection with two explicit examples:
\begin{lemma}\label{Lemm:NonEquivZ}
Let us denote by $P,Q\in\k[t, x, y]$ the polynomials
	\[
		P  = t^2y - x(x+1) \quad \textrm{and} \quad  
		Q = t^2 y - x(x+1 -t^2) \, .
	\]
	Then, the following hold:
	\begin{enumerate}[label=$(\arabic*)$, ref=\arabic*]
	\item\label{ZPQexequi}
	The hypersurfaces $Z_P,Z_Q\subset \A^3_\k$ given by $P=0$ and $Q=0$ are equivalent.
	\item\label{HPQexnotequi}
	The hypersurfaces $H_P,H_Q\subset \SL_2$ given by $P=0$ and $Q=0$ are both images of fibred embeddings but are not equivalent.
	\end{enumerate}
\end{lemma}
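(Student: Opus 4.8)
For \eqref{ZPQexequi} the plan is an explicit elementary automorphism. Since $Q-P=t^2x$, the automorphism $\alpha\in\Aut(\A^3_\k)$ determined by $\alpha^*(t)=t$, $\alpha^*(x)=x$, $\alpha^*(y)=y-x$ satisfies
\[
\alpha^*(Q)=t^2(y-x)-x(x+1-t^2)=t^2y-x(x+1)=P,
\]
so $\alpha(Z_P)=Z_Q$ and the two surfaces are equivalent in $\A^3_\k$. For the first half of \eqref{HPQexnotequi}, note that each of $P,Q$ has degree $1$ in $y$ with leading coefficient $t^2\in\k(t)^\ast$, hence is a variable of the $\k(t)$-algebra $\k(t)[x,y]$ and defines a trivial $\A^1$-bundle over $\A^1_\k\setminus\{0\}$; moreover $P(0,x,y)=Q(0,x,y)=-x(x+1)=\mu x^m(x-\lambda)$ with $m=1$, $\mu=\lambda=-1$. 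The criterion of Proposition~\ref{Prop:LiftPAssertions} then shows that $H_P$ and $H_Q$ are images of fibred embeddings, and since $m=1$, Lemma~\ref{Lem:M0} shows that neither $Z_P$ nor $Z_Q$ is isomorphic to $\A^2_\k$.

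The heart of the matter is the non-equivalence. I would argue by contradiction: if $H_P$ and $H_Q$ were equivalent, then, as both are isomorphic to $\A^2_\k$ and not equal to the standard image, Corollary~\ref{Coro:HPHQequiv} provides $\psi\in\Aut(\A^3_\k)$ with $\psi^*(t)=\mu t$, $\psi(Z_P)=Z_Q$ and $\psi(\Gamma)=\Gamma$, where $\Gamma=\{t=xy-1=0\}$. Restricting $\psi$ to the invariant plane $\{t=0\}\simeq\A^2_\k$ gives an automorphism $\bar\psi$ preserving the conic $\{xy=1\}$, so by \cite[Theorem~2~$(iii)$]{BlaSta} it is $(x,y)\mapsto(\xi x,\xi^{-1}y)$ or $(x,y)\mapsto(\xi y,\xi^{-1}x)$ for some $\xi\in\k^\ast$. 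But $\bar\psi$ must also preserve the common restriction $Z_P\cap\{t=0\}=Z_Q\cap\{t=0\}=\{x(x+1)=0\}$, a union of two vertical lines; this excludes the anti-diagonal case and forces $\xi=1$ in the diagonal case. Hence $\bar\psi=\id$, i.e. $\psi^*(x)=x+ta$ and $\psi^*(y)=y+tb$ with $a,b\in\k[t,x,y]$.

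Now I would expand the relation $\psi^*(Q)=cP$ ($c\in\k^\ast$) in powers of $t$. Comparing the part not divisible by $t$ gives $c=1$; the lowest coefficient after one division yields $a(0,x,y)(2x+1)=0$, hence $a=ta_1$; and the next coefficient gives the key identity
\[
a_1(0,x,y)\,(2x+1)=(\mu^2-1)\,y+\mu^2 x .
\]
When $\car(\k)\neq2$ this is at once contradictory: $2x+1$ is a genuine linear polynomial, so the left-hand side is a $\k[x]$-multiple of it, whereas matching the $y$-coefficient forces $\mu^2=1$ and then $a_1(0,x,y)(2x+1)=x$ has no solution in $\k[x,y]$. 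This settles \eqref{HPQexnotequi} in characteristic $0$ and in odd characteristic.

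The step I expect to be the main obstacle is characteristic $2$, where $2x+1$ degenerates to $1$ and the identity above merely prescribes $a_1(0,x,y)$, so the defining relation no longer obstructs by itself. There one must additionally exploit that $\psi$ is a genuine automorphism, not just a map respecting the relation. Concretely, after composing with the shear $\alpha$ of \eqref{ZPQexequi} one reduces to an automorphism $\theta$ with $\theta^*(t)=\mu t$ preserving $Z_P$ and restricting to $\alpha$ on $\{t=0\}$; such a $\theta$ is forced to have the shape $\theta^*(x)=x+t^2\phi$, $\theta^*(y)=y+\phi+t^2\phi^2$ with $\phi(0,x,y)=x$, for which $\theta^*(P)=P$ holds identically in $\phi$. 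The point is then that the Frobenius square $\phi^2$ makes the map induced on a general fibre $\{t=t_0\}$ non-injective, so no admissible $\phi$ yields an automorphism. Verifying this non-injectivity for every such $\phi$ (rather than for the linear examples one checks first) is the delicate part; once it is in place, no $\psi$ as above can exist, and $H_P$ and $H_Q$ are non-equivalent over every field.
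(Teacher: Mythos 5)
Parts \eqref{ZPQexequi} and the ``fibred embedding'' half of \eqref{HPQexnotequi} of your argument coincide with the paper's: the shear $(t,x,y)\mapsto(t,x,y-x)$, the verification of condition \eqref{ThmEqc} of Proposition~\ref{Prop:LiftPAssertions}, and the reduction via Corollary~\ref{Coro:HPHQequiv} to an automorphism $\psi\in\Aut(\A^3_\k)$ with $\psi^*(t)=\mu t$, $\psi(Z_P)=Z_Q$, $\psi(\Gamma)=\Gamma$ and $\psi|_{\{t=0\}}=\id$ are all as in the paper (the paper derives $\psi|_{\{t=0\}}=\id$ from preservation of $C=\{t=x(x+1)=0\}$ and then of $\Gamma$; you use \cite{BlaSta} first and $C$ second — both are fine). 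From that point on you diverge: you expand $\psi^*(Q)=cP$ in powers of $t$ and extract the identity $a_1(0,x,y)(2x+1)=(\mu^2-1)y+\mu^2x$. I checked this computation and it is correct, and in characteristic $\neq 2$ it does yield a contradiction, so your argument is complete and more elementary there — it avoids the paper's appeal to the structure theorem for automorphisms of Danielewski surfaces. The paper instead forms $\xi=\theta^{-1}\psi$, which preserves $Z_P$ and restricts to $(x,y)\mapsto(x,x+y)$ on $C$, and invokes \cite[Theorem~3.11]{DubPol09}: every automorphism of $Z_P$ acts on $C$ through the explicit group $G_0\cup G_1$, and $(x,y)\mapsto(x,x+y)$ is not in it. That argument is uniform in the characteristic.

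The genuine gap is characteristic $2$, which you acknowledge but do not close, and your sketched repair contains a false claim. The shape $\theta^*(x)=x+t^2\phi$, $\theta^*(y)=y+\phi+t^2\phi^2$ with $\phi(0,x,y)=x$ is indeed forced and does satisfy $\theta^*(P)=P$ identically, but it is \emph{not} true that ``the Frobenius square $\phi^2$ makes the map induced on a general fibre non-injective'': for $\phi=x$ the fibre over $t_0$ is $(x,y)\mapsto((1+t_0^2)x,\,y+x+t_0^2x^2)$, a triangular automorphism of $\A^2$ for every $t_0\neq 1$; the only degenerate fibre is the special one $t_0=1$. Worse, there exist $\phi$ (e.g.\ $\phi=x+t^2y$) for which the Jacobian determinant of $\theta$ is the constant $1$, so no Jacobian obstruction is available either, and one must genuinely prove non-injectivity of some fibre for \emph{every} admissible $\phi$ — this is exactly the nontrivial statement you would need, and it is what the citation of \cite[Theorem~3.11]{DubPol09} supplies in the paper. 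As written, your proof establishes the lemma only for $\car(\k)\neq 2$.
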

\begin{proof}
To get~\eqref{ZPQexequi}, it suffices to observe that the linear automorphism $\theta\in \Aut(\A^3_\k)$ given by $(t,x,y)\mapsto (t, x, y-x)$ satisfies $\theta^*(Q)=P$, so $\theta(Z_P)=Z_Q$.

Since $P,Q$ are of degree $1$ in $y$, both are variables of the $\k(t)$-algebra
$\k(t)[x,y]$. Moreover, $P(0,x,y)=Q(0,x,y)=-x(x+1)$ is of the  form  $\mu x^m(x-\lambda)$ (with $\mu,\lambda\in \k^*$ and $m=1\ge 0$). 
Since the coefficient of $y$ in $P$ and $Q$ is $t^2$, the morphisms 
$Z_{P},Z_Q \to \A^1_{\k}$, $(t, x, y) \mapsto t$ are trivial $\A^1$-bundles 
over $\A_{\k}^1\setminus \{0\}$. Proposition~\ref{Prop:LiftPAssertions} ($\eqref{ThmEqc}\Rightarrow\eqref{ThmEqb}$) then implies that $H_{P},H_Q\subset \SL_2$ are images of fibred embeddings $\A^2_\k\hookrightarrow \SL_2$.

To get~\eqref{HPQexnotequi}, we suppose that there is $\varphi\in \Aut(\SL_2)$ such that $\varphi(H_P)=H_Q$ and derive a contradiction. Corollary~\ref{Coro:HPHQequiv} yields an automorphism $\psi\in \Aut(\A^3_\k)$ such that $\psi^*(t)=\mu t$ for some $\mu\in \k^*$ and such that $\psi(Z_P)=Z_Q$ and $\psi(\Gamma)=\Gamma$, where $\Gamma\subset \A^3_\k$ is the conic given by $t=xy-1=0$. The restriction of $\psi$ to the hyperplane $H\subset \A^3_\k$ given by $t=0$ then preserves $\Gamma$ and also the curve $C=H\cap Z_P=H\cap Z_Q$, given by $t=x(x+1)=0$ (which is isomorphic to two copies of $\A^1$). The fact that $C$ is preserved implies that $\psi_{|_H}$ is of the form 
$(x, y) \mapsto (x,ay+p(x))$ or $(x,y)\mapsto (-1-x,ay+p(x))$ for some $a\in \k^*$ and $p\in \k[x]$. The fact that $\Gamma$ is preserved implies that $\psi_{|_H}=\mathrm{id}$.

The element $\xi=\theta^{-1}\psi\in \Aut(\A^3_\k)$ then satisfies $\xi(Z_P)=Z_P$,  $\xi^{*}(t)=\mu t$ and $\xi_{|_H}$ is the automorphism $(x,y)\mapsto (x,x+y)$. To show that this is impossible, we use \cite[Theorem~3.11]{DubPol09} 
	to see that every automorphism of 
	$Z_P$ preserves $C$ and its action on $C$ corresponds to an 
	element of the subgroup $G=G_0\cup G_1\simeq G_0\rtimes (\Z/2\Z)$ 
	of $\Aut(C)$ given by  
	\[
		\begin{array}{rcl}
		G_0&=&\{(x, y) \mapsto (x, \alpha y + (2x+1) \beta)\mid 
		\alpha\in \k^*,
		\beta\in \k\}\\
		G_1&=&\{(x, y) \mapsto (-1-x, \alpha y + (2x+1) \beta)
		\mid \alpha\in \k^*,
		\beta\in \k\}.
		\end{array}
		\qedhere
	\]
\end{proof}

We then study an explicit example of a fibred embedding $\A^2_{\k}\hookrightarrow \SL_2$ whose image is not equivalent to the standard embedding.
\begin{example}
According to the above study, the ``simplest'' example of a hypersurface 
$E\subset\SL_2$ being the image of a fibred embedding but not being 
equivalent to the image of the standard embedding is given by 
\[
	E=\{(t,u,x,y)\in \A^4_\k \mid xy-tu=1,ty=x(x-1)\}.
\]
Indeed, using the polynomial $P=ty-x(x-1)$, which yields $P(0,x,y)=-x(x-1)$, the surface $E$ is the image of a fibred embedding $\rho\colon \A^2_{\k}\hookrightarrow \SL_2$ (Example~\ref{LargeFamilySimpleExamples}) but is not equivalent to $\rho_1(\A^2_{\k})$ (Lemma~\ref{Lem:M0}).

One can construct an explicit embedding 
$\rho \colon \A^2_\k \hookrightarrow \SL_2$ having image $E$
in the following way. First, denoting by $E_{t}\subset E$ and $(\A^2_{\k})_{t} \subset \A^2_\k=\Spec(\k[x,t])$ the open subsets given by $t\not=0$, we get isomorphisms
\[\begin{array}{ccccccc}
 (\A^2_{\k})_{t}& \iso & E_t & \text{and} &  E_t & \iso & (\A^2_{\k})_{t}\\
 (x,t) & \mapsto & \begin{pmatrix}
				  x & t \\
				  \frac{x^2(x-1)-t}{t^2} & \frac{x(x-1)}{t}
			     \end{pmatrix} 
			     & & \begin{pmatrix}
				  x & t \\
				  u & y
			     \end{pmatrix}
			      &\mapsto & (x,t).
 \end{array}\]

To obtain a fibred embedding $\rho\colon \A^2_{\k} \hookrightarrow \SL_2$ having image equal to $E$, we need to remove the denominators of the isomorphism
$(\A^2_\k)_t \iso E_t$. 
We then compose with the automorphism of $(\A^2_{\k})_{t}$ given by 
$(x,t)\mapsto (t^2x+t+1,t)$ and get isomorphisms
\[
	\begin{array}{ccc}
		\A^2_{\k}& \stackrel{\rho}{\longrightarrow} & E \\
		(x,t) & \mapsto & 
			     \begin{pmatrix}
				  1+t+t^2x & t \\
				  \frac{(1+t+t^2x)^2(t+t^2x)-t}{t^2} & \frac{(1+t+t^2x)(t+t^2x)}{t}
			     \end{pmatrix}
	 \end{array}
\]
and
\[
 	\begin{array}{ccc}
		E & \stackrel{\rho^{-1}}{\longrightarrow}  & \A^2_{\k} \\
	 		    \begin{pmatrix}
				  x & t \\
				  u & y
			     \end{pmatrix}
			      &\mapsto & \left(\frac{x-t-1}{t^2},t\right) \, .
	\end{array}
\]
We can observe that all components of $\rho$ are indeed polynomials, and that $\frac{x-t-1}{t^2}\in \k[E]$. To show the latter, 
we compute 
$y=\frac{x(x-1)}{t}\in \k[E]$, $u=\frac{xy-1}{t}=\frac{x^2(x-1)-t}{t^2}\in \k[E]$, $y^2-ux+u=\frac{x-1}{t}\in \k[E]$, 
which yields $\frac{x-t-1}{t^2}=u-(x+1)\left(\frac{x-1}{t}\right)^2\in \k[E]$.

Writing 
\[
	\tilde\rho(x,t)=A\rho(x,t)A \quad  \text{with} \quad
	A=\left(\begin{array}{rr} 1 & 0 \\ -1 & 1\end{array}\right)\in \SL_2 \, ,
\] 
we get an equivalent closed embedding $ \tilde\rho\colon 
\A^2_{\k}\to \SL_2$, which is an isomorphism
\[\begin{array}{rccc}
 \tilde\rho\colon &\A^2_{\k}& \iso & \tilde{E}\\
& (x,t) & \mapsto & \begin{pmatrix}
				  1+t^2x & t \\
				  x+3tx+2t^2x^2+2t^3x^2+t^4x^3 & 1+tx+2t^2x+t^3x^2
			     \end{pmatrix},
 \end{array}\]
 where $\tilde{E}=\{(t,u,x,y)\in \A^4_\k \mid xy-tu=1,t(y+t)=(x+t)(x+t-1)\}.$ 
 The morphism $\tilde\rho$ corresponds to the 
closed embedding $\A^2_\k\hookrightarrow \A^4_\k$
 \[
 	\begin{array}{rllll}
	\A^2_{\k}& \hookrightarrow & \A^4_\k\\
	(x,t) & \mapsto & (t,1+t^2x, 1+tx+2t^2x+t^3x^2,
				  x+3tx+2t^2x^2+2t^3x^2+t^4x^3)\\
 	\end{array}
\]
that we can simplify using elementary automorphisms of $\A^4_\k$
to the embedding 
\[
	\begin{array}{rllll}		
	\A^2_{\k}& \hookrightarrow & \A^4_\k\\		  
	(x,t) & \mapsto & (t,t^2x, tx+t^3x^2,
				  x+2t^2x^2-t^3x^2+t^4x^3)\\
				  &=&(t,t^2x,tx(1+t^2x),x+t^2x^2(2-t+t^2x))
 	\end{array}
\]
\end{example}

\begin{question}
Is the closed embedding \[\begin{array}{rllll}
\A^2_{\k}& \hookrightarrow & \A^4_\k\\
(x,t) & \mapsto & (t,t^2x,tx(1+t^2x),x+t^2x^2(2-t+t^2x))
 \end{array}\]
 equivalent to the standard one?
\end{question}

\subsection{Embeddings of $\A^2_\k$ into $\SL_2$ of small degree}
This last subsection consists in showing the second part of Remark~\ref{Rem:HolSmall}, which claims that if all component functions of a closed
embedding $f \colon \A^2 \hookrightarrow \SL_2$ 
are polynomials of degree $\leq 2$, then $f$ is equivalent to $\rho_\lambda$
for a certain $\lambda \in \k^\ast$. This will be done in  Proposition~\ref{Prop:Low_degree} below, after a few lemmas.

 We first make the following easy observation:
\begin{lemma}\label{Lem:fibredembddingsst}
For each fibred embedding
\[
\begin{array}{cccc}
\rho\colon&\A^2_\k&\hookrightarrow& \SL_2\\
&(s,t)&\mapsto &\begin{pmatrix}
				  a(s,t) & t \\
				  c(s,t) & b(s,t)
			     \end{pmatrix}\end{array}\]
			     $($with $a,b,c\in \k[s,t])$
			      there is an automorphism $g\in \Aut(\SL_2)$ such that 
			      $g\rho$ is a fibred embedding given by \[
\begin{array}{cccc}
g\rho\colon&\A^2_\k&\hookrightarrow& \SL_2\\
&(s,t)&\mapsto &\begin{pmatrix}
				  1+stp(s,t) & t \\
				  s(p(s,t)+q(s,t)+stp(s,t)q(s,t)) & 1+stq(s,t)
			     \end{pmatrix}\end{array}\]
			     for some $p,q\in \k[s,t]$ such that $p(s,0)+q(s,0)\in \k^*$ and such that $\deg(1+stp(s,t))\le \deg(a)$, $\deg(1+stq(s,t))\le \deg(b)$ $($where the degree is here the degree of polynomials in $s,t)$.
\end{lemma}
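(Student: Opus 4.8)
The plan is to reach the normal form by post-composing $\rho$ with a short explicit sequence of automorphisms of $\SL_2$ that fix the function $t$ (and hence preserve the fibred structure, since the $(1,2)$-entry stays equal to $t$). First I would use the defining relation $ab-tc=1$ of $\SL_2$: reducing modulo $t$ gives $a(s,0)b(s,0)=1$ in $\k[s]$, so both $a(s,0)$ and $b(s,0)$ are units, i.e.\ $a(s,0)=\alpha\in\k^\ast$ and $b(s,0)=\alpha^{-1}$. Post-composing with the automorphism $\left(\begin{smallmatrix} x & t\\ u & y\end{smallmatrix}\right)\mapsto\left(\begin{smallmatrix}\alpha^{-1}x & t\\ u & \alpha y\end{smallmatrix}\right)$, which fixes $t$ and $u$ and does not change $\deg a$ or $\deg b$, I may assume $a(s,0)=b(s,0)=1$. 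Thus $a=1+tA$ and $b=1+tB$ with $A,B\in\k[s,t]$.

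The key step is the remark that for any $h\in\k[t,y]$ the right multiplication
\[
\begin{pmatrix} x & t\\ u & y\end{pmatrix}\longmapsto \begin{pmatrix} x & t\\ u & y\end{pmatrix}\begin{pmatrix} 1 & 0\\ h & 1\end{pmatrix}=\begin{pmatrix} x+th & t\\ u+yh & y\end{pmatrix}
\]
is an automorphism of $\SL_2$ fixing $t$ (its inverse is given by $-h$), and symmetrically that left multiplication by $\left(\begin{smallmatrix}1&0\\ h'&1\end{smallmatrix}\right)$ with $h'\in\k[t,x]$ is such an automorphism. Taking $h=-A(0,t)\in\k[t]$ replaces $a$ by $a-tA(0,t)=1+t\bigl(A-A(0,t)\bigr)$, leaves $b$ unchanged, preserves $a(s,0)=1$, and makes the new first-row polynomial $A-A(0,t)$ vanish at $s=0$, hence divisible by $s$. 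Then taking $h'=-B(0,t)$ does the same to $B$ while leaving $a$ (and the divisibility of $A$ by $s$) untouched. After these two moves $A=sp$ and $B=sq$ for some $p,q\in\k[s,t]$, so $a=1+stp$ and $b=1+stq$; since the image lies in $\SL_2$, the entry $c$ is forced to be $(ab-1)/t=s(p+q+stpq)$, which is exactly the desired form.

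It then remains to verify the two side conditions. For the degree bounds (total degree in $s,t$), from $a=1+tA$ one gets $\deg A=\deg a-1$ and $\deg A(0,t)\le\deg A$, so $\deg\bigl(tA(0,t)\bigr)\le\deg a$ and the replacement $a\mapsto a-tA(0,t)$ cannot increase the degree; hence $\deg(1+stp)\le\deg a$, and symmetrically $\deg(1+stq)\le\deg b$. For $p(s,0)+q(s,0)\in\k^\ast$, I would restrict the resulting embedding to the line $\{t=0\}$: its image is $\left(\begin{smallmatrix}1 & 0\\ s(p(s,0)+q(s,0)) & 1\end{smallmatrix}\right)$, a closed embedding of $\A^1_\k$ whose image lies in the one-dimensional lower unipotent subgroup $\cong\A^1_\k$, hence an isomorphism onto it; therefore $s\bigl(p(s,0)+q(s,0)\bigr)$ has degree one in $s$, forcing $p(s,0)+q(s,0)$ to be a nonzero constant.

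The only genuinely delicate point, and the one I would be most careful about, is the choice of multipliers in the key step: one must allow the unipotent entries $h,h'$ to depend on $t$ as well as on a single diagonal coordinate. The literal analogues of the elementary automorphisms of $\A^2_\k$ would use $h=h(y)$ and $h'=h'(x)$, and these only modify $A$ by polynomials in $1+tB(0,t)$, which is not enough to remove $A(0,t)$; it is precisely the freedom $h\in\k[t]$ that kills $A(0,t)$ in one stroke and makes the whole argument elementary.
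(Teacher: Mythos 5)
Your proposal is correct and follows essentially the same route as the paper: a diagonal automorphism to normalise $a(s,0)=b(s,0)=1$, then right and left multiplication by lower unitriangular matrices with entries $-A(0,t),-B(0,t)\in\k[t]$ to force $a(0,t)=b(0,t)=1$, and finally restriction to $\{t=0\}$ to see that $s(p(s,0)+q(s,0))$ is a coordinate on $\A^1_\k$, hence $p(s,0)+q(s,0)\in\k^*$. The explicit verification of the degree bounds is a welcome addition that the paper leaves implicit.
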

\begin{remark}
The standard embedding $\rho_1$ is of the above form with $p=0$ and $q=1$. More generally, the embeddings $\{\rho_\lambda\}_{\lambda\in \k^*}$ of Theorem~\ref{Thm:StdEmbSL2} are given by $p=0$ and $q=\lambda$.
\end{remark}
\begin{proof}Replacing $t$ with $0$ yields two elements $a(s,0),b(s,0)\in \k[s]$ such that $a(s,0)\cdot b(s,0)=1$. This implies that $a(s,0),b(s,0)\in \k^*$. Applying the automorphism 
\[
				\begin{pmatrix}
				  x & t \\
				  u & y
			     \end{pmatrix}\mapsto \begin{pmatrix}
				  \mu x & t \\
				  u & \mu^{-1}y
			     \end{pmatrix}
\] 
for some $\mu\in \k^*$, we can assume that $a(s,0)=b(s,0)=1$. We then apply 
\[
				\begin{pmatrix}
				  x & t \\
				  u & y
			     \end{pmatrix}\mapsto \begin{pmatrix}
				  x & t \\
				  u & y
			     \end{pmatrix}\cdot \begin{pmatrix}
				  1 & 0 \\
				  d(t) & 1
			     \end{pmatrix}
%			     =\begin{pmatrix}
%				  x+d(t)t & t \\
%				  u+d(t)y & y
%			     \end{pmatrix}
\]
			     for some $d\in \k[t]$ and replace $a(s,t)$ with $a(s,t)+td(t)$, so can assume that $a(0,t)=1$. Applying similarly an automorphism of the form 
\[
			    \begin{pmatrix}
				  x & t \\
				  u & y
			     \end{pmatrix}\mapsto \begin{pmatrix}
				  1 & 0 \\
				  e(t) & 1
			     \end{pmatrix}\cdot \begin{pmatrix}
				  x & t \\
				  u & y
			     \end{pmatrix},
\] 
we can assume that $b(0,t)=1$. This yields $p,q\in \k[s,t]$ such that $a=1+stp$ and $b=1+stq$, which yields $c=s(p+q+stpq)$. Replacing $t$ with $0$ yields a closed embedding
			     \[
\begin{array}{cccc}
\A^1_\k&\hookrightarrow& \SL_2\\
s&\mapsto &\begin{pmatrix}
				  1 & 0 \\
				  s(p(s,0)+q(s,0)) & 1
			     \end{pmatrix}\end{array},\]whence $p(s,0)+q(s,0)\in \k^*$.
\end{proof}
\begin{corollary}\label{Cor:Fiberedembsmall}
Each fibred embedding
\[
\begin{array}{cccc}
\rho\colon&\A^2_\k&\hookrightarrow& \SL_2\\
&(s,t)&\mapsto &\begin{pmatrix}
				  a(s,t) & t \\
				  c(s,t) & b(s,t)
			     \end{pmatrix}\end{array}\]
			     where $a,b,c\in \k[s,t]$ are such that $\deg a+\deg b\le 4$ 
			     is equivalent to the embedding
\[
\begin{array}{cccc}
\rho_\lambda \colon &\A^2_\k&\hookrightarrow& \SL_2\\
&(s,t)&\mapsto &\begin{pmatrix}
				  1 & t \\
				  \lambda s & 1+\lambda st
			     \end{pmatrix}\end{array}
\]
for some $\lambda\in \k^*$.
\end{corollary}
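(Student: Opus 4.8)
The plan is to first put $\rho$ into the normal form provided by Lemma~\ref{Lem:fibredembddingsst}, and then to exploit the degree bound $\deg a + \deg b \le 4$ to reduce to very few cases. Concretely, Lemma~\ref{Lem:fibredembddingsst} furnishes $g \in \Aut(\SL_2)$ such that $g\rho$ is again a fibred embedding of the shape
\[
(s,t) \mapsto \begin{pmatrix} 1 + stp(s,t) & t \\ s(p+q+stpq) & 1 + stq(s,t)\end{pmatrix},
\]
with $p,q \in \k[s,t]$, $p(s,0)+q(s,0) \in \k^*$, and $\deg(1+stp) \le \deg a$, $\deg(1+stq) \le \deg b$. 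Since $g$ is an automorphism and equivalence is transitive, it suffices to prove $g\rho \sim \rho_\lambda$, so from now on I work with $g\rho$. The key numerical observation is that $\deg(1+stp)$ equals $0$ when $p=0$ and is at least $2$ otherwise (the constant $1$ cannot cancel a term of degree $2+\deg p\ge 2$), and likewise for $q$. Hence $\deg(1+stp) + \deg(1+stq) \le \deg a + \deg b \le 4$ leaves only three possibilities: $p=0$, or $q=0$, or both $p,q$ are nonzero constants (they cannot both vanish, as $p(s,0)+q(s,0)\in\k^*$).

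Next I would dispatch the two genuine cases. If $p=0$, then the first entry of $g\rho$ is identically $1$, so the image of $g\rho$ is the closed hypersurface $\{x=1\}=\rho_1(\A^2_\k)$; since $g\rho$ is a closed embedding of $\A^2_\k$ into the irreducible surface $\rho_1(\A^2_\k)$, its image is all of $\rho_1(\A^2_\k)$, and Theorem~\ref{Thm:StdEmbSL2}\eqref{Thm:StdEmbSL2ass1} shows that $g\rho$ is equivalent to $\rho_\lambda$ for some $\lambda\in\k^*$. If instead $q=0$, the image is the hypersurface $\{y=1\}$; composing with the automorphism of $\SL_2$ interchanging $x$ and $y$ (which preserves $xy-tu=1$) moves this image onto $\{x=1\}=\rho_1(\A^2_\k)$, and again Theorem~\ref{Thm:StdEmbSL2}\eqref{Thm:StdEmbSL2ass1} applies.

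The remaining case, where $p,q\in\k^*$ are nonzero constants, is the main obstacle, and I expect to rule it out rather than to treat it directly. Here $q(x-1)=stpq=p(y-1)$ holds on the image, so the closure $Z\subset\A^3_\k$ of $\eta(g\rho(\A^2_\k))$ is the plane $\{qx-py=q-p\}$, defined by $P=qx-py-(q-p)$. This $P$ is a variable of the $\k(t)$-algebra $\k(t)[x,y]$, being affine-linear with nonzero coefficients, so Lemma~\ref{Lemm:Polynomialp}\eqref{DegenerateFibre} would force $P(0,x,y)$ to have the form $\mu x^m(x-\lambda)$ or $\mu y^m(y-\lambda)$. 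But $P(0,x,y)=qx-py-q+p$ involves both $x$ and $y$ with nonzero coefficients, hence lies in neither $\k[x]$ nor $\k[y]$, a contradiction. (Equivalently, the image surface is isomorphic to $\{tu=(x-1)(qx+p)/p\}$, which is not isomorphic to $\A^2_\k$, so $g\rho$ could not be a closed embedding.) Thus this case does not occur, and in the surviving cases $g\rho$—and therefore $\rho$—is equivalent to some $\rho_\lambda$, which completes the proof.
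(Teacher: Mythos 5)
Your proof is correct, and its skeleton coincides with the paper's: reduce to the normal form of Lemma~\ref{Lem:fibredembddingsst}, use the degree bound to see that either $p=0$, or $q=0$, or $p,q\in\k^*$, settle the first two cases via Theorem~\ref{Thm:StdEmbSL2}\eqref{Thm:StdEmbSL2ass1} (and the $x\leftrightarrow y$ swap), and rule out the third. The only genuine divergence is how the third case is excluded. The paper stays elementary: it observes that a closed embedding would force $\k[s,t]=\k[a,b,c,t]=\k[t,st,s(st+\xi)]$ with $\xi=\frac{p+q}{pq}\neq 0$, and this fails because the morphism $(s,t)\mapsto (t,st,s(st+\xi))$ lands in the irreducible hypersurface $xz=y(y+\xi)$ while missing the line $x=y+\xi=0$, so it cannot be a closed embedding. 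You instead compute that the closure $Z$ of $\eta(g\rho(\A^2_\k))$ is the plane $qx-py=q-p$, note that its linear defining polynomial is a variable of the $\k(t)$-algebra $\k(t)[x,y]$, and invoke Lemma~\ref{Lemm:Polynomialp}\eqref{DegenerateFibre} to force $P(0,x,y)$ into $\k[x]$ or $\k[y]$, which is impossible since both coefficients $q$ and $-p$ are nonzero. This is a valid contradiction (and your parenthetical remark that the hypersurface $tu=(x-1)(qx+p)/p$ is a Danielewski-type surface is essentially the geometry underlying both arguments), but it leans on the heavier machinery of Lemma~\ref{Lemm:Polynomialp}, whereas the paper's version is self-contained at this point of the text. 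Both routes are sound; what yours buys is a uniform use of the already-developed theory of fibred embeddings, what the paper's buys is independence from that theory for this particular corollary.
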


\begin{proof}
Applying Lemma~\ref{Lem:fibredembddingsst}, one can assume that $a=1+stp$, $b=1+stq$, $c=s(p+q+stpq)$ for some $p,q\in \k[s,t]$ with $p(s,0)+q(s,0)\in \k^*$. If $p=0$, then  $\rho(\A^2)$ is equal to $\rho_1(\A^2)$, so the result follows from Theorem~\ref{Thm:StdEmbSL2}\eqref{Thm:StdEmbSL2ass1}. 
The same holds if $q=0$ by applying the automorphism
\[
			     \begin{pmatrix}
				  x & t \\
				  u & y
			     \end{pmatrix}\mapsto \begin{pmatrix}
				  y & t \\
				  u & x
			     \end{pmatrix} .
\] 
To finish the proof, we assume that $pq\not=0$ and derive a contradiction.
The fact that $\deg a+\deg b\le 4$ implies that $p,q\in \k^*$. Hence,
$\k[s, t] = \k[a,b,c,t]=\k[t,st,s(p+q+stpq)]=\k[t,st,s(st+\xi)]$ 
			     with $\xi=\frac{p+q}{pq} \neq 0$, and thus the morphism
			     \[\begin{array}{rcl}
			     \A^2_\k & \mapsto & \A^3_\k\\
			     (s,t) & \mapsto & (t,st,s(st+\xi))\end{array}\]
			     would be a closed embedding. This is false, since 
			     the image is properly contained in the irreducible 
			     hypersurface given by 
			     $\{(x,y,z)\in \A^3_\k\mid xz=y(y+\xi)\}$
			     (the line given by $x=y+\xi=0$ is missing).
			 \end{proof}

It remains to generalise Corollary~\ref{Cor:Fiberedembsmall} to the case of embeddings $\A^2_\k\hookrightarrow \SL_2$ of small degree (which are fibred or not).

In the sequel we will use the following subgroups of $\Aut(\A^2_\k)$:
\begin{definition}\label{Defi:Aff2GL2}
\[\begin{array}{lll}
\Aff_2(\k)&=&\left\{(s,t)\mapsto (as+bt+e,cs+dt+f)\left| \begin{pmatrix}
				  a & b \\
				  c & d
			     \end{pmatrix}\in \GL_2(\k),e,f\in \k\right\}\right.\vspace{0.1cm} \\ 
\GL_2(\k)&=&\left\{(s,t)\mapsto (as+bt,cs+dt)\left| 
			     a,b,c,d \in \k, \ ad-bc \neq 0 \right\}\right.\end{array}
			     \]
			     \end{definition}

\begin{lemma}\label{Lemma:GL2Aff2simpleform}
Let $\k$ be an algebraically closed field and let $\rho\colon \A^2_\k\to \A^2_\k\setminus \{0\}$ be a morphism of the form
\[(s,t)\mapsto (f(s,t),g(s,t))\]
such that $f,g$ have degree $2$ and that the homogeneous parts $f_2$ and $g_2$ of $f,g$ of degree~$2$ are linearly independent. Then, there exist $\alpha\in \Aff_2$ and $\beta\in \GL_2$ such that 
\[\beta\rho\alpha=(s,t)\mapsto (s^2,st+1).\]
\end{lemma}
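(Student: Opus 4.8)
The plan is to normalise the leading degree-$2$ parts by linear changes of coordinates, and then to kill the lower-order terms by exploiting that $\rho$ avoids the origin. I regard $f_2,g_2$ as binary quadratic forms in $s,t$, and use that $\beta\in\GL_2$ acts on the pair $(f_2,g_2)$ by taking linear combinations, while the linear part of $\alpha\in\Aff_2$ acts by substitution in $(s,t)$. For \emph{Step 1} I would look at the pencil $\langle f_2,g_2\rangle\subset\mathrm{Sym}^2$, a line $\ell\cong\p^1_\k$ in $\p(\mathrm{Sym}^2)=\p^2_\k$, and intersect it with the discriminant conic $D$ (the image of the Veronese $[w]\mapsto[w^2]$, parametrising forms with a double root). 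As $\k$ is algebraically closed and $D$ is irreducible, $\ell$ meets $D$ in two points with multiplicity, giving two cases. If $\ell$ is secant, the pencil contains two non-proportional squares $w_1^2,w_2^2$; using them as target basis and sending $w_1\mapsto s,\ w_2\mapsto t$ yields $(f_2,g_2)=(s^2,t^2)$. If $\ell$ is tangent, the pencil contains a unique square, normalised to $s^2$; writing the second generator (after subtracting a multiple of $s^2$) as $c\,st+d\,t^2$, the tangency forces $d=0$, and rescaling gives $(f_2,g_2)=(s^2,st)$.

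The crucial observation for \emph{Step 2} is that $\rho(\A^2_\k)\cap\{0\}=\varnothing$ means precisely that $g$ has no zero on the curve $C=\{f=0\}\subset\A^2_\k$. Over $\k=\bar\k$ each component of $C$ is rational: if the coefficient of $t$ in $f$ is nonzero then $C$ is a graph $t=\phi(s)$ with $\deg\phi=2$, isomorphic to $\A^1_\k$; otherwise $f\in\k[s]$ and $C$ is a union of vertical lines. On such a component $g$ restricts to a one-variable polynomial, which over an algebraically closed field must have a zero as soon as it is nonconstant. In the case $(f_2,g_2)=(s^2,t^2)$ this restriction is always nonconstant (degree $4$ on the parabola, or a monic quadratic in $t$ on each vertical line), a contradiction; hence this case cannot occur and we are left with $(f_2,g_2)=(s^2,st)$.

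In the surviving case I write $f=s^2+a_1s+a_2t+a_0$ and $g=st+b_1s+b_2t+b_0$. The same restriction argument first forces $a_2=0$ (otherwise $g|_C$ has degree $3$), so that $f\in\k[s]$ and $C$ is one or two vertical lines. Nonvanishing of $g$ on a line $s=r$ forces the restriction $g(r,t)=(r+b_2)t+(b_1r+b_0)$ to be a nonzero constant, i.e. $r+b_2=0$ and $b_1r+b_0\neq0$; if $f$ had two distinct roots $r_1\neq r_2$ this would give $r_1=-b_2=r_2$, a contradiction, so $f=(s-r)^2$ is a perfect square and $b_2=-r$. Translating $s\mapsto s+r$ then makes $f=s^2$ and, thanks to the relation $b_2=-r$, removes the $t$-term of $g$, leaving $g=st+b_1's+b_0'$ with $b_0'\neq0$. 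A shear $t\mapsto t-b_1'$ clears $b_1's$, and a rescaling $t\mapsto b_0't$ together with a diagonal $\beta$ normalises the constant to $1$, yielding $\beta\rho\alpha=(s,t)\mapsto(s^2,st+1)$.

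I expect the main obstacle to be twofold: organising the pencil/discriminant bookkeeping of Step 1 so that the two normal forms $(s^2,t^2)$ and $(s^2,st)$ are obtained cleanly, and—more importantly—recognising that the nonvanishing hypothesis should be activated by restricting $g$ to the curve $\{f=0\}$. Once that viewpoint is adopted, the exclusion of $(s^2,t^2)$ and the conclusion that $f$ is a perfect square both drop out uniformly, and the argument avoids completing any square, so it works in all characteristics, including $\car(\k)=2$.
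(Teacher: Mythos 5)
Your central idea --- that $\rho$ avoiding the origin forces $g$ to be nowhere zero on $\{f=0\}$, and that a nonconstant polynomial on a rational curve over $\kk$ must vanish somewhere --- is exactly the engine of the paper's proof, and your Steps~2 and~3 are correct and complete. The organisational difference is in Step~1: the paper only normalises $f_2=s^2$ (choosing $\xi$ so that $g_2+\xi f_2$ is a square and swapping the two coordinates if necessary) and then lets the nonvanishing of $g$ on the components of $\{f=0\}$ dictate the entire shape of $g$; you instead normalise the whole pair $(f_2,g_2)$ to $(s^2,t^2)$ or $(s^2,st)$ by pure linear algebra on the pencil before invoking the geometry. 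In characteristic $\neq 2$ your dichotomy is correct (tangency of the pencil line to the discriminant conic at the point $[s^2]$ does force the $t^2$-coefficient of the second generator to vanish), and your exclusion of $(s^2,t^2)$ packages uniformly what the paper does case by case.

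However, Step~1 breaks down in characteristic $2$ --- precisely where you claim extra robustness. There the locus of perfect squares in $\p(\mathrm{Sym}^2)$ is the \emph{line} $\{b=0\}$, since $(\lambda s+\mu t)^2=\lambda^2s^2+\mu^2t^2$, not a conic; the secant/tangent dichotomy collapses, and ``the tangency forces $d=0$'' is false: the pencil $\langle s^2,\ st+t^2\rangle$ contains a unique square yet is not equivalent to $\langle s^2, st\rangle$ under source and target linear changes (every member of the latter vanishes at $[0:1]$, while no member of the former besides $s^2$ does). So after your normalisation you may be left with $g_2=st+dt^2$, $d\neq 0$, a case Step~3 does not treat. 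The gap is fixable with your own tools: with $f_2=s^2$ and $d\neq 0$, the restriction of $g$ to $\{f=0\}$ has degree $4$ on the graph $t=\phi(s)$ when the $t$-coefficient of $f$ is nonzero, and is a nonconstant quadratic $dt^2+\cdots$ in $t$ on each vertical line otherwise; either way it vanishes somewhere, excluding this case. But as written the assertion is wrong and the case is missing, whereas the paper sidesteps the issue entirely by never normalising $g_2$ in the first place.
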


\begin{proof}
We first observe that replacing $\rho$ with $\beta\rho\alpha$, where $\alpha\in \Aff_2$ and $\beta\in \GL_2$, does not change the degree of $f,g$ or the fact that $f_2$ and $g_2$ are linearly independent. We then observe that we can assume that $f_2=s^2$. If $f_2$ is a square, it suffices to  replace $f$ with $\rho\alpha$ for some $\alpha\in \GL_2$. If $f_2$ is not a square, we choose $\xi\in \k$ such that $g_2+\xi f_2$ is a square (this is possible since the discriminant of $g_2+\xi f_2$ is a polynomial of degree $2$ in $\xi$ and $\k$ is algebraically closed). 
We then apply an element of $\GL_2$ at the target to replace $f_2,g_2$ with $g_2+\xi f_2,f_2$,and then apply as before an element of $\GL_2$ at the source, to obtain $f_2=s^2$.

For each irreducible factor $P$ of $f$, we denote by $C_P\subset \A^2_\k=\Spec(\k[s,t])$ the irreducible curve given by $P=0$, and observe that $g$ yields an invertible function on $C_P$.

$(a)$ If $f$ is a product of factors of degree $1$, all belong to $\k[s]$, since $f_2=s^2$. We  can then write $f=\prod_{i=1}^2(s-\lambda_i)$, for some $\lambda_i\in \k$. If  $\lambda_1=\lambda_2$, we replace $s$ with $s-\lambda_1$ and get $f=s^2$, which yields  $g=s(a s+b t+c)+d$ for some $a,b,c\in \k,d\in \k^*$. The parts of degree $2$ of $f$ and $g$ being linearly independent, we get $b\not=0$. Replacing $t$ with $\frac{t-as-c}{b}$, we replace $g$ with $st+d$. We then apply diagonal elements of the form $(s, t)\mapsto (s, \mu t)$, 
$\mu \in k^\ast$ at the source and target and replace $d$ with $1$, which yields the desired form. To finish case $(a)$, it remains to see 
that $\lambda_1 \neq \lambda_2$ is impossible. To derive this contradiction, we apply an element of $\Aff_2$ at the source and get $f=s(s-1)$ This yields $g=sp(s,t)+\mu$, where $\mu\in \k^*$ and $p\in \k[s,t]$ is of degree $1$. We moreover obtain $p(1,t)\in \k\setminus \{-\mu\}$, so $p(s,t)=(s-1)\xi+\nu$ for some $\xi,\nu\in \k$. This yields $g\in \k[s]$, which is impossible since $g_2$ is not a multiple of $f_2=s^2$.

$(b)$ We can now assume that $f$ is not a product of factors of degree $1$, i.e.~$f$ is irreducible, and derive a contradiction. We observe that the curve $C_f\subset \A^2$ given by $f=0$ is isomorphic to $\A^1$. Indeed, the closure of $C_f$ in $\p^2_\k$ is an irreducible and thus a smooth conic with one point at infinity since $f_2=s^2$ (recall that $\k$ is assumed to be algebraically closed). 
This implies that the restriction $g |_{C_f}$ is a non-zero constant and so $g=\mu +\xi f$ for some $\mu\in \k^\ast,\xi\in \k$. This contradicts the fact that $f_2$ and $g_2$ are linearly independent.
\end{proof}

\begin{proposition}
\label{Prop:Low_degree}
Each closed embedding
\[
	\begin{array}{cccc}
		\rho\colon&\A^2_\k&\hookrightarrow& \SL_2\\
		&(s,t)&\mapsto &\begin{pmatrix}
				  f_{11}(s,t) & f_{12}(s,t) \\
				  f_{21}(s,t) & f_{22}(s,t)
			     \end{pmatrix}
	\end{array}
\]
where $f_{11},f_{12},f_{21},f_{22}\in \k[s,t]$ have at most degree $2$ is 
equivalent to the embedding
\[
\begin{array}{cccc}
\rho_\lambda \colon &\A^2_\k&\hookrightarrow& \SL_2\\
&(s,t)&\mapsto &\begin{pmatrix}
				  1 & t \\
				  \lambda s & 1+\lambda st
			     \end{pmatrix}\end{array}
\]
			     for some $\lambda\in \k^*$.
			     \end{proposition}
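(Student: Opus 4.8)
The plan is to reduce $\rho$ to a fibred embedding and then invoke Corollary~\ref{Cor:Fiberedembsmall}. First I record a reduction principle that frees me to reparametrise the source. Since $\rho_\lambda(\A^2_\k)=\rho_1(\A^2_\k)$ for every $\lambda\in\k^\ast$, Theorem~\ref{Thm:StdEmbSL2}\eqref{Thm:StdEmbSL2ass1} says that any closed embedding with image $\rho_1(\A^2_\k)$ is equivalent to some $\rho_\lambda$. Hence it suffices to produce $g\in\Aut(\SL_2)$ and $\alpha\in\Aut(\A^2_\k)$ so that $g\circ\rho\circ\alpha$ is equivalent to some $\rho_\mu$: writing $\rho_\mu=\varphi\circ g\circ\rho\circ\alpha$ and passing to images gives $\varphi g(\rho(\A^2_\k))=\rho_\mu(\A^2_\k)=\rho_1(\A^2_\k)$, so $\varphi g\rho$ has image $\rho_1(\A^2_\k)$ and $\rho$ is equivalent to some $\rho_\lambda$. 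Thus I may postcompose by any automorphism of $\SL_2$ and reparametrise the source at will.

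Next I would exploit the first-row projection $\pr\colon\SL_2\to\A^2_\k\setminus\{0\}$, $\begin{pmatrix} x & t\\ u & y\end{pmatrix}\mapsto(x,t)$, which avoids the origin because $x=t=0$ contradicts $xy-tu=1$. Then $\pr\circ\rho=(f_{11},f_{12})$ is a morphism $\A^2_\k\to\A^2_\k\setminus\{0\}$ with both components of degree $\le2$. The key point is that $\GL_2(\k)$ acts on the first row through automorphisms of $\SL_2$ defined over $\k$ (right multiplications by constant matrices in $\SL_2(\k)$ together with the diagonal scalings $\begin{pmatrix} x&t\\u&y\end{pmatrix}\mapsto\begin{pmatrix}\mu x&t\\u&\mu^{-1}y\end{pmatrix}$), all of which are linear on the four entries and so preserve the bound $\deg\le2$. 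I claim the degree-$2$ leading parts of $f_{11},f_{12}$ are linearly dependent. If not, then after extending scalars to $\overline{\k}$ — which preserves the closed-embedding property and does not affect linear (in)dependence of forms with coefficients in $\k$ — Lemma~\ref{Lemma:GL2Aff2simpleform} normalises the first row, via an element of $\Aff_2$ on the source and an element of $\GL_2$ realised as above, to $(s^2,st+1)$; solving $s^2y-(st+1)u=1$ for $u,y$ of degree $\le2$ then forces all four entries to lie in the proper subalgebra $\overline{\k}[s^2,st,t^2]\subsetneq\overline{\k}[s,t]$, contradicting the surjectivity of $\rho^\ast$.

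Once the leading parts of $f_{11},f_{12}$ are known to be dependent, a shear $\begin{pmatrix} x&t\\u&y\end{pmatrix}\mapsto\begin{pmatrix} x&t-\gamma x\\u&y\end{pmatrix}$ (or, if $f_{11}$ already has degree $\le1$, a column swap) inside the realised $\GL_2$ lets me assume $\deg f_{12}\le1$, keeping every entry of degree $\le2$. The case $f_{12}\equiv0$ is impossible, since it would force $f_{11}f_{22}=1$, hence $f_{11},f_{22}\in\k^\ast$, and then $\k[f_{11},f_{12},f_{21},f_{22}]=\k[f_{21}]\ne\k[s,t]$. If $f_{12}$ is a nonzero constant, a scaling makes $f_{12}=1$; then $\rho(\A^2_\k)$ is a two-dimensional closed irreducible subvariety of the hypersurface $\{t=1\}\simeq\A^2_\k$, hence equals $\{t=1\}$, which is the image of the standard embedding (Corollary~\ref{Corollary:Embnu}), and the reduction principle finishes. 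If $\deg f_{12}=1$, then $f_{12}$ is a variable of $\k[s,t]$, so an affine automorphism $\alpha$ turns it into the coordinate $t$; then $g\rho\alpha$ is a fibred embedding with all entries of degree $\le2$, so $\deg f_{11}+\deg f_{22}\le4$, and Corollary~\ref{Cor:Fiberedembsmall} applies. The main work — and the only genuine obstacle — is the case analysis on the leading forms, concentrated in ruling out the independent case; this is where Lemma~\ref{Lemma:GL2Aff2simpleform} and the passage to $\overline{\k}$ are needed, whereas all remaining normalisations are carried out directly over $\k$.
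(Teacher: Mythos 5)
Your proposal is correct and follows essentially the same route as the paper's proof: the hard case of linearly independent quadratic leading forms in the first row is ruled out via Lemma~\ref{Lemma:GL2Aff2simpleform} and a proper-subalgebra contradiction (your $\overline{\k}[s^2,st,t^2]$ versus the paper's argument with the ideal $(s,t)^2$), and the remaining cases are reduced to a fibred embedding handled by Corollary~\ref{Cor:Fiberedembsmall}. The only (harmless) imprecision is that $\{t=1\}$ is the image of $\nu$ from Corollary~\ref{Corollary:Embnu}, which is shown equivalent to $\rho_1$ only in the proof of Theorem~\ref{Thm:StdEmbSL2}, rather than being literally the image of $\rho_1$.
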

			     
			     \begin{proof}Applying 
			     Theorem~\ref{Thm:StdEmbSL2}, 
			     one only needs to show the existence of an automorphism of $\SL_2$ that sends $\rho(\A_{\k}^2)$ onto $\rho_1(\A_{\k}^2)$. We distinguish the following cases:
			     
			     $(a)$ Suppose first that one of the polynomials $f_{ij}$ is constant. One can assume that it is $f_{11}$ by using permutation of coordinates (with signs). The case $f_{11}=0$ is impossible, since the image would then be contained in 
\[
		\left.\left\{\begin{pmatrix}
				  x & t \\
				  u & y
		\end{pmatrix}\in \SL_2 \right| x=0\right\}
		\simeq (\A^1_{\k} \setminus \{0\})\times \A^1_{\k}.
\] 
We then have $f_{11}\not=0$ and apply a diagonal automorphism of $\SL_2$ to get $f_{11}=1$, which corresponds to $\rho(\A^2_{\k})=\rho_1(\A^2_{\k})$.
			     
$(b)$ Suppose then that one of the $f_{ij}$ has degree $1$. Applying permutations one can assume that $f_{12}$ has degree $1$.
Applying an element of $\Aff_2$ at the source (see Definition~\ref{Defi:Aff2GL2}), we do not change the degree of the ${f_{ij}}'s$ and can assume that $f_{12}=t$.  
Since $\deg(f_{11}f_{22})=\deg(f_{12}f_{21})\le 4$, the result follows from 
Corollary~\ref{Cor:Fiberedembsmall}.
			      
$(c)$ It remains to study the case where 
$\deg(f_{ij})= 2$ for each $i,j\in \{1,2\}$. If the homogeneous parts of $f_{11}$ and $f_{12}$ of degree $2$ are collinear, we apply 
\[	
		\begin{pmatrix}
				  x & t \\
				  u & y
		\end{pmatrix}\mapsto \begin{pmatrix}
				  x & t \\
				  u & y
		\end{pmatrix}\begin{pmatrix}
				  1 & \mu \\
				  0 & 1
		\end{pmatrix}=\begin{pmatrix}
				  x & t+\mu x \\
				  u & y+\mu u
		\end{pmatrix}
\] 
for some $\mu\in \k$ and obtain $\deg(f_{12})\le 1$, which reduces to the cases $(a)$, $(b)$. To achieve the proof of $(c)$, we now assume that the homogeneous parts of $f_{11}$ and $f_{12}$ of degree $2$ are linearly independant and prove that this implies that $\k[f_{11},f_{12},f_{21},f_{22}]\subsetneq \k[s,t]$ (which contradicts the fact that $\rho$ is a closed embedding). To show this, one can extend the scalars and assume that $\k$ is algebraically closed. We then apply Lemma~\ref{Lemma:GL2Aff2simpleform} to the morphism $\nu\colon \A^2_\k\to \A^2_\k\setminus \{0\}$ given by $(s,t)\mapsto (f_{11}(s,t),f_{12}(s,t))$, and find $\alpha\in \Aff_2(\k)$, $\beta\in \GL_2(\k)$ such that $\beta\nu\alpha=(s,t)\mapsto (s^2,st+1)$. We write $\mu=\det(\beta)\in \k^*$ and replace $\rho$ with $\hat\beta \rho\alpha$, where $\hat\beta\in \Aut(\SL_2)$ is of the form
\[
		\hat\beta\colon
		\begin{pmatrix}
				  x & t \\
				  u & y
		\end{pmatrix}\mapsto \begin{pmatrix}
				  1 & 0 \\
				  0 & \mu^{-1}
		\end{pmatrix}\cdot \beta\cdot\begin{pmatrix}
				  x & t \\
				  u & y
		\end{pmatrix}. 
\]
This change being made, we obtain $f_{11}=s^2$, $f_{12}=st+1$. Since $1=f_{11}f_{22}-f_{12}f_{21}=s^2f_{22}-(st+1)f_{21}$, we obtain $f_{21}=st-1 + g(s,t) s^2$ for some $g\in \k[s,t]$. This implies that $f_{11},f_{12}-1,f_{21}+1$ all belong to the maximal ideal $(s,t)^2\subset \k[s,t]$, which yields the desired contradiction  $\k[f_{11},f_{12},f_{21},f_{22}]=\k[f_{11},f_{12}-1,f_{21}+1,f_{22}]\subsetneq\k[s,t]$.    
\end{proof}
		
\section{A non-trivial embedding of $\A^1$ into $\SL_2$, over the reals}
\label{Sec:Realexample}
In this section, we provide over the field $\k = \R$ 
an explicit example of an algebraic embedding 
$\A^1_{\R} \hookrightarrow \SL_2$ which is not equivalent 
to the standard embedding
\[
	\begin{array}{cccc}
	\tau_1 \colon & \A^1_\R & \hookrightarrow & \SL_2 \\
	& t & \mapsto & \begin{pmatrix}
				1 & 0 \\
				t & 1
			\end{pmatrix} \, .
	\end{array}
\]
\begin{example}
\label{Ex:Shastri}
In \cite{Shastri92} the closed embedding
\[
		\begin{array}{cccc}
	\gamma \colon & \A^1 & \hookrightarrow & \A^3 \\
	& t & \mapsto & (t^3-3t, t^4-4t^2-1, t^5-10t)
	\end{array}
\]
is given. This one is not equivalent to the standard embedding 
$\A^1 \hookrightarrow \A^3$, $t \mapsto (t, 0, 0)$, over the field $\R$ of real numbers. 
The reason is that it corresponds, as an embedding $\R \hookrightarrow \R^3$, to the (open) trefoil knot.

The fact that $\gamma$ is a closed embedding, over any field $\k$, can be shown as follows. Writing $\gamma_1=t^3-3t,
 \gamma_2=t^4-4t^2-1, \gamma_3=t^5-10t \in \k[t]$, we get
\[t=3\gamma_3-12\gamma_1-5\gamma_1\gamma_2+\gamma_2\gamma_3-\gamma_1^3.\]

The fact that $\gamma\colon \R\to \R^3$ corresponds to the open trefoil knot can be seen by looking at the three projections:
\begin{center}\begin{tikzpicture}[scale=0.50]
\begin{axis}[axis lines=middle, xmin=-3, xmax = 3, ymin=-5.1, ymax = 1,xtick={-3,-2,...,3},
  ytick={-5,...,1}]
    \addplot[My Line Style, color=gray,  variable=\t, domain=-2.1:2.1]({\t^3-3*\t},{\t^4-4*\t^2-1});
\end{axis}
\node at (3.5,-1) {$t\mapsto (t^3-3t,t^4-4t^2-1)$};
\end{tikzpicture}
\begin{tikzpicture}[scale=0.50]
\begin{axis}[axis lines=middle, xmin=-3, xmax = 3, ymin=-15, ymax = 15,xtick={-3,-2,...,3},
  ytick={-15,-10,-5,...,15}]
    \addplot[My Line Style, color=gray,  variable=\t, domain=-2.1:2.1]({\t^3-3*\t},{\t^5-10*\t});
\end{axis}
\node at (3.5,-1) {$t\mapsto (t^3-3t,t^5-10t)$};
\end{tikzpicture}
\begin{tikzpicture}[scale=0.50]
\begin{axis}[axis lines=middle, xmin=-5.1, xmax = 1, ymin=-15, ymax = 15,xtick={-5,...,1},
  ytick={-15,-10,-5,...,15}]
    \addplot[My Line Style, color=gray,  variable=\t, domain=-2.1:2.1]({\t^4-4*\t^2-1},{\t^5-10*\t});
\end{axis}
\node at (3.5,-1) {$t\mapsto (t^4-4t^2-1,t^5-10t)$};
\end{tikzpicture}\end{center}
\end{example}

We now use Example~\ref{Ex:Shastri} to provide a similar example in $\SL_2$:
\begin{lemma}\label{Exam:RSL2R}\item
\begin{enumerate}[label=$(\arabic*)$, ref=\arabic*]
\item \label{ClosedEmb}
For each field $\k$ of characteristic $\not=2$, the morphism 
\[\begin{array}{cccc}
\tau\colon &\A^1 & \hookrightarrow & \SL_2\\
&t& \mapsto & \begin{pmatrix}
			t^3-3t & t^4-4t^2-1\\
			1+ \frac{t^2(17t^6-56t^4-137t^2+452)}{16}& 
			\frac{t(17t^8-73t^6-149t^4+609t^2+172)}{16}
		     \end{pmatrix}
		     \end{array}\]
		     is a closed embedding.
		     \item\label{kRnotstd}
		     If $\k=\R$, then $\tau$ is not equivalent to the standard embedding, because the fundamental group $\pi_1(\SL_2(\R)\setminus \tau(\R))$ is not isomorphic to the free group $\pi_1(\SL_2(\R)\setminus \tau_1(\R))$.\end{enumerate}
\end{lemma}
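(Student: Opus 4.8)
The plan is to verify in Part~(1) that $\tau$ maps into $\SL_2$ and that the source parameter can be recovered polynomially from the matrix entries, and in Part~(2) to detect a trefoil inside $\SL_2(\R)$ by a fundamental‑group computation.

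For Part~(1), write $s$ for the coordinate on $\A^1$ and $\begin{pmatrix} x & t \\ u & y\end{pmatrix}$ for the coordinates of $\SL_2$. A direct expansion shows that $xy-tu$ evaluated along $\tau$ equals $1$, so $\tau(\A^1)\subset\SL_2$. To prove that $\tau$ is a closed embedding it then suffices to show that $\tau^\ast\colon\k[\SL_2]\to\k[s]$ is surjective, i.e.\ that $s$ lies in its image. The first row of $\tau$ is the pair $(x\circ\tau,\,t\circ\tau)=(s^3-3s,\,s^4-4s^2-1)$, i.e.\ the first two coordinates of Shastri's embedding from Example~\ref{Ex:Shastri}. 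Setting
\[
\beta=16u-17t^2-80x^2-105t-104,\qquad \delta=16y-17x^3-80xt
\]
in $\k[\SL_2]$, one computes $\beta\circ\tau=16s^2$ and $\delta\circ\tau=-48s^5+188s^3-68s$, and combining these with $x\circ\tau=s^3-3s$ gives
\[
\bigl(\delta+3x\beta-44x\bigr)\circ\tau=64s.
\]
Since $\car(\k)\neq2$, the regular function $S:=\tfrac1{64}(\delta+3x\beta-44x)\in\k[\SL_2]$ satisfies $S\circ\tau=s$; hence $\tau^\ast$ is surjective and $\tau$ is a closed embedding.

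For Part~(2), I would first record that $\SL_2(\R)$ is diffeomorphic to $S^1\times\R^2$, an open solid torus, which I realise as the complement $S^3\setminus K_0$ of an unknotted circle $K_0$. In this model $\tau(\R)$ and $\tau_1(\R)$ are properly embedded lines whose closures are knots $\widehat\tau$ and $\widehat{\tau_1}$. The standard line $\tau_1(\R)$ is unknotted and trivially placed, so $\SL_2(\R)\setminus\tau_1(\R)$ is homotopy equivalent to a graph and its fundamental group is free. For $\tau$, the point is that the embedding is built on Shastri's trefoil: putting $w:=S^5-10S$, one has $w\circ\tau=s^5-10s$, so $x$, $t$, $w$ define a map $\Phi\colon\SL_2\to\A^3$ with $\Phi\circ\tau=\gamma$ equal to Shastri's trefoil. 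Reading a Wirtinger presentation off the three planar projections of Example~\ref{Ex:Shastri} identifies the relevant knot group as $\langle a,b\mid aba=bab\rangle$. This group has abelianisation $\Z$ but is non‑abelian, whereas a free group with cyclic abelianisation is infinite cyclic, hence abelian; so it is not free. Consequently $\pi_1(\SL_2(\R)\setminus\tau(\R))$ contains this non‑free group (as a free factor of the link group of $\widehat\tau\cup K_0$) and is not free, while $\pi_1(\SL_2(\R)\setminus\tau_1(\R))$ is free, and the two embeddings are inequivalent.

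The main obstacle is the geometric comparison in Part~(2): turning the purely algebraic identity $\Phi\circ\tau=\gamma$ into a statement about the ambient isotopy type of $\tau(\R)$ inside the solid torus $\SL_2(\R)$, and in particular confirming that $\widehat\tau$ is genuinely the trefoil and controlling its linking with the core $K_0$. This requires honest three‑dimensional topology — compactifying $\SL_2(\R)$, analysing how $\tau(\R)$ crosses the locus $t=0$ where $\eta\colon\SL_2\to\A^3$ ceases to be an isomorphism, and pinning down the knot type — rather than the algebra of Part~(1); once the knot is identified, the non‑freeness is the easy group‑theoretic step above.
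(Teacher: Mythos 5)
Your Part~(1) is correct and takes a genuinely different route from the paper. You verify $xy-tu=1$ along $\tau$ and then exhibit $s$ explicitly in the image of $\tau^\ast$; I checked your identities $\beta\circ\tau=16s^2$, $\delta\circ\tau=-48s^5+188s^3-68s$ and $(\delta+3x\beta-44x)\circ\tau=64s$, and they all hold, so surjectivity of $\tau^\ast$ (hence the closed-immersion property) follows since $64$ is invertible when $\car(\k)\neq 2$. The paper instead never writes such a formula: it factors $\tau$ through a section of the $\A^1$-bundle $p\colon \SL_2\to\A^2\setminus\{(0,0)\}$ over the singular quartic $\Gamma=\{x^2(x^2-4)-t(t^2+9t+24)=16\}$, obtaining a closed embedding $\Gamma\times\A^1\hookrightarrow\SL_2$ through which Shastri's embedding factors. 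Your computation is more elementary and self-contained; the paper's factorization is what it then reuses for the topological argument in Part~(2), so the two approaches are not interchangeable downstream.

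Part~(2) has a genuine gap, which you yourself flag. The identity $\Phi\circ\tau=\gamma$ with $\Phi=(x,t,S^5-10S)$ carries no topological content: for \emph{any} closed embedding $\A^1\hookrightarrow\SL_2$ one can extend the three coordinate functions of $\gamma$ along it, so such a $\Phi$ always exists and cannot by itself certify that $\tau(\R)$ is knotted in $\SL_2(\R)$. What is actually needed — and what the paper supplies — is (i) a homeomorphism from a suitable neighbourhood $p^{-1}(U)$ of $\tau(\R)$ in $\SL_2(\R)$, minus $\tau(\R)$, onto $\R^3\setminus\gamma(\R)$ (built from a global continuous trivialisation $\hat f$ of $p$ extending the algebraic section over $\Gamma(\R)$, together with a homeomorphism $h\colon U\to\R^2$ fixing $\Gamma(\R)$ pointwise and homotopic to the inclusion), and (ii) a proof that the inclusion-induced map $\pi_1(p^{-1}(U)\setminus\tau(\R))\to\pi_1(\SL_2(\R)\setminus\tau(\R))$ is injective, so that non-freeness of the trefoil group propagates to the ambient complement (using that subgroups of free groups are free). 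Your assertion that the trefoil group sits inside $\pi_1(\SL_2(\R)\setminus\tau(\R))$ ``as a free factor of the link group of $\widehat\tau\cup K_0$'' is exactly the unproved injectivity step in disguise, and the closure $\widehat\tau$ in $S^3$ meets the core curve $K_0$, so the complement is not literally a link complement. Your group-theoretic remark that $\langle a,b\mid aba=bab\rangle$ is not free (non-abelian with abelianisation $\Z$) is a nice, correct supplement to the paper, but the geometric bridge from the algebra to the topology is the heart of the proof and is missing from your proposal.
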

\begin{proof}
\eqref{ClosedEmb}: The fact that $\tau$ is a closed embedding can be done explicitely by giving a formula for $t$, but can also be shown by using the $\A^1$-bundle
\[\begin{array}{cccc}
	p\colon &\SL_2 &  \to & \A^2 \setminus \{ (0, 0)\} \\
	  &\begin{pmatrix}
		x & t \\
		u & y
	   \end{pmatrix}
	& \mapsto & (x, t).
	\end{array}\]
	
Writing $\gamma_1=t^3-3t,
 \gamma_2=t^4-4t^2-1, \gamma_3=t^5-10t \in \k[t]$ as in Example~\ref{Ex:Shastri}, we get
 $\gamma_1^2(\gamma_1^2-4)-\gamma_2(\gamma_2^2+9\gamma_2+24)=16$ and thus get a birational morphism 
 \[\begin{array}{ccc}
 \A^1&\to &\Gamma = \{(x,t)\in \A^2\mid x^2(x^2-4)-t(t^2+9t+24)=16\} \\
 t & \mapsto & (\gamma_1(t),\gamma_2(t))\end{array}\]
from $\A^1$ to the singular affine quartic curve $\Gamma\subset \A^2$. We then get a morphism
\[\begin{array}{rccc}
	f\colon & \Gamma&\to&  \SL_2\\
	&(x,t) & \to & \begin{pmatrix}
		x & t \\
		\frac{t^2+9t+24}{16} & \frac{x(x^2-4)}{16}
	   \end{pmatrix}
	\end{array}\]
	which satisfies $p \circ f=\mathrm{id}_\Gamma$ and is thus a section of 
	$p$ over $\Gamma$. This implies that
\[
	\begin{array}{ccc}
	\Gamma \times \A^1 & \hookrightarrow & \SL_2 \\
	((x, t), a)& \mapsto & \begin{pmatrix}
						1 & 0 \\
						a & 1
				   	\end{pmatrix}f(x, t) 
	\end{array}
\]
is a closed embedding. Since $\gamma\colon \A^1\to \Gamma\times \A^1\subset \A^3$ is a closed embedding,
the morphism
\[
	\begin{array}{cccc}
	\tau \colon & \A^1 & \hookrightarrow & \SL_2 \\
	& t & \mapsto & \begin{pmatrix}
						1 & 0\\
						t^5-10t  & 1
				   	\end{pmatrix}f(\gamma_1(t),\gamma_2(t))
	\end{array}
\]
is a closed embedding. Replacing $\gamma_1$ and $\gamma_2$ in the above formula yields the explicit form of the morphism given in the statement of the lemma.
	
	\eqref{kRnotstd}: In the remaining part of the proof, we work over $\k=\R$ and use the Euclidean topology. The $\R$-bundle $p\colon \SL_2(\R)\to \R^2\setminus \{(0,0)\}$ is trivial, as it admits a (rational) continuous section given by 
	\[\begin{array}{cccc}
	\xi \colon & \R^2\setminus \{(0,0)\} & \to & \SL_2(\R)\\
	& (x,t)&\mapsto &\begin{pmatrix}
		x & t \\
		-\frac{t}{x^2+t^2} & \frac{x}{x^2+t^2}
	   \end{pmatrix}.
	\end{array}\]
	This yields a birational diffeomorphism
	\[
	\begin{array}{cccc}
		\varphi \colon &\R^2\setminus \{(0,0)\}\times \R & \to & \SL_2(\R) \\
		&((x, t), a) & \mapsto & \begin{pmatrix}
							1 & 0 \\
							a & 1 
						\end{pmatrix}
						\xi(x,t) \, .
	\end{array}
	\] 
	
	In particular, $\SL_2(\R)\setminus \tau_1(\R)$ is diffeomorphic to $\R^2\setminus \{(0,0),(0,1)\}\times \R$, which implies that the fundamental group $\pi_1(\SL_2(\R)\setminus \tau_1(\R))$ is a free group (over two generators). It remains to show that $\pi_1(\SL_2(\R)\setminus \tau(\R))$ is not a free group. This will imply that no diffeomorphism of $\SL_2(\R)$ sends $\tau(\R)$ onto $\tau_1(\R)$, and in particular no algebraic automorphism defined over $\R$.
	
We extend $f\colon \Gamma(\R)\to \SL_2(\R)$ to a global continuous section 
$\hat f\colon \R^2\setminus \{(0,0)\}\to \SL_2(\R)$ of $p$ 
(which exists, since
$p$ is a trivial $\R$-bundle). 
This yields a rational diffeomorphism
\[
	\begin{array}{ccc}
	g\colon \R^2\setminus \{(0,0)\} \times \R & \iso & \SL_2(\R) \\
	((x, t), a)& \mapsto & \begin{pmatrix}
						1 & 0 \\
						a & 1
				   	\end{pmatrix}\hat{f}(x, t) 
	\end{array}
\]
which maps $\gamma(\R)$ onto $\tau(\R)$.

	We take an open subset $U \subset \R^2 \setminus \{ (0, 0) \}$ (for the Euclidean topology) that contains the singular curve $\Gamma(\R)$ and a  homeomorphism
$h \colon U \iso \R^2$ which fixes $\Gamma(\R)$ pointwise, and which is homotopic to the inclusion $U\hookrightarrow \R^2$, via a homotopy that fixes $\Gamma(\R)$ pointwise. We can for instance take $U=\{(x,t)\in \R^2\mid t\le x^2 - \frac{1}{2}\}$ and construct a homeomorphism and a homotopy 
which preserve the fibres of the projection $(x,t)\mapsto x$. \begin{center}\begin{tikzpicture}[scale=0.60]
\begin{axis}[axis lines=middle, xmin=-4, xmax = 4, ymin=-5.1, ymax = 3,xtick={-3,-2,...,3},
  ytick={-5,...,3},axis on top]
    \addplot[My Line Style, color=lightgray,  variable=\t, domain=-4:4]({\t},{\t^2-0.5});\addplot [
            My Line Style,
            draw=none,
            fill=lightgray!25,
            domain=-4:4,
            % restrict the y values to the maximum y axis value
            restrict y to domain*=-5.1:3,
        ] {x^2-0.5}
            % continue the path to close it
            |- (\pgfkeysvalueof{/pgfplots/xmin},\pgfkeysvalueof{/pgfplots/ymin})
        ;
        % and on top of that the line
        \addplot [
            My Line Style,
            draw=lightgray,
            domain=-4:4,
        ] {x^2-0.5}
        ;
    \addplot[My Line Style, color=black,  variable=\t, domain=-4:4]
    ({\t^3-3*\t},	{\t^4-4*\t^2-1});
    \node[] at (axis cs: 2,2) {$U$};
\end{axis}
\end{tikzpicture}\end{center}The homeomorphisms
\begin{equation}
	\label{Eq:Homeom}
	\tag{$\star$}
	p^{-1}(U) \setminus \tau(\R) 
	\stackrel{g^{-1}}{\longrightarrow} 
	\left(U \times \R\right) \setminus \gamma(\R)
	\stackrel{ h\times \mathrm{id}}{\longrightarrow} \R^3 \setminus \gamma(\R)
\end{equation}
yield isomorphisms of the fundamental groups
\[
	\pi_1(p^{-1}(U) \setminus \tau(\R)) \iso 
	\pi_1((U \times \R) \setminus \gamma(\R)) \iso 
	\pi_1(\R^3 \setminus \gamma(\R)) \, .
\]

Since $\gamma \colon \R \hookrightarrow \R^3$ is the (open) trefoil knot, 
it follows that $\pi_1(\R^3 \setminus \gamma(\R))$ is the braid group
with $3$ strands and thus $\pi_1(p^{-1}(U) \setminus \tau(\R))$ is not a free 
group. It remains then to see that the group homomorphism
\[
	\iota\colon \pi_1(p^{-1}(U) \setminus \tau(\R)) \to \pi_1(\SL_2(\R) 
	\setminus \tau(\R)) 
\]
induced by the inclusion 
$p^{-1}(U) \setminus \tau(\R)\hookrightarrow \SL_2(\R)\setminus \tau(\R)$ 
is injective (as a subgroup of a free group is free).
Every element $\alpha\in \mathrm{Ker}(\iota)$ lies in the kernel of the map 
$\iota'\colon\pi_1(p^{-1}(U)\setminus \tau(\R))\to \pi_1(\R^3 \setminus \gamma(\R))$
induced by the composition 
\[
	p^{-1}(U) \setminus \tau(\R) \hookrightarrow \SL_2(\R) \setminus \tau(\R)
	\stackrel{g^{-1}}{\longrightarrow} 
	((\R^2 \setminus \{ (0,0) \}) \times \R) \setminus \gamma(\R)
	\hookrightarrow \R^3 \setminus \gamma(\R),
\]
which corresponds simply to the composition
\begin{equation}
	\label{Eq:InclusHom}
	\tag{$\star\star$}
	p^{-1}(U) \setminus \tau(\R) 
	\stackrel{g^{-1}}{\longrightarrow} 
	\left(U \times \R\right) \setminus \gamma(\R)
	\hookrightarrow \R^3 \setminus \gamma(\R).
\end{equation}

Since $h\colon U\to \R^2$ is homotopic to the inclusion $U\hookrightarrow \R^2$
via a homotopy that fixes $\Gamma(\R)$ pointwise, the two compositions $p^{-1}(U) \setminus \tau(\R)\to \R^3 \setminus \gamma(\R)$ of \eqref{Eq:Homeom} and \eqref{Eq:InclusHom} are homotopic, so $\iota'$ is an isomorphism. This implies that $\iota$ is injective and achieves the proof.
\end{proof}

\begin{question}
	Working over the field of complex numbers $\C$, is the algebraic embedding 
	$\tau\colon \A^1\to \SL_2$ of 
	Lemma~$\ref{Exam:RSL2R}\eqref{ClosedEmb}$ 
	equivalent to the standard embedding 
	$\tau_1 \colon \A^1 \hookrightarrow \SL_2$?
\end{question}

\end{document}